\newtheorem{theorem}{Theorem}
\newtheorem{corollary}{Corollary}
\newtheorem{lemma}{Lemma}
\newtheorem{proposition}[lemma]{Proposition}
\newtheorem{conjecture}{Conjecture}
\newtheorem{theoremext}{Theorem}
\theoremstyle{definition}
\newtheorem{definition}[lemma]{Definition}
\newtheorem{example}[lemma]{Example}
\newtheorem{remark}[lemma]{Remark}
\newtheorem{notation}[lemma]{Notation}
\definecolor{colLinkBlue}{RGB}{23,111,192} 
\definecolor{colCiteGreen}{RGB}{8,144,8} 
\definecolor{colP}{RGB}{170,0,255} 
\definecolor{colLP}{RGB}{255,170,255} 
\definecolor{colBrown}{RGB}{156,99,49} 
\definecolor{colGray}{RGB}{128,128,128} 
\definecolor{colGreen}{RGB}{0,204,0} 
\definecolor{colO}{RGB}{255,170,0} 
\definecolor{colLB}{RGB}{143,189,211} 
\makeatletter\patchcmd{\ttlh@hang}{\parindent\z@}{\parindent\z@\leavevmode}{}{}\patchcmd{\ttlh@hang}{\noindent}{}{}{}\makeatother 
\titlespacing*{\section}{0pt}{0mm}{0mm}
\titlespacing*{\subsection}{0pt}{0mm}{0mm}
\titlespacing*{\paragraph}{0pt}{0mm}{0mm}
\newcommand{\myspace}{\setlength{\abovedisplayskip}{1mm}\setlength{\belowdisplayskip}{0mm}}
\newenvironment{Mlist}{\begin{itemize}[topsep=0pt,itemsep=0pt,leftmargin=7mm]}{\end{itemize}}
\newenvironment{claims}{\begin{enumerate}[topsep=0pt,itemsep=0pt,leftmargin=8mm,label=\textnormal{\textbf{(\alph*)}},ref=(\alph*)]}{\end{enumerate}}
\def\centerarc[#1](#2)(#3:#4:#5:#6){\draw[#1] ($(#2)+({#5*cos(#3)},{#5*sin(#3)})$) arc (#3:#4:#5 and #6);}
\newcommand{\csep}[1]{\setlength{\tabcolsep}{#1}}
\newcommand{\fig}[3]{\includegraphics[height=#1cm, width=#2cm]{img/#3}}
\newcommand{\ASN}[1]{Assertion~\ref{#1}}
\newcommand{\EQN}[1]{(\ref{eqn:#1})}
\newcommand{\AXM}[1]{Axiom~\ref{#1}}
\newcommand{\AXMS}[2]{Axioms~\ref{#1} and~\ref{#2}}
\newcommand{\SEC}[1]{\textsection\ref{sec:#1}}
\newcommand{\END}{\hfill $\vartriangleleft$}
\newcommand{\RL}[2]{\Cref{lem:#1}\ref{lem:#1:#2}}
\newcommand{\RLS}[4]{Lemmas~\ref{lem:#1}\ref{lem:#1:#2} and~\ref{lem:#3}\ref{lem:#3:#4}}
\newcommand{\df}[1]{{\it #1}}
\newcommand{\Wlog}{without loss of generality }
\newcommand{\resp}{respectively}
\newcommand{\st}{such that }
\newcommand{\wrt}{with respect to }
\renewcommand{\c}{\colon}
\newcommand{\set}[2]{\{#1:#2\}}
\newcommand{\dto}{\dasharrow}
\newcommand{\bas}[1]{\langle #1\rangle}
\newcommand{\aut}{\operatorname{Aut}}
\newcommand{\Sing}{\operatorname{Sing}}
\newcommand{\id}{\operatorname{id}}
\newcommand{\radius}{\operatorname{radius}}
\newcommand{\plane}{\operatorname{plane}}
\newcommand{\cent}{\operatorname{center}}
\newcommand{\VisPoints}{\operatorname{VisPoints}}
\renewcommand{\P}{{\mathbb{P}}}
\renewcommand{\S}{{\mathbb{S}}}
\newcommand{\C}{{\mathbb{C}}}
\newcommand{\R}{{\mathbb{R}}}
\newcommand{\Z}{{\mathbb{Z}}}
\newcommand{\Q}{{\mathbb{Q}}}
\newcommand{\U}{{\mathbb{U}}}
\newcommand{\E}{{\mathbb{E}}}
\newcommand{\bA}{{\mathbf{A}}}
\newcommand{\bO}{{\mathbf{O}}}
\newcommand{\bU}{{\mathbf{U}}}
\newcommand{\bS}{{\mathbf{S}}}
\newcommand{\cV}{{\mathcal{V}}}
\newcommand{\cW}{{\mathcal{W}}}
\newcommand{\cF}{{\mathcal{F}}}
\newcommand{\cG}{{\mathcal{G}}}
\newcommand{\cE}{{\mathcal{E}}}
\newcommand{\cX}{\mathcal{X}}
\newcommand{\cZ}{\mathcal{Z}}
\newcommand{\cU}{\mathcal{U}}
\newcommand{\cH}{\mathcal{H}}
\renewcommand{\k}{\kappa}
\renewcommand{\l}{\ell}
\newcommand{\p}{\varepsilon}
\newcommand{\tb}{\tilde{b}}
\newcommand{\bp}{b'}
\newcommand{\pr}{\pi}
\newcommand{\pp}{\mathfrak{p}}
\newcommand{\op}{{\overline{\mathfrak{p}}}}
\newcommand{\qq}{\mathfrak{q}}
\newcommand{\oq}{{\overline{\mathfrak{q}}}}
\renewcommand{\aa}{\mathfrak{a}}
\newcommand{\oa}{{\overline{\mathfrak{a}}}}
\newcommand{\bb}{\mathfrak{b}}
\newcommand{\ob}{{\overline{\mathfrak{b}}}}
\newcommand{\vv}{\mathfrak{v}}
\newcommand{\rr}{\mathfrak{r}}
\newcommand{\rs}{\mathfrak{s}}
\newcommand{\rt}{\mathfrak{t}}
\newcommand{\oL}{\overline{L}}
\newcommand{\oR}{\overline{R}}
\newcommand{\SingType}{\operatorname{SingType}}
\newcommand{\VisType}{\operatorname{VisType}}
\newcommand{\PseudoType}{\operatorname{PseudoType}}
\newcommand{\Piii}{\hyperref[tab:P]{\operatorname{P3}}}
\newcommand{\Piv}{\hyperref[tab:P]{\operatorname{P4}}}
\newcommand{\Pv}{\hyperref[tab:P]{\operatorname{P5}}}
\newcommand{\Di}{\hyperref[tab:s]{\operatorname{D1}}}
\newcommand{\Dii}{\hyperref[tab:s]{\operatorname{D2}}}
\newcommand{\Diii}{\hyperref[tab:s]{\operatorname{D3}}}
\newcommand{\Div}{\hyperref[tab:s]{\operatorname{D4}}}
\newcommand{\Dv}{\hyperref[tab:s]{\operatorname{D5}}}
\newcommand{\Bi}{\hyperref[tab:A4]{\operatorname{B1}}}
\newcommand{\Bii}{\hyperref[tab:A4]{\operatorname{B2}}}
\newcommand{\Biii}{\hyperref[tab:A4]{\operatorname{B3}}}
\newcommand{\Ei}{\hyperref[tab:E]{\operatorname{E1}}}
\newcommand{\Eii}{\hyperref[tab:E]{\operatorname{E2}}}
\newcommand{\Dia}{\hyperref[tab:D123]{\operatorname{D1a}}}
\newcommand{\Diia}{\hyperref[tab:D123]{\operatorname{D2a}}}
\newcommand{\Dva}{\hyperref[tab:D45]{\operatorname{D5a}}}
\newcommand{\Dvb}{\hyperref[tab:D45]{\operatorname{D5b}}}
\newcommand{\Dvc}{\hyperref[tab:D45]{\operatorname{D5c}}}
\newcommand{\DI}{
\begin{tikzpicture}[scale=0.45]
\draw[thick, colBrown] (0,0) to [out=180, in=90] (-2,-2)
                              to [out=-90, in=175] (-1,-3.5)
                              to [out=5, in=270] (-1.5,-1.5)
                              to [out=90, in=180] (0,0);
\draw[thick, colBrown] (0,0) to [out=0, in=90] (2,-2)
                              to [out=-90, in=5] (1,-3.5)
                              to [out=175, in=270] (1.5,-1.5)
                              to [out=90, in=0] (0,0);
\draw[thick, red] (-2,0) -- (2,0);
\draw[thick, red] (0,2)  -- (0,-2);
\draw[thick, red] (-1.2,-1.2) -- (1.5,1.5);
\draw[thick, red] (1.2,-1.2) -- (-1.5,1.5);
\draw[draw=black, fill=colO] (0,0) circle [radius=0.2] node[black] {};
\draw[draw=white, fill=white] (0,-4) circle [radius=0.2] node[white] {};
\end{tikzpicture}}
\newcommand{\DII}{
\begin{tikzpicture}[scale=0.45]
\draw[thick, colGray] (0,0) to [out=180, in=90] (-2,-2)
                              to [out=-90, in=180] (0,-4)
                              to [out=0, in=-90] (2,-2)
                              to [out=90, in=0] (0,0);
\draw[thick, colGreen] (0,0) to [out=180, in=180] (0,-4)
                                        to [out=0, in=0] (0,0);
\draw[thick, red] (-2,0) -- (2,0);
\draw[thick, red] (0,2)  -- (0,-2);
\draw[thick, red] (-0.8,-0.8) -- ( 1.5,1.5);
\draw[thick, red] ( 0.8,-0.8) -- (-1.5,1.5);
\draw[draw=black, fill=colO] (0,0) circle [radius=0.2] node[black] {};
\draw[draw=black, fill=colO] (0,-4) circle [radius=0.2] node[black] {};
\end{tikzpicture}}
\newcommand{\DIII}{
\begin{tikzpicture}[scale=0.35]
\draw[thick, red] (-3.5, 2) -- ( 3.5, 2);
\draw[thick, red] (-3.5,-2) -- ( 3.5,-2);
\draw[thick, red] (-2, 4) -- (-2,-4);
\draw[thick, red] ( 2, 4) -- ( 2,-4);
\draw[thick, blue] (-2,2) to [out=45, in=180] (0,4)
                               to [out=0, in=135] (2,2)
                               to [out=-45, in=90] (4,0)
                               to [out=270, in=45] (2,-2)
                               to [out=225, in=0] (0,-4)
                               to [out=180, in=-45] (-2,-2)
                               to [out=135, in=270] (-4,0)
                               to [out=90, in=225] (-2,2);
\draw[draw=black, fill=colO] (-2, 2) circle [radius=0.27] node[black] {};
\draw[draw=black, fill=colO] ( 2, 2) circle [radius=0.27] node[black] {};
\draw[draw=black, fill=colO] (-2,-2) circle [radius=0.27] node[black] {};
\draw[draw=black, fill=colO] ( 2,-2) circle [radius=0.27] node[black] {};
\end{tikzpicture}}
\newcommand{\DIV}{
\begin{tikzpicture}[scale=0.35]
\draw[thick, red] (-3.5, 2) -- ( 3.5, 2);
\draw[thick, red] (-3.5,-2) -- ( 3.5,-2);
\draw[thick, red] (-2, 4) -- (-2,-4);
\draw[thick, red] ( 2, 4) -- ( 2,-4);
\draw[thick, colGreen] (-2,2) to [out=-5, in=135] (0,1)
                                         to [out=-45, in=135] (1,0)
                                         to [out=-45, in=95] (2,-2);
\draw[thick, colGreen] (2,-2) to [out=175, in=-45] (0.1,-1.1);
\draw[thick, colGreen] (-0.1,-0.9) to [out=135, in=-45] (-0.9,-0.1 );
\draw[thick, colGreen] (-1.1,0.1) to [out=135, in=275] (-2,2);
\draw[thick, colGray] (-2,-2) to [out=85, in=225] (-1,0)
                                to [out=225, in=225] (0,1)
                                to [out=45, in=185] (2,2);
\draw[thick, colGray] (-2,-2) to [out=5, in=225] (0,-1)
                                to [out=225, in=225] (0.9,-0.1);
\draw[thick, colGray] (1.1,0.1) to [out=45, in=265] (2,2);
\draw[draw=black, fill=colO] (-2, 2) circle [radius=0.27] node[black] {};
\draw[draw=black, fill=colO] ( 2, 2) circle [radius=0.27] node[black] {};
\draw[draw=black, fill=colO] (-2,-2) circle [radius=0.27] node[black] {};
\draw[draw=black, fill=colO] ( 2,-2) circle [radius=0.27] node[black] {};
\draw[draw=black, fill=colO] (0,1  ) circle [radius=0.27] node[black] {};
\end{tikzpicture}}
\newcommand{\DV}{
\begin{tikzpicture}[scale=0.35]
\draw[thick, red] (-3.5, 2) -- ( 3.5, 2);
\draw[thick, red] (-3.5,-2) -- ( 3.5,-2);
\draw[thick, red] (-2, 4) -- (-2,-4);
\draw[thick, red] ( 2, 4) -- ( 2,-4);
\draw[thick, colGreen] (0,2) to [out=180, in=180] (0,-2)
                              to [out=0, in=0] (0,2);
\draw[draw=black, fill=colO] (-2, 2) circle [radius=0.27];
\draw[draw=black, fill=colO] ( 2, 2) circle [radius=0.27];
\draw[draw=black, fill=colO] (-2,-2) circle [radius=0.27];
\draw[draw=black, fill=colO] ( 2,-2) circle [radius=0.27];
\draw[draw=black, fill=colO] (0, 2) circle [radius=0.27];
\draw[draw=black, fill=colO] (0,-2) circle [radius=0.27];
\end{tikzpicture}}
\newcommand{\DIa}{
\begin{tikzpicture}[xscale=0.9,yscale=1.4]
\centerarc[thick](0,0)(0:360:2:1)
\centerarc[thick](0,0)(0:360:1:0.4)
\draw[]               (-1,0) arc (0:180:0.5  and 0.2);
\draw[densely dotted] (-1,0) arc (0:-180:0.5 and 0.2);
\draw[]               (2,0) arc (0:180:0.5   and 0.2);
\draw[densely dotted] (2,0) arc (0:-180:0.5  and 0.2);
\draw[very thick,red] (0,-1) to (0,-0.4);
\draw[very thick,red] (0,1) to (0,0.4);
\end{tikzpicture}}
\newcommand{\DIb}{
\begin{tikzpicture}[xscale=0.75,yscale=1.5]
\centerarc[thick](0,0)(0:360:2:1)
\centerarc[thick](0,0)(0:360:1:0.6)
\draw[densely dotted] (-1,0) arc (0:180:0.5  and 0.1);
\draw[              ] (-1,0) arc (0:-180:0.5 and 0.1);
\draw[densely dotted] (2,0) arc (0:180:0.5   and 0.1);
\draw[              ] (2,0) arc (0:-180:0.5  and 0.1);
\draw[very thick,red] (-0.2, 0.6) to (0.2, 1);
\draw[very thick,red] (-0.2,-1) to (0.2,-0.6);
\end{tikzpicture}}
\newcommand{\DIIa}{
\begin{tikzpicture}[xscale=0.9,yscale=0.7]
\draw[thick] (0, 2)
             to [out=0,in=100] (2, 0)
             to [out=-80,in=0] (0.5, -2)
             to [out=180,in=-90] (-2,0.5)
             to [out=90,in=180] (0,2);
\draw[thick] (-1, 1)
             to [out=90,in=170] (0.7,1.87)
             to [out=-120,in=0]  (-1,1);
\draw[thick,densely dotted]         ( -1,   1-2.5)
             to [out=90,in=170] (0.7,1.87-2.5)
             to [out=-120,in=0]  ( -1,   1-2.5);
\draw[thick]                      (-1, 1)
             to [out=-120,in=130] (-1,-1.5)
             to [out=70,in=-60]   (-1,1);
\draw[thick,densely dotted]               (0.7,1.87)
             to [out=-120,in=130] (0.7,-1.5+0.87)
             to [out=70,in=-60]   (0.7,1.87);
\draw[thick,densely dotted,red] (0.7,1.87) to (-1,-1.5);
\draw[thick,densely dotted,red] (-1,1) to (0.7,1.87-2.5);
\end{tikzpicture}}
\newcommand{\DIIb}{
\begin{tikzpicture}[xscale=0.7,yscale=0.7]
\draw[thick] (0, 2)
             to [out=0,in=100] (2, 0)
             to [out=-80,in=0] (0.5, -2)
             to [out=180,in=-90] (-2,0.5)
             to [out=90,in=180] (0,2);
\draw[thick] (-1, 1)
             to [out=90,in=170] (0.7,1.87)
             to [out=-120,in=0]  (-1,1);
\draw[thick,densely dotted]         ( -1,   1-2.5)
             to [out=90,in=170] (0.7,1.87-2.5)
             to [out=-120,in=0]  ( -1,   1-2.5);
\draw[thick]                      (-1, 1)
             to [out=-120,in=130] (-1,-1.5)
             to [out=70,in=-60]   (-1,1);
\draw[thick,densely dotted]               (0.7,1.87)
             to [out=-120,in=130] (0.7,-1.5+0.87)
             to [out=70,in=-60]   (0.7,1.87);
\draw[thick,densely dotted,red] (0.7,1.87) to (-1,-1.5);
\draw[thick,densely dotted,red] (-1,1) to (0.7,1.87-2.5);
\end{tikzpicture}}
\newcommand{\DIIIa}{
\begin{tikzpicture}[xscale=0.5,yscale=0.5]
\draw (-4,0) to [out=90,in=90] (-3,0) to [out=90,in=90] (-2,0);
\draw[dotted] (-4,0) to [out=270,in=270] (-3,0) to [out=270,in=270] (-2,0);
\draw[thick] (-2,0) to [out=90, in=180] (0,1);
\draw[thick,red] (-3,0) to [out=90, in=180] (0,2);
\draw[thick] (-4,0) to [out=90, in=180] (0,3);
\draw[thick] (-2,0) to [out=270, in=180] (0,-1) to [out=0, in=190] (0.95,-0.9);
\draw[dotted] (0.95,-0.9) to [out=10, in=270] (2,0);
\draw[thick] (-4,0) to [out=270, in=180] (0,-3);
\draw[thick] (0,1) to [out=0, in=90] (2,0);
\draw[thick] (0,2) to [out=0, in=90] (3,0);
\draw[dotted] (0,-2) to [out=0,in=190] (1.6,-1.8);
\draw[thick] (1.65,-1.8) to [out=15,in=270] (3,0);
\draw[dotted] (0,2) to [out=0,in=100] (0.9,0.9);
\draw[thick] (0.9,0.9) to [out=280,in=90] (1,0) to [out=270,in=0] (0,-2);
\draw[thick] (0,3) to [out=0,in=100] (1.8,1.65);
\draw[dotted] (1.8,1.65) to [out=280,in=90] (2,0);
\draw[thick] (2,0) to [out=270,in=0] (0,-3);
\draw[thick,red] (-3,0) to [out=270, in=180] (0,-2);
\draw[thick,red,densely dotted] (0,2) to [out=0,in=90] (2,0) to [out=270, in=0] (0,-2);
\end{tikzpicture}}
\newcommand{\DIIIb}{
\begin{tikzpicture}[xscale=0.9,yscale=1.4]
\centerarc[thick](0,0)(0:360:2:1)
\centerarc[thick](0,0)(0:360:1:0.4)
\draw[              ] (-1,0) arc (0: 180:0.5 and 0.2);
\draw[densely dotted] (-1,0) arc (0:-180:0.5 and 0.2);
\draw[              ] (2 ,0) arc (0: 180:0.5 and 0.2);
\draw[densely dotted] (2 ,0) arc (0:-180:0.5 and 0.2);
\draw[very thick,red] (0,-1) to (0,-0.4);
\draw[very thick,red] (0,1) to (0,0.4);
\end{tikzpicture}}
\newcommand{\DIIIc}{
\begin{tikzpicture}[xscale=-0.9,yscale=1.4]
\centerarc[thick](0,0)(0:360:2:1)
\centerarc[thick,densely dotted](0,-0.455)(49:79:1:0.4)
\centerarc[thick](0,0)(180:79:1:0.4)
\centerarc[thick](0,0)(-180:-41:1:0.4)
\draw[thick]        (1,0) arc (0:232:0.5 and 0.5);
\draw[thick,densely dotted] (1,0) arc (360:232:0.5 and 0.5);
\draw[densely dotted] (0.75,0.26) arc (60:180:0.5 and 0.3);
\draw[]        (0.75,-0.26) arc (-60:-180:0.5 and 0.3);
\draw[thick,densely dotted,red] (1,0) arc (0:60:0.5 and 0.3);
\draw[thick,red]        (1,0) arc (0:-60:0.5 and 0.3);
\draw[]        (-1,0) arc (0:180:0.5 and 0.5);
\draw[densely dotted] (-1,0) arc (0:-180:0.5 and 0.5);
\end{tikzpicture}}
\newcommand{\DIIId}{
\begin{tikzpicture}[xscale=0.9,yscale=1.4]
\centerarc[thick](0,0)(0:360:2:1)
\centerarc[thick](0,0)(-190:10:1:0.4)
\centerarc[thick](0,-0.4)(160:20:0.8:0.4)
\end{tikzpicture}}
\newcommand{\DIIIe}{
\begin{tikzpicture}[xscale=-1.2,yscale=1.3]
\draw[thick] (-2,0) to [out=90,in=90] (1,0) to [out=270, in=270] (-2,0);
\centerarc[thick,densely dotted](0,-0.455)(49:79:1:0.4)
\centerarc[thick](0,0)(180:79:1:0.4)
\centerarc[thick](0,0)(-180:-41:1:0.4)
\draw[thick]        (1,0) arc (0:232:0.5 and 0.5);
\draw[thick,densely dotted] (1,0) arc (360:232:0.5 and 0.5);
\draw[dotted] (0.75,0.26) arc (60:180:0.5 and 0.3);
\draw[]        (0.75,-0.26) arc (-60:-180:0.5 and 0.3);
\draw[thick,densely dotted,red] (1,0) arc (0:60:0.5 and 0.3);
\draw[thick,red]        (1,0) arc (0:-60:0.5 and 0.3);
\draw[]        (-1,0) arc (0:180:0.5 and 0.5);
\draw[densely dotted] (-1,0) arc (0:-180:0.5 and 0.5);
\end{tikzpicture}}
\newcommand{\DIVa}{
\begin{tikzpicture}[xscale=0.9,yscale=0.9]
\centerarc[fill=white,thick](0,0)(0:360:1.5:1)
\centerarc[fill=white,thick](1,0)(0:360:1.5:1.5)
\centerarc[line width=0.5mm,white,densely dotted](1,0)(180:221:1.5:1.5)
\centerarc[thick,densely dotted](0,0)(0:95:1.5:1)
\centerarc[red,very thick](0,0)(0:-180:0.5:0.2)
\centerarc[red,densely dotted,thick](0,0)(0:180:0.5:0.2)
\centerarc[red,thick](1,0)(0:180:0.5:0.5)
\centerarc[densely dotted,red,thick](1,0)(180:360:0.5:0.5)
\centerarc[thick](0,0.5)(-95:0:1.5:1)
\end{tikzpicture}}
\newcommand{\DIVb}{
\begin{tikzpicture}[xscale=0.9,yscale=0.7]
\draw[thick] (0, 2)
             to [out=0,in=100] (2, 0)
             to [out=-80,in=0] (0.5, -2)
             to [out=180,in=-90] (-2,0.5)
             to [out=90,in=180] (0,2);
\draw[thick] (-1, 1)
             to [out=90,in=170] (0.7,1.87)
             to [out=-120,in=0]  (-1,1);
\draw[thick,densely dotted]         ( -1,   1-2.5)
             to [out=90,in=170] (0.7,1.87-2.5)
             to [out=-120,in=0]  ( -1,   1-2.5);
\draw[thick]                      (-1, 1)
             to [out=-120,in=130] (-1,-1.5)
             to [out=70,in=-60]   (-1,1);
\draw[thick,densely dotted]               (0.7,1.87)
             to [out=-120,in=130] (0.7,-1.5+0.87)
             to [out=70,in=-60]   (0.7,1.87);
\draw[thick,densely dotted,red] (0.7,1.87) to (-1,-1.5);
\draw[thick,densely dotted,red] (-1,1) to (0.7,1.87-2.5);
\end{tikzpicture}}
\newcommand{\DIVc}{
\begin{tikzpicture}[xscale=0.9,yscale=1.1]
\centerarc[thick](0,0)(0:360:2:1)
\centerarc[](-1.5,0)(0:180:0.5:0.5)
\centerarc[densely dotted](-1.5,0)(180:360:0.5:0.5)
\centerarc[densely dotted,red,thick](0,0)(0:180:1:0.4)
\centerarc[red,thick](0,0)(180:360:1:0.4)
\centerarc[red,thick](1.5,0)(110:180:0.5:0.5)
\centerarc[densely dotted,red,thick](1.5,0)(180:250:0.5:0.5)
\centerarc[line width=0.5mm,white,densely dotted](0,-0.78)(51:100:2:1)
\draw[thick] (-1,0) to [out=90,in=180] (0.5,1.2) to [out=0,in=90] (1.34,0.48);
\draw[thick,densely dotted] (-1,0) to [out=270,in=150] (-0.3,-1);
\draw[thick] (-0.3,-1) to [out=-30,in=270] (1.34,-0.48);
\end{tikzpicture}}
\newcommand{\DIVd}{
\begin{tikzpicture}[xscale=1.8,yscale=1.1]
\draw[very thick] (-0.5,0) to [out=90,in=180] (0.5,1.3) to [out=0, in=90] (1.5,0);
\draw[very thick] (-0.5,0) to [out=-90,in=180] (0.5,-1.3) to [out=0, in=-90] (1.5,0);
\draw[thick,black,densely dotted] (1.5,0) to [out=80,in=90] (-0.5,0);
\centerarc[red,very thick](1,0)(0:180:0.5:0.5)
\centerarc[densely dotted,red,very thick](1,0)(180:360:0.5:0.5)
\centerarc[thick,red](0,0)(0:-180:0.5:0.2)
\centerarc[thick,red,densely dotted](0,0)(0:180:0.5:0.2)
\draw[thick,black] (1.5,0) to [out=-80,in=-90] (-0.5,0);
\end{tikzpicture}}
\newcommand{\DVa}{
\begin{tikzpicture}[xscale=0.5,yscale=0.5]
\draw (-4,0) to [out=90,in=90] (-3,0) to [out=90,in=90] (-2,0);
\draw[dotted] (-4,0) to [out=270,in=270] (-3,0) to [out=270,in=270] (-2,0);
\draw[thick] (-2,0) to [out=90, in=180] (0,1);
\draw[thick,red] (-3,0) to [out=90, in=180] (0,2);
\draw[thick] (-4,0) to [out=90, in=180] (0,3);
\draw[thick] (-2,0) to [out=270, in=180] (0,-1) to [out=0, in=190] (0.95,-0.9);
\draw[dotted] (0.95,-0.9) to [out=10, in=270] (2,0);
\draw[thick] (-4,0) to [out=270, in=180] (0,-3);
\draw[thick] (0,1) to [out=0, in=90] (2,0);
\draw[thick] (0,2) to [out=0, in=90] (3,0);
\draw[dotted] (0,-2) to [out=0,in=190] (1.6,-1.8);
\draw[thick] (1.65,-1.8) to [out=15,in=270] (3,0);
\draw[dotted] (0,2) to [out=0,in=100] (0.9,0.9);
\draw[thick] (0.9,0.9) to [out=280,in=90] (1,0) to [out=270,in=0] (0,-2);
\draw[thick] (0,3) to [out=0,in=100] (1.8,1.65);
\draw[dotted] (1.8,1.65) to [out=280,in=90] (2,0);
\draw[thick] (2,0) to [out=270,in=0] (0,-3);
\draw[thick,red] (-3,0) to [out=270, in=180] (0,-2);
\draw[thick,red,densely dotted] (0,2) to [out=0,in=90] (2,0) to [out=270, in=0] (0,-2);
\end{tikzpicture}}
\newcommand{\DVb}{
\begin{tikzpicture}[xscale=0.9,yscale=1.4]
\centerarc[thick,red](-0.5,0)(0:180:1.5:0.8)
\centerarc[densely dotted,thick,red](-0.5,0)(180:360:1.5:0.8)
\centerarc[thick](0,0)(0:360:2:1)
\centerarc[thick](0,0)(-190:10:1:0.4)
\centerarc[thick](0,-0.4)(160:20:0.8:0.4)
\end{tikzpicture}}
\newcommand{\DVc}{
\begin{tikzpicture}[xscale=0.9,yscale=1.4]
\centerarc[thick](0,0)(0:360:2:1)
\centerarc[thick](0,0)(-190:10:1:0.4)
\centerarc[thick](0,-0.4)(160:20:0.8:0.4)
\end{tikzpicture}}
\newcommand{\Bohemianquartet}{
\begin{tikzpicture}[scale=0.5]
\draw[blue] (-2,0)       -- (2,0)      node[right      ] {$R$};
\draw[blue] (0,2)        -- (0,-2)     node[below      ] {$\oR$};
\draw[red] (-1.2,-1.2)  -- (1.5,1.5)  node[above right] {$L$};
\draw[red] (1.2,-1.2)   -- (-1.5,1.5) node[above left ] {$\oL$};
\draw[draw=black, fill=green!20] (0,0) circle [radius=0.5] node[black] {$\vv$};
\end{tikzpicture}}
\newcommand{\Cliffordianquartet}{
\begin{tikzpicture}[scale=0.3]
\draw[blue] (-4, 2) -- ( 4, 2) node[right] {$R$};
\draw[blue] (-4,-2) -- ( 4,-2) node[right] {$\oR$};
\draw[red] (-2, 4) -- (-2,-4) node[below] {$L$};
\draw[red] ( 2, 4) -- ( 2,-4) node[below] {$\oL$};
\draw[draw=black, fill=green!20]    (-2, 2) circle [radius=0.85] node {$\pp$};
\draw[draw=black, fill=blue!10]     ( 2, 2) circle [radius=0.85] node {$\oq$};
\draw[draw=black, fill=orange!40]   (-2,-2) circle [radius=0.85] node {$\qq$};
\draw[draw=black, fill=magenta!10]  ( 2,-2) circle [radius=0.85] node {$\op$};
\end{tikzpicture}}
\newcommand{\PIII}{
\begin{tikzpicture}[scale=0.4]
\draw[thick, red] (-3.5, 2) -- ( 4.5, 2) node[right] {$R$};
\draw[thick, red] (-3.5,-2) -- ( 4.5,-2) node[right] {$\oR$};
\draw[thick, red] (-2, 4) -- (-2,-4) node[below] {$L$};
\draw[thick, red] ( 2, 4) -- ( 2,-4) node[below] {$\oL$};
\draw[thick, blue] (-2,2) to [out=45, in=180] (0,4) to [out=0, in=135] (2,2) to [out=-45, in=90] (4,0) to [out=270, in=45] (2,-2) to [out=225, in=0] (0,-4) to [out=180, in=-45] (-2,-2) to [out=135, in=270] (-4,0) to [out=90, in=225] (-2,2);
\draw[draw=black, fill=white] (-2, 2) circle [radius=0.27];
\draw[draw=black, fill=white] ( 2, 2) circle [radius=0.27];
\draw[draw=black, fill=white] (-2,-2) circle [radius=0.27];
\draw[draw=black, fill=white] ( 2,-2) circle [radius=0.27];
\node[blue] at (5,0) {$V$};
\end{tikzpicture}}
\newcommand{\PIV}{
\begin{tikzpicture}[scale=0.4]
\draw[thick, red] (-3.5, 2) -- ( 3.5, 2) node[right] {$R$};
\draw[thick, red] (-3.5,-2) -- ( 3.5,-2) node[right] {$\oR$};
\draw[thick, red] (-2, 4) -- (-2,-4) node[below] {$L$};
\draw[thick, red] ( 2, 4) -- ( 2,-4) node[below] {$\oL$};
\draw[thick, colGreen] (-2,2) to [out=-5, in=135] (0,1) to [out=-45, in=135] (1,0) to [out=-45, in=95] (2,-2);
\draw[thick, colGreen] (2,-2) to [out=175, in=-45] (0.1,-1.1);
\draw[thick, colGreen] (-0.1,-0.9) to [out=135, in=-45] (-0.9,-0.1 );
\draw[thick, colGreen] (-1.1,0.1) to [out=135, in=275] (-2,2);
\draw[thick, colGray] (-2,-2) to [out=85, in=225] (-1,0) to [out=225, in=225] (0,1) to [out=45, in=185] (2,2);
\draw[thick, colGray] (-2,-2) to [out=5, in=225] (0,-1) to [out=225, in=225] (0.9,-0.1);
\draw[thick, colGray] (1.1,0.1) to [out=45, in=265] (2,2);
\draw[draw=black, fill=white] (-2, 2) circle [radius=0.27];
\draw[draw=black, fill=white] ( 2, 2) circle [radius=0.27];
\draw[draw=black, fill=white] (-2,-2) circle [radius=0.27];
\draw[draw=black, fill=white] ( 2,-2) circle [radius=0.27];
\draw[draw=black, fill=white] ( 0, 1) circle [radius=0.27];
\node[colGray] at (-0.5,-3) {$V_2$};
\node[colGreen] at (-1,3) {$V_1$};
\end{tikzpicture}}
\newcommand{\PV}{
\begin{tikzpicture}[scale=0.4]
\draw[thick, red] (-3.5, 2) -- ( 3.5, 2) node[right] {$R$};
\draw[thick, red] (-3.5,-2) -- ( 3.5,-2) node[right] {$\oR$};
\draw[thick, red] (-2, 4) -- (-2,-4) node[below] {$L$};
\draw[thick, red] ( 2, 4) -- ( 2,-4) node[below] {$\oL$};
\draw[thick, colGreen] (0,2) to [out=180, in=180] (0,-2) to [out=0, in=0] (0,2);
\draw[draw=black, fill=white]  (-2, 2) circle [radius=0.27];
\draw[draw=black, fill=white]  ( 2, 2) circle [radius=0.27];
\draw[draw=black, fill=white]  (-2,-2) circle [radius=0.27];
\draw[draw=black, fill=white]  ( 2,-2) circle [radius=0.27];
\draw[draw=black, fill=white] ( 0, 2) circle [radius=0.27];
\draw[draw=black, fill=white] ( 0,-2) circle [radius=0.27];
\node[colGreen] at (0,0) {$C$};
\end{tikzpicture}}
\newcommand{\DIIIcLoop}{
\begin{tikzpicture}[scale=0.6]
\centerarc[thick,fill=white   ](0,-2)(-180:0:0.3:0.3);
\centerarc[thick,fill=black!10](0,-2)(180:0:0.3:0.3);
\centerarc[thick](-2.3,0)(0:360:0.3:0.3);
\centerarc[thick]( 2.3,0)(0:360:0.3:0.3);
\centerarc[thick](0,2.5)(0:360:0.3:0.3);
\centerarc[very thick, dotted](0,0)(0:360:2:2);
\centerarc[very thick](0,0)(-180:0:2:2);
\draw[fill=red,red] (-2,0)          circle [radius=0.1];
\draw[fill=red,red] (2,0)           circle [radius=0.1];
\draw[fill=red,red] (-0.29,-1.98)   circle [radius=0.1];
\draw[fill=red,red] (0.29,-1.98)    circle [radius=0.1];
\end{tikzpicture}}
\newif\ifhide
\renewcommand{\DI}{D}\renewcommand{\DII}{D}\renewcommand{\DIII}{D}\renewcommand{\DIV}{D}\renewcommand{\DV}{D}
\renewcommand{\DIa}{D}\renewcommand{\DIb}{D}\renewcommand{\DIIa}{D}\renewcommand{\DIIb}{D}
\renewcommand{\DIIIa}{D}\renewcommand{\DIIIb}{D}\renewcommand{\DIIIc}{D}\renewcommand{\DIIId}{D}\renewcommand{\DIIIe}{D}
\renewcommand{\DIVa}{D}\renewcommand{\DIVb}{D}\renewcommand{\DIVc}{D}\renewcommand{\DIVd}{D}
\renewcommand{\DVa}{D}\renewcommand{\DVb}{D}\renewcommand{\DVc}{D}
\renewcommand{\Bohemianquartet}{D}\renewcommand{\Cliffordianquartet}{D}
\renewcommand{\PIII}{D}\renewcommand{\PIV}{D}\renewcommand{\PV}{D}
\renewcommand{\DIIIcLoop}{D}
\renewenvironment{tikzpicture}{D\comment}{\endcomment}
\renewcommand{\fig}[3]{F}
\begin{document}
\myspace

\begin{center}
\LARGE
~\\[5mm]
Self-intersections of surfaces that contain two circles through each point
\\[5mm]\large
Niels Lubbes
\\[5mm]\large
\today
\end{center}

\begin{abstract}
We classify the singular loci of real surfaces in three-space that contain two circles through each point. We characterize how a circle in such a surface meets this loci as it moves in its pencil and as such provide insight into the topology of the surface.
\\[2mm]
{\bf Keywords:}
real surfaces,
weak del Pezzo surfaces,
projections of anticanonical models,
pencils of circles,
singular locus,
M\"obius geometry,
elliptic geometry,
Euclidean geometry,
Euclidean translations,
Clifford translations,
unit quaternions,
topology
\\[2mm]
{\bf MSC2010:}
51B10, 
51M15, 
14J17, 
14P25, 
14C20  
%
\end{abstract}

\section{Introduction}
\label{sec:intro}

\subsection{Prologue}
\label{sec:pro}

A surface in $\R^3$ that contains two lines through each point must
be a doubly ruled quadric and thus has no self-intersections.
In particular,
Sir Christopher Wren discovered in \citep[1669]{1669} that the one-sheeted hyperboloid
is doubly ruled and therefore could be used for grinding hyperbolic lenses.
Although a circle is perhaps the most elementary curve after the line,
the following problem is surprisingly difficult:

\textit{What are the possible self-intersections of a surface in $\R^3$
that contains two circles through almost each point?}

In \Cref{fig:intro1}, we see two examples of such self-intersections
together with diagrammatic representations of these loci.
By applying the main result of this article, we find that
the self-intersection loci is in both cases contained in two circles
of multiplicity two that meet at double point (see \Cref{thm:s}).
Moreover, almost each circle in the surface meets each double circle
in one point.

\begin{figure}[!ht]
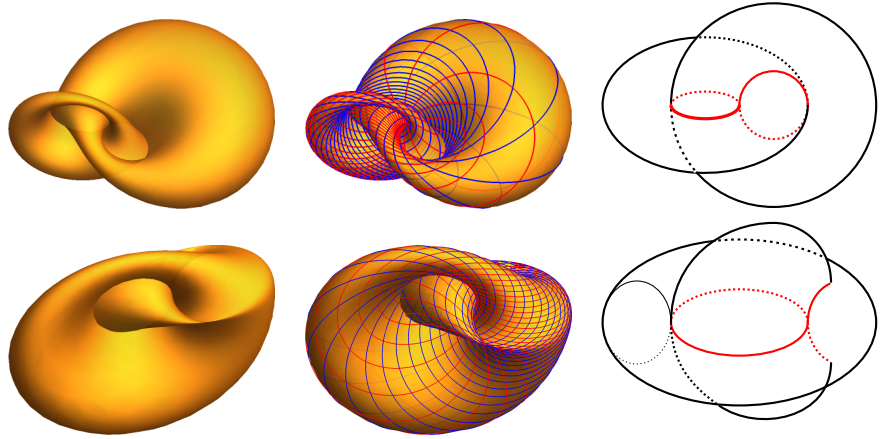

\centering
\csep{2mm}
\begin{tabular}{ccc}
\fig{2.5}{3.5}{D4a-1___0-0-0_1-0-0_1___0-0-0_0-1-0_1} &
\fig{2.5}{3.5}{D4a-2___0-0-0_1-0-0_1___0-0-0_0-1-0_1} &
\DIVa
\\
\fig{2.5}{3.5}{D4c-1___0-0-0_05-0-0_05___0-0-0_0-05-0_05} &
\fig{2.5}{3.5}{D4c-2___0-0-0_05-0-0_05___0-0-0_0-05-0_05} &
\DIVc
\end{tabular}
\caption{Surfaces that contain two circles through almost each point.}
\label{fig:intro1}
\end{figure}

In \Cref{fig:intro2}, we see another example where
the self-intersection locus is an arc that is
contained in a quartic rational curve.
The diagram illustrates how a circle meets this arc as it moves in its pencil.

\begin{figure}[!ht]
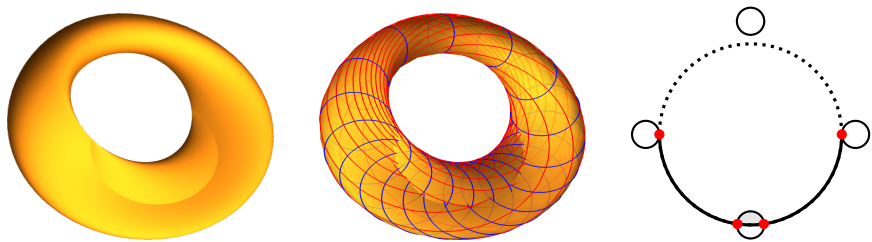

\centering
\csep{3mm}
\begin{tabular}{ccc}
\fig{3}{3.5}{D3c-intro1}  &
\fig{3}{3.5}{D3c-intro2}  &
\DIIIcLoop
\end{tabular}
\caption{The self-intersection locus consists of an arc.}
\label{fig:intro2}
\end{figure}

Both surfaces in \Cref{fig:intro3} are disjoint unions of red circles
and contain a double circle.
As a blue circle continuously moves in its pencil,
it either meets the double circle of the left surface in two points, or
the double circle of the right surface tangentially in one point.

\begin{figure}[!ht]
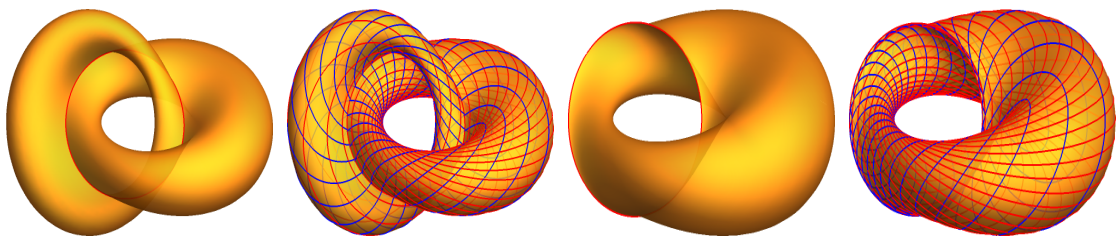

\centering
\csep{1mm}
\begin{tabular}{cccc}
\fig{3}{3.5}{D5a-1___0-0-0_0-0-0_1___0-0-0_15-0-0_1} &
\fig{3}{3.5}{D5a-2___0-0-0_0-0-0_1___0-0-0_15-0-0_1} &
\fig{3}{3.5}{D5b-1___0-0-0_0-0-0_1___0-0-0_2-0-0_1} &
\fig{3}{3.5}{D5b-2___0-0-0_0-0-0_1___0-0-0_2-0-0_1}
\end{tabular}
\caption{Both surfaces that are disjoint unions of red circles. A blue circle
meets the double circle in two points (left) or one point tangentially (right).}
\label{fig:intro3}
\end{figure}

We will show that the self-intersection locus of a surface that contains two circles
through each point is homeomorphic to the symbol
$=$, $-$, $+$, $\circ$, $\infty$, $\alpha$ or $\sigma$.
More examples with explicit parametrizations are provided at \Cref{tab:D,tab:D123,tab:D45}.
We discuss some related literature in \SEC{state}
and present our main results in~\SEC{main}.
In \SEC{overview}, we conclude this introduction with the overview
of the article.

\subsection{Historical and recent developments}
\label{sec:state}

Analytic surfaces that contain at least two circles through almost each point are called \df{celestial surfaces}.
Schicho shows in \citep[Theorem~1]{2001} that such surfaces must be algebraic and thus the zero set of
polynomials with real coefficients.

Celestial surfaces are of interest in
differential geometry \cite{1995,2000},
discrete differential geometry \cite{2012ort,2017sup},
geometric modeling~\cite{2012web},
kinematics~\cite{2018kin,2020kin},
computer vision~\cite{2023vision} and
architecture~\cite{2011arc}.

If $Z$ is a celestial surface in the unit-sphere~$S^3$,
then its degree is either 2, 4 or~8, and
contains at most~6 circles through a general point \cite{2021circle,1980,2000}.
If $Z$ has degree~4, then it is a \df{Darboux cyclide}
and admits at most isolated singularities \cite{2021circle}.
The investigation of such cyclides can be traced back
to Dupin~\citep[1822]{1822}.

It follows from the recent result \citep[Theorem~1.1]{2019sko}
by Skopenkov and Krasauskas that celestial surfaces of degree~8
are up to M\"obius equivalence and stereographic projection either
the pointwise sum of circles in $\R^3$
or
the pointwise product of circles in the unit-quaternions~$S^3$.
Geometrically such surfaces are the translation of a circle along another circle
with respect to the Euclidean or spherical metric.
The points in $\R^3$ or $S^3$ that during such a translation are reached more than one time
by the translated circle correspond to the self-intersection locus.
We remark that translations of $S^3$ are also known as \df{Clifford translations}
or \df{isoclinic rotations}.
Clifford translations are closely related to the factorization problem for
bivariate quaternionic polynomials (see \citep[Lemma~2.9]{2019sko}).
These results from \cite{2019sko} are,
together with Frischauf and Schr\"ocker, generalized in \cite{2024fact}.

From an algebro geometric point of view, the current article
investigates real birational morphisms from $\P^1\times\P^1$
into a hyperquadric in~$\P^4$ of signature~$(1,4)$
\st its components are of bidegree~$(2,2)$.
We classify such biquadratic maps in terms of the singular loci of their images.

A classical counterpart to this result is the classification of
images of birational morphisms from $\P^2$ into $\P^3$
\st the components of the maps are of degree~$2$.
Such an image is called a \df{Steiner surface} and its singular locus
consists of either one, two or three complex double lines that each pass through
a triple point (see \cite{1996}, \citep[Section~2.1.1]{2012dol} and \citep[VI.\textsection 46, page~303]{1990}).
In case all three double lines are real it is called a \df{Roman surface}
as Jacob Steiner discovered some of its properties during a stay at Rome in~$1844$.
Steiner surfaces admit pinch points
and thus, unlike \df{Boy's surfaces}, do not define an immersion of the real points of~$\P^2$ into $\R^3\subset \P^3$
(see \cite{1987}).

Recently, Koll\'ar provides in \citep[Theorem~3]{2018kol} a birational classification
of varieties whose points correspond to
morphisms from~$\P^2$ into a 4-dimensional hyperquadric whose components have degree~$2$.
As an application, \citep[Theorem~23]{2018kol} states that
a surface in a unit-sphere~$S^n$ that contains a circle through any two points,
is either~$S^2$ or a quartic Veronese surface in $S^4$ that is not contained in a hyperplane.

The classification of complex surfaces in $\P^3$ that are singular along curves
is completed for cubic surfaces by Bruce and Wall~\cite{1979},
and for quartic surfaces by Urabe~\cite{1986}.
Piene provides in~\cite{2005} upper bounds for the degree of the singular loci
of rational surfaces in~$\P^3$.

From a topological point of view, we consider continuous maps $S^1\times S^1\to S^3$
\st the parameter lines are circles.
Their images may have pinch points and thus are not immersions.
In case the image is covered by great circles, then its self-intersection locus
and topological type is classified in \citep[Theorem~I and Proposition~25]{2024great}.
The classification of immersions of compact surfaces into $\R^3$ up to regular homotopy
is done by Pinkall in~\citep[Theorem~4]{1985}.

\subsection{Presentation of the main results}
\label{sec:main}

In order to investigate the self-intersection loci of surfaces,
we investigate curves at complex infinity.
To uncover these hidden curves we
define a \df{real variety} $X$ to be a complex irreducible variety together with
an antiholomorphic involution $\sigma_X\c X\to X$
called the \df{real structure} of $X$.
We denote its real points by
\[
X_\R:=\set{p\in X}{\sigma_X(p)=p}.
\]
Such varieties can always be defined by polynomials with real coefficients.

\begin{notation}
\label{ntn:real}
In what follows, points, curves, surfaces and projective spaces $\P^n$ are real algebraic varieties
and maps between such varieties
are compatible with their real structures
unless explicitly stated otherwise.
Conics are real and reduced by default, but may be reducible.
By default, we assume that the real structure $\sigma_{\P^n}\c\P^n\to\P^n$ sends $x$
to $(\overline{x_0}:\ldots:\overline{x_n})$,
where $\overline{\,\cdot\,}$ denotes the complex conjugate.
\END
\end{notation}

The \df{M\"obius quadric} is defined as
\[
\S^3:=\set{x\in\P^4}{-x_0^2+x_1^2+x_2^2+x_3^2+x_4^2=0}.
\]
A \df{circle}~$C\subset\S^3$ is an irreducible conic \st $C_\R\neq\varnothing$.
A \df{Viviani curve}~$C\subset\S^3$ is a quartic rational curve with one singular point.
A \df{twisted quartic}~$C\subset\S^3$ is a quartic rational normal curve.

We call a surface $X\subset\S^3$ \df{$\lambda$-circled}
if it contains at least $\lambda\in \Z_{\geq 0}\cup\{\infty\}$ circles through a general point.
If $\lambda\in\Z_{\geq0}$, then we assume that $X$ is not $(\lambda+1)$-circled.
If $\lambda\geq 2$, then we call $X$ \df{celestial}.

A \df{singular component} of a surface $X\subset\P^n$
is a complex irreducible component
of the \df{singular locus}~$\Sing X$.
We call a 1-dimensional singular component a (complex) \df{double curve} if a general complex point
in this (complex) curve has multiplicity two.
In this article, $|\cdot|$ always denotes the set-theoretic cardinality.

A complex curve $W\subset \P^1\times\P^1$ has \df{bidegree}~$(\alpha,\beta)$
if
almost all fibers of the projection of $\P^1\times\P^1$ to its second $\P^1$ component
intersects $W$ in $\alpha\in \Z_{\geq 0}$ complex points
and
almost all fibers of the projection of $\P^1\times\P^1$ to its first $\P^1$ component
intersects $W$ in $\beta\in \Z_{\geq 0}$ complex points.
Thus, $W$ can be defined as the zero set of an irreducible polynomial in $\C[x_0,x_1,y_0,y_1]$ that is
homogeneous and of degree~$\alpha$ in the variables $x_0,x_1$ and
homogeneous and of degree~$\beta$ in the variables $y_0,y_1$.

\begin{definition}
\label{def:class}
Suppose that $X\subset\S^3$ is a surface and $\varphi\c \P^1\times\P^1\to X$
a birational morphism. Let $N(X):=\bas{\l_0,\l_1}_\Z$ be an additive group
together with the unimodular intersection product~$\_\cdot\_\c N(X)\times N(X)\to\Z$ defined by
\[
\l_0^2=\l_1^2=\l_0\cdot \l_1-1=0.
\]
The class~$[C]\in N(X)$ \wrt $\varphi$ of a curve or 1-dimensional singular component
is defined as $\alpha\,\l_0+\beta\,\l_1$
if $\varphi^{-1}(C)$ has bidegree~$(\alpha,\beta)$.
\END
\end{definition}

\begin{definition}[singular type]
\label{def:st}
The \df{singular type}~$\SingType X$ of a surface~$X\subset\S^3$
is defined as $\Di$, $\Dii$, $\Diii$, $\Div$ or $\Dv$,
if each of the following five properties holds (and $\SingType X:=$``Undefined'' otherwise):
\begin{enumerate}[topsep=0pt,itemsep=0pt,leftmargin=8mm,label=(\roman*),ref=(\roman*)]
\item\label{i}
The surface $X$ is 2-circled and of degree eight.

\item\label{ii}
The incidence relations of the singular components of~$X$
are characterized by the corresponding diagram in \Cref{tab:s},
where
\begin{Mlist}
\item a line segment represents a non-real double line,
\item a loop represents a double curve with specified name, and
\item a disc represents the complex incidence point between the corresponding
complex double curves.
\end{Mlist}
\begin{table}[!ht]
\caption{Incidences between singular components for each singular type.}
\label{tab:s}
\centering
\csep{3mm}
\begin{tabular}{@{}ccccc@{}}
{\bf D1} & {\bf D2} & {\bf D3} & {\bf D4} & {\bf D5}\\
\DI & \DII & \DIII & \DIV & \DV
\\[-2mm]
Viviani curve & two circles & twisted quartic & two circles & conic
\end{tabular}
\end{table}

\item\label{iii}
If~$p\in \Sing X$ is a complex incidence point between singular components
\st $p$ has multiplicity $m$ and lies on $r$ complex double lines,
then
\[
(m,\,r,\,|X_\R\cap \{p\}|)\in \{(4,4,1),\,(3,2,0),\,(2,1,0),\,(2,0,1)\}.
\]

\item\label{iv}
There exists a birational morphism~$\P^1\times\P^1\to X$ \st the following holds.
\begin{Mlist}
\item A hyperplane section of $X$ has class $2\,\l_0+2\,\l_1$.
\item The set of classes of circles in $X$ containing a general point is $\{\l_0,\l_1\}$.
\item A complex double line in $X$ represented by a vertical or horizontal line segment
has class $\l_0$ and $\l_1$, \resp.
\item The double Viviani curve at~$\Di$ has class $2\,\l_0+2\,\l_1$.
\item The double circles at~$\Dii$ and~$\Div$ each have class $\l_0+\l_1$.
\item The double twisted quartic at~$\Diii$ has class $2\,\l_0+2\,\l_1$.
\item The double conic at~$\Dv$ has class either $2\,\l_0$ or $\l_0$.
\end{Mlist}

\item\label{v}
If $C\subset X$ is a circle containing a general point and
$V\subset X$ a singular component, then $|C\cap V|=[C]\cdot [V]$.
\END
\end{enumerate}
\end{definition}

\begin{theorem}
\label{thm:s}
Suppose that $X\subset\S^3$ is a celestial surface.
\begin{claims}
\item\label{thm:s:a}
$\deg X\in\{2,4,8\}$.
\item\label{thm:s:b}
If $\deg X=4$, then $|\Sing X|\leq 4$ and $|\Sing X_\R|\leq 2$.
\item\label{thm:s:c}
If $\deg X=8$, then $\SingType X\in\{\Di,\,\Dii,\,\Diii,\,\Div,\,\Dv\}$.
\end{claims}
\end{theorem}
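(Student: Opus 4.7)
The plan is to reduce parts~(a) and~(b) to the degree bounds already cited in \SEC{state}, and to treat part~(c) via the Skopenkov--Krasauskas parametrization followed by a case analysis of the coincidence locus of that parametrization.

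Part~(a) is immediate: the restriction $\deg X\in\{2,4,8\}$ for celestial surfaces in $\S^3$ is established in \cite{2021circle,1980,2000}. For part~(b), if $\deg X=4$ then $X$ is a Darboux cyclide and has only isolated singularities by \cite{2021circle}. To obtain the bounds $|\Sing X|\le 4$ and $|\Sing X_\R|\le 2$, I would realise $X$ as a projection into $\S^3$ of a weak del Pezzo surface of degree four, whose contracted $(-2)$-curve configurations are combinatorially restricted to at most four; then tracking which contracted orbits are preserved by the real structure $\sigma_X$ gives the real bound.

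For part~(c), I would invoke Theorem~1.1 of \cite{2019sko} to assume after M\"obius equivalence that $X$ is either the pointwise Euclidean sum of two circles in $\R^3$ or the pointwise quaternionic product of two circles in the unit-quaternion $3$-sphere. Each description yields a natural birational morphism $\varphi\c\P^1\times\P^1\to X$ whose four components are bihomogeneous of bidegree~$(2,2)$. Hyperplane sections therefore pull back to $2\,\l_0+2\,\l_1$, and the two rulings of $\P^1\times\P^1$ map to the two pencils of circles on $X$, yielding the classes $\l_0$ and $\l_1$; property~(i) and the first three bullets of property~(iv) follow at once.

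Next, $\Sing X$ equals the image under $\varphi$ of the coincidence divisor $W\subset\P^1\times\P^1$ consisting of points $a$ where either $|\varphi^{-1}\varphi(a)|>1$ or $\varphi$ is non-immersive at $a$. I would compute $W$ explicitly from the equations $c_1(t)+c_2(s)=c_1(t')+c_2(s')$ (respectively $q_1(t)\,q_2(s)=q_1(t')\,q_2(s')$) and show that each irreducible component of $W$ has bidegree in $\{(1,0),(0,1),(1,1),(2,2)\}$. The bidegree of a component of $W$ then determines the class of its image in $N(X)$, the Bezout identity $[C]\cdot[V]=|C\cap V|$ on $\P^1\times\P^1$ yields property~(v), and the five cases $\Di,\dots,\Dv$ correspond to the possible configurations of $W$ produced by degenerations of the circle pair (coplanarity, congruence of radii, orthogonality or coincidence of planes, and so on), together with the real structure on $(c_1,c_2)$ that distinguishes which components are real and which are conjugate pairs.

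The main obstacle will be verifying property~(iii): at every complex incidence point $p\in\Sing X$ one must determine the local multiplicity $m$, the number $r$ of complex double lines through $p$, and the reality of $p$, and then check that the triple lies in the admissible set $\{(4,4,1),(3,2,0),(2,1,0),(2,0,1)\}$. This requires a local normal-form analysis of $\varphi$ near each preimage of $p$. The symmetries of the sum/product parametrization should drastically reduce the number of distinct local pictures, but showing that no other triple $(m,r,|X_\R\cap\{p\}|)$ can occur, and matching the remaining pictures with the diagrams in \Cref{tab:s}, is the technically demanding step that governs the length of the argument.
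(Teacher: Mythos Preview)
Your approach to~(c) differs substantially from the paper's and has a real gap. Both you and the paper open with the Bohemian/Cliffordian dichotomy (the paper phrases it via the ``quartet'' of \Cref{thm:quartet}, which is essentially equivalent to invoking \cite{2019sko}). After that the routes diverge: you want to compute the coincidence divisor $W\subset\P^1\times\P^1$ directly from the sum/product parametrisation and read off the singular type from the bidegrees of its components. The paper never computes $W$. It first uses complex projections $\pr_p\c\S^3\dto\P^3$ from singular points to pin down multiplicities and the number of concurrent double lines (\Cref{prp:pr}, via the del~Pezzo structure of the projected surface $\cZ_p$), and then constrains the remaining singular components through the \emph{sectional delta invariant} (\Cref{def:delta}, \Cref{prp:delta}): a handful of linear relations among the $\Delta(V_i)$ forces the configurations $\Di$--$\Dv$ and excludes all others (\Cref{prp:P}, \Cref{prp:D1D2}, \Cref{prp:D345}).

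The gap in your plan is the clause ``the five cases $\Di,\dots,\Dv$ correspond to the possible configurations of $W$ produced by degenerations of the circle pair''. That is the content of the theorem, not a step toward it. To carry out your strategy you would have to stratify the moduli of circle pairs and, on every stratum, verify both the component bidegrees of $W$ and the incidence pattern of their images. The paper's argument shows this is not routine: phantom configurations such as two double conics meeting in two points (diagrams~$\Bi$ and~$\Bii$ of \Cref{tab:A4}) or three residual singular curves (the case $(r,\Psi)=(3,2)$ in the proof of \Cref{prp:P}) have to be excluded, and the exclusions rely on the delta-invariant calculus together with B\'ezout contradictions on the projected surfaces~$\cZ_p$. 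Your direct computation of $W$ offers no mechanism for these exclusions. Similarly, property~\ref{iii} in the paper falls out of \Cref{prp:pr}, which ties the multiplicity at~$p$ to $\deg\cZ_p$; a local normal-form analysis would have to rediscover that global constraint from scratch. For~(a) and~(b) the paper simply cites \citep[Theorem~1 and Corollary~5]{2021circle}; your $(-2)$-curve sketch for~(b) is more than is needed.
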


\begin{remark}
\label{rmk:s}
\Cref{thm:s}\ref{thm:s:c} is the main contribution of the current article
and builds on \cite{2001} (see \Cref{thm:mc}).
Theorems~\ref{thm:s}\ref{thm:s:a} and \ref{thm:s}\ref{thm:s:b}
follow from
\citep[Theorem~1 and Corollary~5]{2021circle}.
\END
\end{remark}

The (complex) \df{stereographic projection}~$\pr_p\c \S^3\dto \P^3$
is defined as the complex linear projection from (complex) center~$p\in\S^3$.

The \df{Euclidean model}~$\bU(X)\subset\R^3$ of $X\subset\S^3$ is defined as~$(\iota\circ\pr_\vv)(X_\R)$,
where $\vv=(1:0:0:0:1)$ and
$\iota\c\P^3_\R\dto \R^3$ sends $(y_0:y_1:y_2:y_3)$ to $(y_1,y_2,y_3)/y_0$.
It was already known to Hipparchus (190--120~BCE) that the Euclidean model of a circle
is either a circle or a line.
We call $X\subset\S^3$ \df{Bohemian} if there exists circles $A,B\subset\R^3$ \st
$\bU(X)$ is the Zariski closure of
\[
\set{a+b}{a\in A,~b\in B}.
\]
The \df{spherical model}~$\bS(X)$ in the 3-dimensional \df{unit-sphere}~$S^3\subset\R^4$ is defined
as~$\iota(X_\R)$, where
$\iota\c\S^3_\R\to S^3$ sends $(y_0:y_1:y_2:y_3:y_4)$ to $(y_1,y_2,y_3,y_4)/y_0$.
We call $X\subset\S^3$ \df{Cliffordian} if there exists circles $A,B\subset S^3$ \st
\[
\bS(X)=\set{a\star b}{a\in A,~b\in B},
\]
where we identified $S^3$ with the unit-quaternions and
$\_\star\_\c S^3\times S^3\to S^3$
denotes the \df{Hamiltonian product}.
We call $X\subset\S^3$ \df{great} if $\bS(X)$ contains a great circle through a general point.

The \df{M\"obius transformations}~$\aut\S^3$
are defined as the projective transformations of~$\P^4$
that leave~$\S^3$ invariant.
If $X,X'\subset\S^3$ are M\"obius equivalent,
then there exists an angle preserving rational map~$\R^3\dto \R^3$
that sends $\bU(X)$ to $\bU(X')$.

\begin{corollary}
\label{cor:BC}
If $X\subset \S^3$ is a celestial surface \st $\deg X\notin\{2,4\}$,
then the following holds:
\begin{Mlist}
\item
$\SingType X\in\{\Di,\,\Dii,\,\Diii,\,\Div,\,\Dv\}$.
\item
$\SingType X\in\{\Di,\,\Dii\}$ if and only if
$X$ is M\"obius equivalent to a Bohemian surface.
\item
$\SingType X\in\{\Diii,\,\Div,\,\Dv\}$ if and only if
$X$ is M\"obius equivalent to a Cliffordian surface.
\item
If $X$ is M\"obius equivalent to a great surface, then $\SingType X=\Dv$.
\item
If $C\subset X$ is a circle \st $C_\R\cap\Sing X=\varnothing$, then $\SingType X\notin\{\Dii,\Div\}$.
\end{Mlist}
\end{corollary}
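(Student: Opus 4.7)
The plan is as follows. First, the opening bullet is immediate from \Cref{thm:s}: since $\deg X\notin\{2,4\}$, part \ref{thm:s:a} forces $\deg X=8$, whence part \ref{thm:s:c} yields $\SingType X\in\{\Di,\Dii,\Diii,\Div,\Dv\}$.

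For the second and third bullets I would invoke the classification of Skopenkov and Krasauskas \citep[Theorem~1.1]{2019sko} recalled in \SEC{state}: every degree-8 celestial surface in $\S^3$ is, up to M\"obius equivalence, either Bohemian or Cliffordian. It then suffices to prove the forward implications ``Bohemian $\Rightarrow \SingType X\in\{\Di,\Dii\}$'' and ``Cliffordian $\Rightarrow \SingType X\in\{\Diii,\Div,\Dv\}$''; the converses follow from the dichotomy together with the disjointness of the two type-classes. I would verify these by working with explicit normal forms. In the Bohemian case $\bU(X)$ is the Zariski closure of $\{a+b:a\in A,~b\in B\}$ for circles $A,B\subset\R^3$, and the complex self-intersection locus is governed by $(a_1-a_2)+(b_1-b_2)=0$; direct computation (cf.\ \Cref{tab:D,tab:D123}) reveals it to be either a Viviani curve (type $\Di$) or a pair of circles meeting at a point (type $\Dii$), depending on the relative configuration of $A$ and $B$. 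The Cliffordian case $\bS(X)=\overline{\{a\star b\}}$ admits an analogous analysis via the Hamiltonian product and produces the three types $\Diii$, $\Div$, $\Dv$ (cf.\ \Cref{tab:D,tab:D45}) according to the linking behavior of $A$ and $B$ in $S^3$.

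For the fourth bullet, a great circle in $S^3$ is an orbit of a one-parameter subgroup of $(S^3,\star)$, so by applying a Clifford translation I may assume that the Cliffordian presentation of $X$ has the form $\bS(X)=\overline{A\star G}$ with $G$ a subgroup. The resulting group-translation symmetry forces the complex double curve to be a full $G$-orbit, i.e., a single translate of $G$, yielding a double conic and hence the type $\Dv$. For the fifth bullet, I would invoke property \ref{v} of \Cref{def:st}: in types $\Dii$ and $\Div$ each double circle $V$ has class $\l_0+\l_1$, so a circle $C\subset X$ through a general point (of class $\l_0$ or $\l_1$) satisfies $|C\cap V|=[C]\cdot[V]=1$. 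Since $C$ and $V$ are defined over $\R$, the unique complex intersection point is fixed by complex conjugation and hence real, contradicting $C_\R\cap\Sing X=\varnothing$; the conclusion extends to non-general circles by semicontinuity of the set-theoretic intersection within the pencil.

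The main obstacle is the case-by-case normal-form analysis for the second and third bullets. While the Skopenkov--Krasauskas dichotomy reduces the problem to two parametric families, pinning down precisely which of $\Di$--$\Dv$ each family realizes requires traversing the explicit subfamilies tabulated in \Cref{tab:D,tab:D123,tab:D45} and identifying the complex double components (and their incidence pattern) in each generic and degenerate position; the specialisations at which one singular type collapses to another are the most delicate part of the computation.
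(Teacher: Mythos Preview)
Your arguments for the first and fifth bullets are essentially the paper's. For the fifth bullet the paper works directly on $\P^1\times\P^1$ via the desingularization~$\varphi$: the preimages $C'=\varphi^{-1}(C)$ and $V'=\varphi^{-1}(V)$ have bidegrees $(1,0)$ and $(1,1)$, so $|C'\cap V'|=1$; since both are real curves this single point is real, contradicting $C_\R\cap V=\varnothing$. This is exactly your parity/reality argument, but carried out upstairs where it applies to \emph{every} circle $C\nsubseteq\Sing X$ (by \Cref{prp:varphi}), not only general ones---so no semicontinuity limit is needed.

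For the second and third bullets your route diverges sharply from the paper's, and there is a genuine gap. The paper does \emph{not} compute the singular locus from Bohemian/Cliffordian normal forms. Instead it uses the quartet machinery built in \SEC{pseudo} and \SEC{part2}: \Cref{prp:D1D2} shows that a Bohemian \emph{quartet} forces $\SingType X\in\{\Di,\Dii\}$, while \Cref{prp:P,prp:D345} show that a Cliffordian quartet forces $\SingType X\in\{\Diii,\Div,\Dv\}$. The bridge between ``Bohemian/Cliffordian surface'' and ``contains a Bohemian/Cliffordian quartet'' is then imported from \citep[Propositions~27 and~30]{2024fact}. Your proposed case-by-case normal-form analysis is in principle feasible, but you correctly flag it as the main obstacle and do not carry it out; the ``specialisations at which one singular type collapses to another'' are precisely what the paper's lattice-theoretic arguments (sectional delta invariants, \Cref{lem:V,lem:A4}) are designed to control without explicit computation.

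Your fourth-bullet sketch also has gaps. Being \emph{great} means $\bS(X)$ contains a great circle through a general point; it does not directly hand you a Cliffordian presentation $A\star B$ with one factor a great circle (let alone a subgroup), and you have not excluded the Bohemian case before assuming such a presentation. Even granting a subgroup factor~$G$, the claim that the double curve is a single $G$-orbit (hence a conic) needs justification---the double locus is determined by coincidences $a_1\star g_1=a_2\star g_2$, and translating by $G$ permutes circles in the pencil but does not obviously act transitively on the double curve. The paper instead cites \citep[Proposition~25]{2024great} for this item.
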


\begin{conjecture}
If $\SingType X=\Dv$,
then $X$ is M\"obius equivalent to a great surface.
\end{conjecture}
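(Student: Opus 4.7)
The plan is to use Corollary~\ref{cor:BC} to reduce to the Cliffordian setting, and then extract a quaternionic stabilizer condition that forces one of the two generating circles of $X$ to be a great circle of $S^3$.

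Since $\SingType X = \Dv$ lies in $\{\Diii,\Div,\Dv\}$, Corollary~\ref{cor:BC} yields that $X$ is M\"obius equivalent to a Cliffordian surface, so I may assume $\bS(X)=\set{a\star b}{a\in A,\,b\in B}$ for circles $A,B\subset S^3$ identified with the unit quaternions. The natural birational morphism $\varphi\c\P^1\times\P^1\to X$ is then $(s,t)\mapsto\alpha(s)\star\beta(t)$ for parametrizations $\alpha,\beta$ of $A,B$, and its two pencils of circles are $\set{\alpha(s)\star B}{s\in\P^1}$ and $\set{A\star\beta(t)}{t\in\P^1}$. After possibly interchanging $A$ and $B$, I arrange that the double conic $C\subset X$ from Definition~\ref{def:st} has class $[C]\in\{2\l_0,\,\l_0\}$.

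The central input is a left-stabilizer computation: the subgroup $H:=\set{q\in S^3}{q\star B=B}$ is trivial when $B$ is a small circle, and is a one-parameter subgroup of $S^3$ when $B$ is a great circle. Indeed, left multiplication by a unit quaternion acts as an element of $SO(4)$; if it preserves a small circle $B$ it must fix the nonzero Euclidean center $c\in\R^4$ of $B$, and $qc=c$ as quaternions forces $q=1$. For a great circle $B=p_0H_0$, where $H_0$ is the one-parameter subgroup obtained by left-translating $B$ to pass through $1\in S^3$, a direct computation gives $H=p_0H_0p_0^{-1}$, which is one-dimensional.

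Next, I interpret the double conic: the preimage $\varphi^{-1}(C)\subset\P^1\times\P^1$ is a divisor of bidegree $(2,0)$ or $(1,0)$, supported on vertical fibers. In the class-$2\l_0$ case there exist distinct $s_1\neq s_2$ with $\alpha(s_1)\star B=\alpha(s_2)\star B$, equivalently $\alpha(s_1)^{-1}\alpha(s_2)$ is a nontrivial element of $H$. By the stabilizer lemma this forces $B$ to be a great circle. But then every translate $\alpha(s)\star B$ is also great (left multiplication on $S^3$ preserves greatness), so $\bS(X)$ contains a great circle through a general point and $X$ is a great surface.

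The main obstacle is the class-$\l_0$ subcase, in which $\varphi^{-1}(C)$ is a single vertical fiber $\{s_0\}\times\P^1$ that is supposed to map two-to-one onto $C$. Since $\varphi$ restricted to any vertical fiber is the isomorphism $\beta(t)\mapsto\alpha(s_0)\star\beta(t)$, there is no literal two-to-one folding of this fiber, so this case must instead arise as a tangential degeneration of the $2\l_0$ case in which two identified $s$-values collide. Formalizing this limiting step---for instance by exhibiting a flat one-parameter family of Cliffordian pairs $(A_\p,B_\p)$ degenerating to $(A,B)$ and applying semicontinuity to the class of the double locus---is the delicate technical obstacle this plan must still overcome.
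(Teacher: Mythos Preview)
The statement you are attempting to prove is presented in the paper as a \emph{Conjecture}, not a theorem; the paper gives no proof of it and treats it as open. So there is no ``paper's own proof'' to compare against---what you have written is an attack on an open problem.

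As such an attack, your argument is incomplete by your own admission: the $[C]=\l_0$ subcase is left unresolved, and your suggestion to handle it by a flat degeneration from the $2\l_0$ case is only a sketch. In fact the $\l_0$ case is not a degeneration but a genuine separate phenomenon: the preimage of $C$ is a single fiber along which the anticanonical projection is \emph{tangent} (cf.\ the cuspidal discussion in the proof of \Cref{lem:V}), so $C$ is a cuspidal double curve rather than a nodal one. The natural analogue of your stabilizer argument here would be an \emph{infinitesimal} version---showing that the Lie algebra of the left stabilizer of $B$ is nontrivial---but you have not carried this out. A minor additional point: ``interchanging $A$ and $B$'' does not give the same surface since $\star$ is noncommutative; if $[C]\in\{\l_1,2\l_1\}$ you should instead run the analogous argument with the \emph{right} stabilizer of $A$.

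Your stabilizer observation---that only great circles in $S^3$ admit a nontrivial left-translation stabilizer---is a genuinely useful structural insight, and your treatment of the $2\l_0$ case with two distinct fibers is essentially correct (and extends to complex conjugate fibers, since the Euclidean center $c$ of a small real circle remains an invertible quaternion over $\C$). But the $\l_0$ case remains the real obstacle to turning this into a proof.
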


\begin{remark}
\Cref{cor:BC}
depends on \citep[Theorem~II]{2024fact} and \citep[Proposition~25]{2024great}.
This corollary refines \citep[Theorem~1.1]{2019sko} by Skopenkov and Krasauskas:
non-quartic celestial surfaces
are up to M\"obius equivalence either Bohemian or
Cliffordian.
\END
\end{remark}

\begin{corollary}
\label{cor:deg}
If $X\subset \S^3$ is a celestial surface, then $\deg\bU(X)\in\{1,2,3,4,6,7,8\}$.
\end{corollary}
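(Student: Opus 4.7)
The plan is to combine the degree bound $\deg X \in \{2,4,8\}$ from \Cref{thm:s}\ref{thm:s:a} with the standard degree formula for a linear projection from a point in $\P^4$. For an irreducible surface $Y \subset \P^4$ that is not a cone with apex $p$, the projection $\pi_p \c Y \dto \P^3$ is generically finite onto its image with $\deg \pi_p(Y) = \deg Y - \mathrm{mult}_p Y$ (setting $\mathrm{mult}_p Y = 0$ when $p\notin Y$), and is in fact birational for our celestial $X$ because stereographic projection restricted to a circle is classically birational. Since $\bU(X)$ differs from $\pi_\vv(X)$ only by the affine chart $\iota$, the same formula gives $\deg \bU(X) = \deg X - \mathrm{mult}_\vv X$, and the problem reduces to enumerating the possible values of $\mathrm{mult}_\vv X$ at the real point $\vv = (1:0:0:0:1)$.

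For $\deg X = 2$, the surface $X$ is a smooth hyperplane section of the M\"obius quadric, so $\mathrm{mult}_\vv X \in \{0,1\}$ and $\deg \bU(X) \in \{1,2\}$. For $\deg X = 4$, \Cref{thm:s}\ref{thm:s:b} implies that the singularities of $X$ are isolated, so $\mathrm{mult}_\vv X \in \{0,1,2,3\}$ and $\deg \bU(X) \in \{1,2,3,4\}$.

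The core of the argument is the case $\deg X = 8$. By \Cref{thm:s}\ref{thm:s:c}, $\SingType X \in \{\Di,\Dii,\Diii,\Div,\Dv\}$, so the singular locus of $X$ consists of double curves, whose generic points have multiplicity~$2$, together with the complex incidence points classified by property~\ref{iii} of \Cref{def:st}. The decisive observation is that $\vv$ is a \emph{real} point, so the incidence triple $(m,r,|X_\R\cap\{p\}|) = (3,2,0)$ is unavailable at $\vv$: multiplicity-$3$ incidence points are non-real. Consequently, if $\vv \in X$, it must be either a smooth point (multiplicity $1$), a generic point of a real double curve or a real incidence point of type $(2,0,1)$ (multiplicity $2$), or a real incidence point of type $(4,4,1)$ (multiplicity $4$). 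Thus $\mathrm{mult}_\vv X \in \{0,1,2,4\}$ and $\deg \bU(X) \in \{4,6,7,8\}$.

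Taking the union of the three cases yields $\deg \bU(X) \in \{1,2,3,4,6,7,8\}$. The main obstacle is the exclusion of $\deg \bU(X) = 5$, which would correspond precisely to $\deg X = 8$ together with $\mathrm{mult}_\vv X = 3$; this is ruled out by the reality constraint in property~\ref{iii} of \Cref{def:st}, and everything else is a direct enumeration via the singular-type classification.
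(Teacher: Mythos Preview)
Your proof is correct and follows essentially the same approach as the paper: both use the projection formula $\deg\pr_p(X)=\deg X-\operatorname{mult}_p X$ together with \Cref{thm:s}, and the crux is ruling out multiplicity~$3$ at the real centre via property~\ref{iii} of \Cref{def:st}. Your write-up is somewhat more thorough in spelling out the cases $\deg X\in\{2,4\}$, whereas the paper argues only the exclusion of degree~$5$ and leaves the remaining values implicit.
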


\begin{example}
\label{exm:deg}
Suppose that $X\subset\S^3$ is the Bohemian celestial surface \st
its Euclidean model~$\bU(X)$ is parametrized by $(\cos\alpha,\sin\alpha,0)+(\cos\beta,0,\sin\beta)$
with $0\leq\alpha,\beta<2\pi$.
It follows from \Cref{cor:BC} that $\SingType X\in\{\Di,\Dii\}$.
We know from \ref{iii} at \Cref{def:st} the multiplicities of the complex points in $\Sing X$.
See \Cref{fig:deg} for renderings of $\bU(\gamma_m(X))$,
where
$m\in\{0,1,2,4\}$ and
$\gamma_m\in\aut\S^3$ is a M\"obius transformation
\st $(1:0:0:0:1)$ has multiplicity~$m$ in~$\gamma_m(X)$.
Notice that $\deg\bU(\gamma_m(X))=8-m$.
Since $\bU(\gamma_4(X))$ contains two lines,
we find that $\SingType X=\Dii$.
\END
\end{example}

\begin{figure}[!ht]
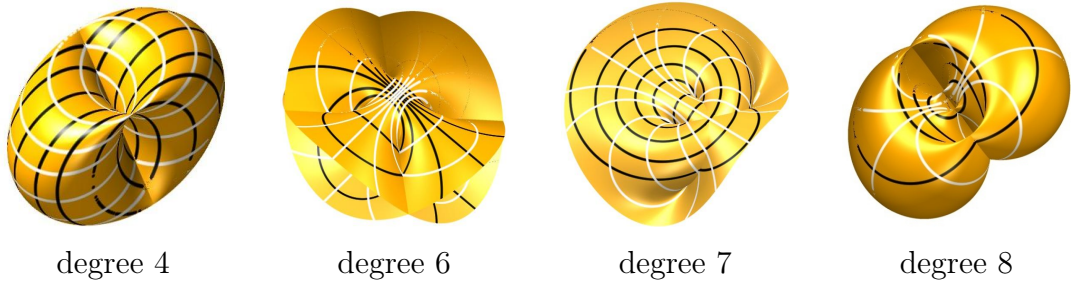

\centering
\csep{3.5mm}
\begin{tabular}{cccc}
\fig{3}{3}{D2a-4} &
\fig{3}{3}{D2a-6} &
\fig{3}{3}{D2a-7} &
\fig{3}{3}{D2a-8}
\\
degree 4 & degree 6 & degree 7 & degree 8
\end{tabular}
\caption{M\"obius transformations of a Bohemian dome of singular type~$\Dii$.}
\label{fig:deg}
\end{figure}

We call a point in a surface \df{visible} if
it is real and non-isolated in almost all real hyperplane sections
that contain this point.
The \df{visible singular locus} of a surface is defined
as the set of visible points in the singular locus.
We remark that an end point of an arc in the visible singular locus
is a \df{pinch point}.
The visible points of a celestial surface $X\subset\S^3$
is obtained by taking the closure of each circle in $X\setminus\Sing X$.

The \df{visible type} $\VisType X$ of a surface~$X\subset\S^3$
is defined as $\SingType X$ together with a symbol
\st the visible singular locus of $X$ is homeomorphic to this symbol.
The symbol is encapsulated by the left square bracket~$[$ and right square bracket~$]$.
No symbol means that the visible singular locus is the empty set.
In \Cref{exm:deg}, $\VisType X=\Dii[+]$ because $\SingType X=\Dii$ and the visible singular locus
of $X$ is homeomorphic to two line segments that form a cross.
If for example $\VisType X=\Dv[~]$, then $X$ has no visible singular locus.
We remark that the symbols $\alpha$ and $\sigma$ are homeomorphic to the set
$\set{(t^2,t^3-t)\in\R^2}{-11+c\leq 10\,t\leq 11}$ where $c=0$ and $c=1$, \resp.

\begin{corollary}
\label{cor:VisType}
If $X\subset\S^3$ is a celestial surface \st $\deg X\notin\{2,4\}$,
then
\begin{gather*}
\VisType X
\in
\{
\Di[=],\,
\Dii[+],\,
\Diii[~],\,
\Diii[\circ],\,
\Diii[=],\,
\Diii[-],\,
\\
\Div[\infty],\,
\Div[\alpha],\,
\Div[\sigma],\,
\Div[+],\,
\Div[=],\,
\Div[-],\,
\Dv[~],\,
\Dv[\circ]
\}.
\end{gather*}
\end{corollary}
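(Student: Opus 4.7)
The plan is to reduce the classification to $\deg X=8$ and then to analyze each of the five possible values of $\SingType X$ separately. By \Cref{thm:s}\ref{thm:s:a} together with the hypothesis $\deg X\notin\{2,4\}$ we have $\deg X=8$, and \Cref{thm:s}\ref{thm:s:c} then yields $\SingType X\in\{\Di,\Dii,\Diii,\Div,\Dv\}$; it therefore suffices to determine, for each of these five singular types, the finite list of homeomorphism types attained by the visible singular locus.

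First I would recall that, by the definition preceding \Cref{cor:VisType}, the visible points of a celestial surface $X$ form the closure of the union of the real circles contained in $X\setminus\Sing X$. In particular, a complex (non-real) double line — drawn as a line segment in \Cref{tab:s} — is invisible, so only the real parts of the named double curves (Viviani curve, circles, twisted quartic, or conic) and of the incidence points described in item~\ref{iii} of \Cref{def:st} can contribute. For each singular type I would then use the birational morphism $\varphi\colon\P^1\times\P^1\to X$ of \Cref{def:st}\ref{iv} together with the intersection formula of \Cref{def:st}\ref{v} to pull each double curve back to a curve of bidegree $(1,1)$, $(2,2)$, or $(1,0)$ in $\P^1\times\P^1$ and identify, via the real structure induced on $\P^1\times\P^1$, the real arcs that are swept by real circles of classes $\l_0$ and $\l_1$ together with the real pinch points that bound them.

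Carrying this out case by case yields: for $\Di$, two arcs of the real Viviani curve meeting at its node, giving $=$; for $\Dii$, two real circles crossing at the real incidence point, giving $+$; for $\Dv$, the conic of class $2\,\l_0$ or $\l_0$ is either totally imaginary (symbol $~$) or a real circle (symbol $\circ$). The $\Diii$ and $\Div$ cases require more care, because each of the double curves may be real or purely imaginary, may acquire real pinch points, and the incidence points of multiplicity $2$ or $4$ from item~\ref{iii} of \Cref{def:st} may or may not be real. Enumerating the real-structure possibilities compatible with~\ref{iii} produces the lists $\{~,\circ,=,-\}$ for $\Diii$ and $\{\infty,\alpha,\sigma,+,=,-\}$ for $\Div$, and realization of each of the fourteen configurations is witnessed by the explicit parametrizations summarized in \Cref{tab:D123,tab:D45}.

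The main obstacle I expect is distinguishing the symbols $\alpha$ and $\sigma$ in the $\Div$ case. Both arise when a double circle carries a real pinch point, in which case the visible singular locus is locally a semi-cubical cusp $(t^2,t^3-t)$; whether its two real half-branches close up into a loop (symbol $\alpha$) or terminate at distinct real points of $X$ (symbol $\sigma$) is a global question about the position of the real pinch point relative to the real incidence point of the two double circles. The plan is to settle this by pulling back along $\varphi$: the relevant double circle corresponds to a real curve of class $\l_0+\l_1$ in $\P^1\times\P^1$, and the real topology of its image is controlled by the combinatorics of the real intersections of this $(1,1)$-curve with the real ramification divisor of $\varphi$. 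A finite verification on the Cliffordian models provided by \Cref{cor:BC} should then confirm that both $\alpha$ and $\sigma$ actually occur, completing the classification.
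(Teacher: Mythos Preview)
Your overall strategy---reduce to $\deg X=8$, split by $\SingType X$, and pull back the double curves along~$\varphi$---matches the paper's. But several steps are either wrong or too vague to constitute a proof.

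\textbf{The D1 and D2 cases are not as immediate as you suggest.} Your claim that for $\Di$ we get ``two arcs of the real Viviani curve meeting at its node, giving $=$'' is self-contradictory: the symbol $=$ denotes two \emph{disjoint} arcs. Likewise for $\Dii$ you assert the visible locus is ``two real circles crossing,'' but in fact only arcs of those circles are visible. The paper treats both cases simultaneously in \Cref{lem:D12} by passing to the Euclidean model $A+B$ and doing explicit plane-by-plane geometry with the ``complementary index'' $b\mapsto b^*$; it then argues that the case split $\radius(A)>\radius(B)$ versus $\radius(A)=\radius(B)$ forces $\Di[=]$ and $\Dii[+]$ respectively. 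You would need an argument of comparable substance.

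\textbf{For D3 and D4 you are missing the key quantitative tool.} ``Enumerating the real-structure possibilities compatible with~\ref{iii}'' does not bound the number of arcs: item~\ref{iii} of \Cref{def:st} concerns multiplicities at incidence points, not the topology of $\VisPoints V$. The paper's mechanism is \Cref{lem:ram}, a Hurwitz-formula bound on the number of ramification points of the $2{:}1$ map $\varphi|_W\colon W\to V$, which via \Cref{lem:tri} yields $\alpha\le\tfrac12[V]\cdot([V]-2\l_0-2\l_1)+2$ disjoint arcs. For $[V]=2\l_0+2\l_1$ this gives $\alpha\le2$, whence the $\Diii$ list; for $[V]=\l_0+\l_1$ it gives $\alpha\le1$ per circle. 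In the $\Div$ case the paper then explicitly \emph{excludes} the a~priori possible types $\Div[-\,\circ]$ and $\Div[\lambda]$ using \Cref{lem:circle}; your discussion of $\alpha$ versus $\sigma$ touches the right idea but you never rule these extra configurations out.

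\textbf{Finally, the corollary is an upper bound, not a realization result.} Your claim that all fourteen types are witnessed by \Cref{tab:D123,tab:D45} is false: no examples with $\VisType X\in\{\Div[\sigma],\Div[=],\Div[-]\}$ appear there, and indeed the paper conjectures (\Cref{cnj:VisType}) that these three do not occur. You only need to show containment in the list, not surjectivity.
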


\begin{conjecture}
\label{cnj:VisType}
If $X\subset\S^3$ is a celestial surface, then
\[
\VisType X\notin\{\Div[\sigma],\,\Div[=],\,\Div[-]\}.
\]
\end{conjecture}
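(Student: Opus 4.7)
The plan is to reduce to the Cliffordian setting via \Cref{cor:BC}: since $\SingType X = \Div$ forces $\deg X = 8$ by \Cref{thm:s}\ref{thm:s:c}, \Cref{cor:BC} gives a M\"obius equivalence between $X$ and a Cliffordian surface, so $\bS(X) = \set{a \star b}{a \in A,\, b \in B}$ for some real circles $A, B \subset S^3$. The two real double circles $V_1, V_2 \subset X$ then arise from the distinct parameter pairs identified by the Hamiltonian product. First, I would enumerate, up to the $SO(4)$-action on $S^3$, the essentially different configurations of $(A, B)$ producing $\SingType X = \Div$ rather than $\Diii$ or $\Dv$, to obtain a short list of normal forms to analyze.

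Next, I would pin down the visible part of each $V_i$. The class equality $[C] \cdot [V_i] = \l_0 \cdot (\l_0 + \l_1) = 1$ for any real circle $C$ in either pencil, combined with the intersection property in \Cref{def:st}, forces $|C \cap V_i| = 1$, and compatibility with the real structure makes this single intersection point real. As $C$ varies continuously through the real $\P^1$ parametrizing its pencil, this point sweeps a connected real arc in the $S^1$-shaped real locus $V_{i,\R}$. A parity argument for the local branch structure of $X$ along $V_i$ then shows the number of pinch points on each $V_i$ is even, so the visible part of $V_i$ is either empty, a single closed arc bounded by two pinch points, or all of $V_{i,\R}$.

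The third step is to rule out each of the three forbidden visible types. For $\Div[\sigma]$, the symbol requires one visible arc to have a single pinch-point endpoint while its other endpoint sits at the real incidence point $p$; this is incompatible with the previous step, since a smooth real point of the double curve cannot serve as the boundary of a visible arc. For $\Div[=]$ and $\Div[-]$, the incidence point must be a pair of complex conjugates, so $V_1$ and $V_2$ do not meet visibly. Here the exchange symmetry of the Hamiltonian factorization — for instance the involution induced by the quaternionic inverse $a \star b \mapsto b^{-1} \star a^{-1}$ — should interchange $V_1$ and $V_2$ up to M\"obius automorphism of $X$, forcing their visible parts to be homeomorphic; combined with the parity conclusion this leaves only the symmetric configurations already accounted for by the permitted visible types $\Div[\infty]$, $\Div[\alpha]$ and $\Div[+]$.

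The principal obstacle is producing this exchange symmetry cleanly in the complex-incidence case, and then checking that it indeed swaps $V_1$ and $V_2$ in every Cliffordian subtype rather than fixing them separately. A computational fallback is to verify each of $\Diva, \Divb, \Divc, \Divd$ directly using the explicit parametrizations in \Cref{tab:D45}: compute the real pencil maps into $V_{i,\R}$ in coordinates and read off the visible arcs, observing that none of the resulting pictures realize $\sigma$, $=$, or $-$.
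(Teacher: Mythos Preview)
This statement is labelled as a \emph{conjecture} in the paper; there is no proof of it anywhere in the text. \Cref{cor:VisType} only shows that $\VisType X$ lies in a finite list that still \emph{includes} $\Div[\sigma]$, $\Div[=]$ and $\Div[-]$, and the evidence offered for \Cref{cnj:VisType} is purely experimental (\Cref{tab:D,tab:D45}). So there is no ``paper's own proof'' to compare against; you are attempting to settle an open problem.

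Beyond that, several steps in your sketch do not hold up against what the paper actually establishes:

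\begin{Mlist}
\item In the $\Div[\sigma]$ case you assert that ``a smooth real point of the double curve cannot serve as the boundary of a visible arc''. But \Cref{lem:circle} says exactly the opposite: an endpoint of the visible arc of a double circle with class $\l_0+\l_1$ is precisely a real branch point of~$\varphi|_W$. Nothing prevents the incidence point~$\rr=V_1\cap V_2$ from being such a branch point for $\varphi|_{W_1}$; the paper's proof of \Cref{cor:VisType} explicitly records that in the hypothetical $\Div[\sigma]$ configuration, $\rr$ \emph{is} an endpoint of $\VisPoints V_1$. The only thing the paper can exclude by this branch-point reasoning is the shape~$\lambda$, where $\rr$ would have to be a branch point for \emph{both} $\varphi|_{W_1}$ and $\varphi|_{W_2}$ simultaneously.

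\item For $\Div[=]$ and $\Div[-]$ you write that ``the incidence point must be a pair of complex conjugates, so $V_1$ and $V_2$ do not meet visibly''. This is wrong as stated: by \ref{iii} at \Cref{def:st} (the case $(m,r,|X_\R\cap\{p\}|)=(2,0,1)$) the point $\rr$ is a single real point of~$\cX$, and the paper's proof of \Cref{cor:VisType} states explicitly that in the $\Div[-]$ configuration $\rr$ is the \emph{common} endpoint of the two visible arcs. What can be complex conjugate is the fibre $\varphi^{-1}(\rr)\subset W_1\cap W_2$, but your argument conflates this with~$\rr$ itself and in any case does not apply to~$\Div[-]$.

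\item The exchange symmetry via $a\star b\mapsto b^{-1}\star a^{-1}$ is an appealing idea, but it sends $\bS(X)=A\star B$ to $B^{-1}\star A^{-1}$, which is in general a \emph{different} celestial surface, not a M\"obius image of~$X$. You would need to exhibit a genuine symmetry of~$X$ swapping $V_1$ and~$V_2$, and there is no reason to expect one for generic pairs~$(A,B)$.

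\item The ``computational fallback'' of checking $\Diva,\ldots,\Divd$ only treats four specific parametric types, not the full moduli of Cliffordian surfaces with $\SingType X=\Div$, so it cannot establish the conjecture.
\end{Mlist}

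In short, your outline contains a reasonable opening reduction and a correct parity observation (the number of real ramification points on each $W_i$ is even), but the decisive steps all rest on claims that either contradict the paper's own analysis or are unsubstantiated. The conjecture remains open.
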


\begin{example}
The surfaces in \Cref{fig:intro1} at \SEC{pro}
have visible types $\Div[\infty]$ and $\Div[\alpha]$,
and the surface in \Cref{fig:intro2} has visible type $\Diii[-]$.
Both surfaces in \Cref{fig:intro3} have visible types $\Dv[\circ]$,
but the class of the double circle is $2\,\l_0$ on the left,
and $\l_0$ on the right. The classes of the red and blue circles
are $\l_0$ and $\l_1$, \resp.
\END
\end{example}

\begin{definition}
\label{def:pmz}
The \df{parametric type} of a celestial surface $X\subset\S^3$ is defined as tuple
\[
(\tau_0,\ldots,\tau_3;\mu_0,\ldots,\mu_3)\in\R^4\times\R^4,
\]
which represents the rational map $\psi\c [0,2\pi]^2\dto \bU(X)\subset \R^3$ that is defined as
\[
(\alpha,\beta)\mapsto
(
a\,e - b\,f - c\,g - d\,h,~
a\,f + b\,e + c\,h - d\,g,~
a\,g - b\,h + c\,e + d\,f
)/q,
\]
where $q:=- a\,h - b\,g + c\,f - d\,e + (d + 1) (h + 1)$ and
\[
\setlength{\arraycolsep}{1mm}
\begin{array}{cccc}
a := \tau_1 + \tau_0 \cos\alpha,&
b := \tau_2 + \tau_0 \sin\alpha,&
c := \tau_3,                    &
d := (a^2 + b^2 + c^2 - 1)/2,
\\
e := \mu_1 + \mu_0 \cos\beta,   &
f := \mu_2 + \mu_0 \sin\beta,   &
g := \mu_3,                     &
h := (e^2 + f^2 + g^2 - 1)/2.
\\[2mm]
\end{array}
\]
It follows from \Cref{rmk:pmz} that the parameter lines of $\psi$
are circles.
\END
\end{definition}

The purpose of parametric types is to encode explicit examples
of celestial surfaces.
In \Cref{tab:D} is an overview of examples
of singular, visible and parametric types of celestial surfaces of degree~$8$.
See \SEC{id} for more details and \cite{2024celest} to
\href{https://github.com/niels-lubbes/celestial-surfaces#experiment-with-cliffordian-surfaces}{experiment}.

\begin{table}[!ht]
\caption{Visible and parametric types of celestial surfaces of degree eight.
The examples at the first two rows correspond to the surface parametrized by
$
(\cos\alpha,\sin\alpha,0)+(r\cdot \cos\beta,0,r\cdot\sin\beta).
$
The last three rows correspond to great surfaces.
}
\label{tab:D}
\centering
$
\begin{array}{l@{\hspace{1cm}}l@{}r@{~}r@{~}r@{~}r@{~~}r@{~}r@{~}r@{~}r@{\hspace{1cm}}l}
\Di[=]         &  &              &r=            & 1   &     &              &             &              &       & \text{\Cref{tab:D123}:D1a} \\\hdashline

\Dii[+]        &  &              &r=            & 2   &     &              &             &              &       & \text{\Cref{tab:D123}:D2a} \\\hdashline

\Diii[\circ]   & (&1           , & -1         , & 0, & 0 ; & 1          , & 0          , & \frac{3}{2}, & 0 )   & \text{\Cref{tab:D123}:D3a} \\
\Diii[=]       & (&\frac{1}{10}, & \frac{1}{2}, & 0, & 0 ; & \frac{1}{2}, & 0          , & \frac{1}{2}, & 0 )   & \text{\Cref{tab:D123}:D3b} \\
\Diii[-]       & (&\frac{3}{10}, & \frac{1}{2}, & 0, & 0 ; & \frac{1}{2}, & 0          , & \frac{1}{2}, & 0 )   & \text{\Cref{tab:D123}:D3c} \\
\Diii[~]       & (&1           , & -4         , & 0, & 0 ; & 1          , & 0          , & 0          , & -1)   & \text{\Cref{tab:D123}:D3d} \\
\Diii[-]       & (&\frac{1}{5} , & \frac{1}{2}, & 0, & 0 ; & \frac{1}{2}, & 0          , & \frac{1}{2}, & 0 )   & \text{\Cref{tab:D123}:D3e} \\\hdashline

\Div[\infty]   & (&1           , & 1          , & 0, & 0 ; & 1          , & 0          , & 1          , & 0 )   & \text{\Cref{tab:D45}:D4a}  \\
\Div[+]        & (&\frac{1}{2} , & 0          , & 1, & 0 ; & \frac{1}{2}, & 1          , & 0          , & 0 )   & \text{\Cref{tab:D45}:D4b}  \\
\Div[\alpha]   & (&\frac{1}{2} , & \frac{1}{2}, & 0, & 0 ; & \frac{1}{2}, & 0          , & \frac{1}{2}, & 0 )   & \text{\Cref{tab:D45}:D4c}  \\
\Div[\infty]   & (&1           , &-\frac{3}{2}, & 0, & 0 ; & 1          , & 0          , &-\frac{3}{2}, & 0 )   & \text{\Cref{tab:D45}:D4d}  \\\hdashline

\Dv[\circ]     & (&1           , & 0          , & 0, & 0 ; & 1          , & \frac{3}{2}, & 0          , & 0 )   & \text{\Cref{tab:D45}:D5a}  \\
\Dv[\circ]     & (&1           , & 0          , & 0, & 0 ; & 1          , & 2          , & 0          , & 0 )   & \text{\Cref{tab:D45}:D5b}  \\
\Dv[~]         & (&1           , & 0          , & 0, & 0 ; & 1          , & \frac{5}{2}, & 0          , & 0 )   & \text{\Cref{tab:D45}:D5c}  \\
\end{array}
$
\end{table}

\begin{table}[p]
\centering
\caption{See \Cref{tab:D} and \SEC{id}.}
\label{tab:D123}
\csep{4mm}
\begin{tabular}{ccccc}
\raisebox{15mm}[0pt][0pt]{\bf D1a} &
\fig{3}{3.5}{D1a-1} &
\fig{3}{3.5}{D1a-2} &
\DIa
\\
\raisebox{15mm}[0pt][0pt]{\bf D2a} &
\fig{3}{3.5}{D2a-1} &
\fig{3}{3.5}{D2a-2} &
\DIIa
\\
\raisebox{15mm}[0pt][0pt]{\bf D3a} &
\fig{3}{3.5}{D3a-1___0-0-0_m1-0-0_1___0-0-0_0-m15-0_1}   &
\fig{3}{3.5}{D3a-2___0-0-0_m1-0-0_1___0-0-0_0-m15-0_1}   &
\DIIIa
\\
\raisebox{15mm}[0pt][0pt]{\bf D3b} &
\fig{3}{3.5}{D3b-1___0-0-0_05-0-0_01___0-0-0_0-05-0_05} &
\fig{3}{3.5}{D3b-2___0-0-0_05-0-0_01___0-0-0_0-05-0_05} &
\DIIIb
\\
\raisebox{15mm}[0pt][0pt]{\bf D3c} &
\fig{3}{3.5}{D3c-1___0-0-0_05-0-0_03___0-0-0_0-05-0_05}  &
\fig{3}{3.5}{D3c-2___0-0-0_05-0-0_03___0-0-0_0-05-0_05}  &
\DIIIc
\\
\raisebox{15mm}[0pt][0pt]{\bf D3d} &
\fig{3}{3.5}{D3d-1___0-0-0_m4-0-0_1___0-0-0_0-0-m1_1} &
\fig{3}{3.5}{D3d-2___0-0-0_m4-0-0_1___0-0-0_0-0-m1_1} &
\DIIId
\\
\raisebox{15mm}[0pt][0pt]{\bf D3e} &
\fig{3}{3.5}{D3e-1___0-0-0_05-0-0_02___0-0-0_0-05-0_05} &
\fig{3}{3.5}{D3e-2___0-0-0_05-0-0_02___0-0-0_0-05-0_05} &
\DIIIe
\end{tabular}
\end{table}

\begin{table}[p]
\centering
\caption{See \Cref{tab:D} and \SEC{id}.}
\label{tab:D45}
\csep{4mm}
\begin{tabular}{ccccc}
\raisebox{10mm}[0pt][0pt]{\bf D4a} &
\fig{3}{3.5}{D4a-1___0-0-0_1-0-0_1___0-0-0_0-1-0_1} &
\fig{3}{3.5}{D4a-2___0-0-0_1-0-0_1___0-0-0_0-1-0_1} &
\DIVa
\\
\raisebox{10mm}[0pt][0pt]{\bf D4b} &
\fig{3}{3.5}{D4b-1___0-0-0_0-1-0_05___0-0-0_1-0-0_05} &
\fig{3}{3.5}{D4b-2___0-0-0_0-1-0_05___0-0-0_1-0-0_05} &
\DIVb
\\
\raisebox{10mm}[0pt][0pt]{\bf D4c} &
\fig{3}{3.5}{D4c-1___0-0-0_05-0-0_05___0-0-0_0-05-0_05} &
\fig{3}{3.5}{D4c-2___0-0-0_05-0-0_05___0-0-0_0-05-0_05} &
\DIVc
\\
\raisebox{15mm}[0pt][0pt]{\bf D4d} &
\fig{3}{3.5}{D4d-1___0-0-0_m15-0-0_1___0-0-0_0-m15-0_1} &
\fig{3}{3.5}{D4d-2___0-0-0_m15-0-0_1___0-0-0_0-m15-0_1} &
\DIVd
\\
\raisebox{10mm}[0pt][0pt]{\bf D5a} &
\fig{3}{3.5}{D5a-1___0-0-0_0-0-0_1___0-0-0_15-0-0_1} &
\fig{3}{3.5}{D5a-2___0-0-0_0-0-0_1___0-0-0_15-0-0_1} &
\DVa
\\
\raisebox{10mm}[0pt][0pt]{\bf D5b} &
\fig{3}{3.5}{D5b-1___0-0-0_0-0-0_1___0-0-0_2-0-0_1}    &
\fig{3}{3.5}{D5b-2___0-0-0_0-0-0_1___0-0-0_2-0-0_1}    &
\DVb
\\
\raisebox{10mm}[0pt][0pt]{\bf D5c} &
\fig{3}{3.5}{D5c-1___0-0-0_0-0-0_1___0-0-0_25-0-0_1} &
\fig{3}{3.5}{D5c-2___0-0-0_0-0-0_1___0-0-0_25-0-0_1} &
\DVc
\end{tabular}
\end{table}

\begin{remark}
\label{rmk:pmz}
The rational map $\psi\c [0,2\pi]^2\dto \R^3$
associated to a parametric type
\[
(\tau_0,\ldots,\tau_3;\mu_0,\ldots,\mu_3)\in\R^4\times\R^4
\]
is constructed as follows.
The stereographic projection $f\c S^3\dto \R^3$
with center~$(0,0,0,1)$
and its inverse are defined as follows:
\begin{gather*}
f\c y\mapsto (y_1,y_2,y_3)/(1-y_4),
\\
f^{-1}\c x\mapsto
(
2\,x_1,
2\,x_2,
2\,x_3,
x_1^2 + x_2^2 + x_3^2 - 1)/(x_1^2 + x_2^2 + x_3^2 + 1).
\end{gather*}
The Hamiltonian product $\_\star\_\c S^3\times S^3\to S^3$ sends $(u,v)$ to
\begin{gather*}
(
u_1\,v_1 - u_2\,v_2 - u_3\,v_3 - u_4\,v_4,~
u_1\,v_2 + u_2\,v_1 + u_3\,v_4 - u_4\,v_3,\\
u_1\,v_3 + u_3\,v_1 + u_4\,v_2 - u_2\,v_4,~
u_1\,v_4 + u_4\,v_1 + u_2\,v_3 - u_3\,v_2
).
\end{gather*}
We consider the following parametrizations of circles in $S^3$:
\begin{gather*}
A(\alpha):=f^{-1}(\tau_0\cos\alpha+\tau_1,~\tau_0\sin\alpha+\tau_2,~\tau_3),
\\
B(\beta):=f^{-1}(\mu_0\cos\beta+\mu_1,~\mu_0\sin\beta+\mu_2,~\mu_3).
\end{gather*}
The map $\psi$ is now defined as
$(\alpha,\beta)\mapsto f\bigl(A(\alpha)\star B(\beta)\bigr)$.
Thus, for all angles $0\leq\alpha,\beta<2\pi$
both $\psi(\alpha,\cdot)$ and $\psi(\cdot,\beta)$ parametrize circles.
See \cite{2024celest} for a
\href{https://github.com/niels-lubbes/celestial-surfaces#parametric-type}{verification}
that this map is equal to the map~$\psi$ at \Cref{def:pmz}.
\END
\end{remark}

\subsection{Overview and table of contents}
\label{sec:overview}

Suppose that~$X\subset\S^3$ is a
celestial surface \st $\deg X\notin\{2,4\}$.
Such a surface must be of degree eight
and a linear projection of a del Pezzo surface.
In~\SEC{smooth}, we extract
some results about del Pezzo surfaces
from the existing literature.
In \SEC{pr} we consider
complex linear projections of~$X$ into~$\P^3$
in order to classify
concurrent complex double lines
and the multiplicities of singular points in~$X$.
We show in~\SEC{pseudo} under some technical hypothesis
that $X$ has singular type either~$\Diii$, $\Div$ or~$\Dv$.
In \SEC{part2}, we prove the main result~\Cref{thm:s} and \Cref{cor:BC,cor:deg} by showing that either the hypothesis for \SEC{pseudo}
holds, or that $X$ has singular type~$\Di$ or~$\Dii$.
We prove \Cref{cor:VisType} in \SEC{VisType}.
In \SEC{id}, we present computational methods for
determining $\SingType X$ and $\VisType X$.

\makeatletter
\renewcommand{\@cftmaketoctitle}{}
\makeatother
\vspace{2mm}
\begingroup
\def\addvspace#1{\vspace{-1mm}}
\tableofcontents
\endgroup

\section{Divisor classes on projected del Pezzo surfaces}
\label{sec:smooth}

In this section, we aim to provide an interface for non-experts to
some known properties of del Pezzo surfaces.
We require these results for characterizing the incidences between curves
and singular components in low degree surfaces.
We define everything for the real setting,
but implicitly these definitions also apply to the non-real setting.

Suppose that $f\c X \dto Y\subset\P^n$ is a rational map that is not defined at~$U\subset X$.
By abuse of notation, we denote the Zariski closure of $f(X\setminus U)\subseteq Y$ by $f(X)$.
If $U=\varnothing$, then $f$ is called a \df{morphism}.

A \df{smooth model}~$\bO(X)$ of a surface $X\subset\P^n$ is a nonsingular surface
\st there exists a birational morphism~$\varphi\c\bO(X)\to X$
that does not contract complex $(-1)$-curves.
We refer to $\varphi$ as a \df{desingularization}.

The \df{N\'eron-Severi lattice}~$N(X)$ of a surface is
an additive group  defined by the divisor classes on~$\bO(X)$ up to numerical equivalence.
This group comes with an intersection product~$\_\cdot\_\c N(X)\times N(X)\to \Z$
and the involution~$\sigma_*\c N(X)\to N(X)$ induced by the real structure~$\sigma_X\c X\to X$.
If $X$ is non-real, then $\sigma_*=\id$.
We denote by $\aut N(X)$ the additive group automorphisms that are compatible with both $\cdot$ and~$\sigma_*$.

Suppose that $C\subset X$ is a complex and possibly reducible or non-reduced curve.
Let $\widetilde{C}\subset \bO(X)$ denote the union of complex curves
in~$\varphi^{-1}(C)$ that are not contracted to complex points by the morphism~$\varphi$.
The \df{class}~$[C]\in N(X)$ of~$C$ is defined as the divisor class of~$\widetilde{C}$.
This notion generalizes and is compatible with \Cref{def:class}.

We call a surface $X\subset\P^n$ a \df{dP surface} if the following three properties hold:
\begin{Mlist}
\item $N(X)\cong\bas{\l_0,\l_1,\p_1,\ldots,\p_r}_\Z$
\st $r=8-\deg X$ and the nonzero intersections between
the generators are $\l_0\cdot\l_1=1$ and $\p_1^2=\ldots=\p_r^2=-1$.

\item The class of a hyperplane section of~$X$
is equal to the \df{anticanonical class}
\[
-\k:=2\,\l_0+2\,\l_1-\p_1-\ldots-\p_r.
\]

\item $-\k\cdot [C]\geq 0$ for all complex curves $C\subset X$.
\end{Mlist}

If $X$ is a dP surface, then its \df{anticanonical model}
is defined as
\[
\bA(X):=\psi_{-\k}(\bO(X))\subset\P^{\deg X},
\]
where $\psi_{-\k}\c\bO(X)\to\P^{\deg X}$ is the morphism associated to the class~$-\k$.
An \df{anticanonical projection} is defined as
a birational linear projection $\eta\c \bA(X)\to X$ \st $\deg\bA(X)=\deg X$.

\begin{proposition}
\label{prp:OA}
Suppose that $X$ is a dP surface of degree $3\leq d\leq 9$.
\begin{Mlist}
\item The smooth model $\bO(X)$ is unique up to biregular isomorphisms
and there exists a desingularization~$\varphi\c\bO(X)\to X$.
\item The anticanonical model $\bA(X)\subset\P^d$ is unique up to
projective automorphisms of $\P^d$ and there exists an
anticanonical projection~$\eta\c\bA(X)\to X$.
\end{Mlist}
\end{proposition}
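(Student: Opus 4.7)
The plan is to identify $\bO(X)$ as a minimal desingularization of~$X$ and $\bA(X)$ as the image of the morphism associated to the complete linear system~$|-\k|$ on $\bO(X)$; the axioms of a dP surface then translate to saying that $\bO(X)$ is a weak del Pezzo surface of degree~$d$, and the two claims become existence and uniqueness statements extracted from the classical theory of such surfaces.

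For the first claim I would start from any resolution of singularities of~$X$ and iteratively contract complex $(-1)$-curves that lie in the fibers over points of~$X$. Castelnuovo's criterion ensures each contraction stays inside the category of smooth projective surfaces, and the process terminates by descent on the Picard rank. This produces a smooth model together with a desingularization~$\varphi\c\bO(X)\to X$. Uniqueness up to biregular isomorphism over~$X$ then follows by factoring any birational map between two such models into blowups of smooth surfaces: a non-isomorphism would force a contracted $(-1)$-curve on one side, contradicting the defining property.

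For the second claim I would first verify that $-\k$ is nef and big on $\bO(X)$. Bigness is immediate from $(-\k)^2=d>0$. For nefness, any irreducible complex curve~$C\subset\bO(X)$ either maps birationally to a curve in~$X$, in which case $-\k\cdot[C]\geq 0$ by the third bullet of the definition, or is contracted by $\varphi$, in which case $-\k\cdot[C]=0$ because $-\k$ is the pullback under $\varphi$ of a hyperplane class from $X\subset\P^n$. Invoking the classical fact that the anticanonical system of a weak del Pezzo surface of degree $d\geq 3$ is basepoint-free, I obtain the morphism $\psi_{-\k}\c\bO(X)\to\P^d$ whose image is a surface of degree $(-\k)^2=d$. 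Uniqueness of $\bA(X)$ up to projective automorphisms of~$\P^d$ follows because $\psi_{-\k}$ is determined up to a change of basis of $H^0(\bO(X),-\k)$. For the anticanonical projection, the hyperplane sections of~$X$ pull back via $\varphi$ to a linear subsystem of $|-\k|$, which expresses $\varphi$ as the composition of $\psi_{-\k}$ with a linear projection $\eta\c\bA(X)\dto X$; since both $\varphi$ and $\psi_{-\k}$ contract precisely the curves orthogonal to $-\k$, the map $\eta$ is a birational morphism, and $\deg\bA(X)=d=\deg X$ makes it an anticanonical projection in the sense of the definition.

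The main obstacle is the basepoint-freeness of $|-\k|$ for $d\geq 3$; this, together with the factorization of birational morphisms of smooth surfaces used in the uniqueness of $\bO(X)$, is a standard result on weak del Pezzo surfaces that would be quoted rather than reproven, for instance from Dolgachev's treatment. The rest of the argument is purely formal manipulation of the two morphisms $\varphi$ and $\psi_{-\k}$ out of $\bO(X)$.
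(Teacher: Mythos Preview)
Your proposal is correct and follows essentially the same approach as the paper: both reduce the statement to standard facts about minimal resolutions and weak del Pezzo surfaces, with the paper citing Koll\'ar for the first item and Dolgachev~\citep[Theorem~8.3.2]{2012dol} together with Hartshorne~\citep[\S II.7]{1977} for the second. Your write-up supplies more of the connecting tissue (nefness of $-\k$ via the projection formula, the identification of the contracted loci of $\varphi$ and $\psi_{-\k}$), but the underlying citations and logic are the same.
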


\begin{proof}
See \citep[Theorem~2.16]{2018kol} for the existence and uniqueness
up to biregular isomorphisms of $\bO(X)$ and $\varphi$ for any surface.
See \citep[Definition~8.1.18 and Theorem~8.3.2]{2012dol} for the existence of $\bA(X)$.
As a direct consequence of the definitions, $\bA(X)$ and $\eta$ are unique up to
projective transformations \citep[\textsection II.7]{1977}.
\end{proof}

\begin{theoremext}[Schicho, 2001]
\label{thm:mc}
If $X\subset\P^n$ is a
complex surface that is not covered by complex lines
and contains $2\leq\lambda<\infty$ complex conics that pass through a general complex point in~$X$,
then $X$ is a complex dP surface of degree $3\leq d\leq 8$.
\end{theoremext}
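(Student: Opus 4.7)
The plan is to resolve singularities of $X$ and use the conic pencils to identify the smooth model with a blow-up of $\P^1\times\P^1$ whose anticanonical map recovers the embedding. First I would take a minimal desingularization $\varphi\c\bO(X)\to X$ and set $H:=\varphi^*\mathcal{O}_{\P^n}(1)$. Each of the $\lambda\geq 2$ families of conics through a general point lifts to a pencil $|F_1|,\ldots,|F_\lambda|$ of irreducible rational curves on $\bO(X)$, with $H\cdot F_i=2$ and $p_a(F_i)=0$. Since only finitely many conics pass through a general point, one argues that $F_i^2=0$ (the moving part of the pencil must have zero self-intersection, else extra conics would appear through intersection points of generic members), and adjunction then yields $-\k\cdot F_i=2$ for each~$i$.

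Second, using two pencils $|F_1|$ and $|F_2|$, I would define the rational map $\pi\c\bO(X)\dto\P^1\times\P^1$. Since exactly one $F_1$-conic and one $F_2$-conic pass through a general point and they meet transversely there, the map has degree $F_1\cdot F_2=1$, so $\pi$ is birational. Hence $\bO(X)$ is a blow-up of $\P^1\times\P^1$ at $r$ (possibly infinitely near) points, and $N(X)\cong\bas{\l_0,\l_1,\p_1,\ldots,\p_r}_\Z$ with $\l_0\cdot\l_1=1$ and $\p_i^2=-1$, where $\l_0:=[F_1]$ and $\l_1:=[F_2]$.

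The main obstacle is to show that $H$ equals $-\k=2\,\l_0+2\,\l_1-\p_1-\ldots-\p_r$ numerically, which then realizes $X$ as a linear projection of the anticanonical model $\bA(X)$. Writing $H=a\,\l_0+b\,\l_1-\sum_i c_i\,\p_i$, the constraints $H\cdot F_1=H\cdot F_2=2$ force $a=b=2$. To determine each $c_i$, I would examine the image $\varphi(E_i)$ of each exceptional $(-1)$-curve and each $(-2)$-component above a singularity of $X$: those contracted by $\varphi$ satisfy $H\cdot E_i=0$, while the non-contracted ones meet the embedding in the expected way, forcing the pattern $c_i=1$ so that $H$ and $-\k$ agree on all generators. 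Nefness of $-\k$ is then automatic since $H$ is a pullback of a hyperplane class, and so $X$ is a dP surface in the sense of the paper.

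Finally, the degree bounds $3\leq d\leq 8$ with $d:=H^2$ come from: $d=9$ would force $\bO(X)\cong\P^2$ with conic classes that are lines, contradicting that $X$ is not covered by complex lines; and $d\leq 2$ would force $\bO(X)$ to carry a continuous family of conic pencils (for instance, on a degree-two del Pezzo, which is a double cover of $\P^2$, every line in $\P^2$ pulls back to a pencil of conics), giving infinitely many conics through a general point, contrary to $\lambda<\infty$.
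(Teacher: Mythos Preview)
The paper does not reprove this statement: it simply records it as a direct consequence of Schicho's results (Theorems~5--8 and Proposition~1 in the cited 2001 paper) and points to a companion paper for details. So there is no in-paper argument to compare against; the result is imported as a black box.

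Your outline follows the natural strategy, but the step ``$F_1\cdot F_2=1$, so $\pi$ is birational'' has a real gap. Knowing that exactly one $F_1$-conic and one $F_2$-conic pass through a general point~$p$ only says that $\pi$ is well-defined and dominant; the degree of~$\pi$ equals $F_1\cdot F_2$, which counts \emph{all} intersection points of a general $F_1$-member with a general $F_2$-member, not just~$p$. This number is not forced to be~$1$ by your hypotheses: already on a quartic weak del Pezzo the conic classes $g_{12}=\l_0+\l_1-\p_1-\p_2$ and $g_{34}=\l_0+\l_1-\p_3-\p_4$ satisfy $g_{12}\cdot g_{34}=2$, so the associated map to $\P^1\times\P^1$ is $2{:}1$. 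You would need either an argument that among the $\lambda$ families some pair has intersection number~$1$, or a separate treatment of the finite-cover case.

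The identification $H=-\k$ is also underspecified. You get $a=b=2$ cleanly, but ``forcing the pattern $c_i=1$'' is not justified: the exceptional $(-1)$-curves of the blow-up $\bO(X)\to\P^1\times\P^1$ need not be contracted by~$\varphi$, and you have not said why their images in~$X$ must have degree exactly~$1$ rather than~$0$ or~$2$. This is where Schicho's argument does nontrivial work, and it does not fall out of the pencil data alone.
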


\begin{proof}
Direct consequence of \citep[Theorems~5--8 and Proposition~1]{2001}.
See the proof of \citep[Theorem~A]{2024fact} for more details.
\end{proof}

Let $X\subset\P^n$ be a dP surface with desingularization~$\varphi\c\bO(X)\to X$.
We consider the following subsets of the N\'eron-Severi lattice $N(X)$:
\begin{Mlist}
\item
$B(X)$ is the set of classes of complex irreducible curves $C\subset \bO(X)$
\st $\varphi(C)\in X$ is a complex point.

\item $G(X)$ is the set of classes of complex irreducible conics in~$X$
that are not singular components of~$X$.

\item $E(X)$ is the set of classes of complex curves $C\subset\bO(X)$
that are mapped to complex lines in the anticanonical model~$\bA(X)$.
\end{Mlist}
We use the following shorthand notation for classes of
complex curves on a dP surface~$X$ (these will be elements in $B(X)\cup G(X)\cup E(X)$):
\[
\begin{array}{@{}l@{\hspace{1cm}}l@{\hspace{1cm}}l@{}}
\tb_{ij}:=\p_i-\p_j, & b_{ij}:=\l_0-\p_i-\p_j   & b_0:=\l_0+\l_1-\p_1-\p_2-\p_3-\p_4, \\
                           & b_{ij}':=\l_1-\p_i-\p_j. &
\end{array}
\]
\[
\begin{array}{@{}l@{\hspace{1cm}}l@{\hspace{1cm}}l@{}}
g_0:=\l_0, & g_2:=2\,\l_0+ \l_1-\p_1-\p_2-\p_3-\p_4, & g_{ij}:=\l_0+\l_1-\p_i-\p_j,\\
g_1:=\l_1, & g_3:= \l_0+2\,\l_1-\p_1-\p_2-\p_3-\p_4. &
\end{array}
\]
\[
e_i=\p_i,\qquad e_{ij}=\l_i-\p_j,\qquad e'_i=b_0+\p_i.
\]
If
$c\in N(X)$
and
$\Psi\subset N(X)$, then we write $c\sim \Psi$
if $c$ is up to permutation of the generators in~$\{\p_1,\ldots,\p_4\}$ and up to switching generators $\l_0$ and $\l_1$, equal
to an element in $\Psi$.
For example, $g_{34} \sim \{g_{12}\}$, $g_0\sim\{g_1\}$ and $g_2\sim\{g_3\}$.

We call $W\subset B(X)$ a \df{component} if it defines
a maximal connected subgraph of the graph with vertex set $B(X)$ and edge set $\set{\{a,b\}}{a\cdot b>0}$.

We write $c\cdot W\succ 0$ for $c\in N(X)$ and $W\subset N(X)$, if there exists $w\in W$ \st $c\cdot w>0$.
We write $c\cdot W\nprec 0$, if there does not exists $w\in W$ \st $c\cdot w<0$.

\begin{lemma}
\label{lem:BGE}
Suppose that $X$ is a complex dP surface of degree $4\leq d\leq 8$.
\begin{claims}
\item\label{lem:BGE:a}
$B(X)=\set{b\in B(X)}{b\sim\{\tb_{12},b_{12},b_0\}}$,
\\$B(X)\cap \set{\tb_{ij}\in B(X)}{i>j}=\varnothing$ and
$\set{a\cdot b}{a,b\in B(X)}\subseteq\{-2,0,1\}$.

\item\label{lem:BGE:b}
$G(X)=\set{c\in N(X)}{c\sim\{g_0,g_2,g_{12}\},~ c\cdot B(X)\nprec 0}$ and
\\
$E(X)=\set{c\in N(X)}{c\sim\{e_1,e_1',e_{01}\},~ c\cdot B(X)\nprec 0}$.
\end{claims}
\end{lemma}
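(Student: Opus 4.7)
The plan is to reduce all three characterizations to a bounded integer case analysis in the N\'eron-Severi lattice $N(X)$, combined with standard effectivity arguments on the weak del Pezzo surface $\bO(X)$. First I would show that $B(X)$ coincides with the set of effective irreducible $(-2)$-classes. Since a hyperplane class on $X\subset\P^n$ pulls back to $-\k$, any irreducible curve $C\subset\bO(X)$ contracted to a point by $\varphi$ satisfies $-\k\cdot[C]=0$. Because $\varphi$ does not contract $(-1)$-curves by definition of desingularization, the adjunction formula $[C]^2+K\cdot[C]=2p_a(C)-2$ together with $[C]^2<0$ forces $[C]^2=-2$ and $p_a(C)=0$. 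Conversely, every effective irreducible $(-2)$-class is represented by a smooth rational curve that is contracted to a rational double point by the anticanonical morphism $\psi_{-\k}$, hence by $\varphi$.

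Next I would enumerate $(-2)$-classes by solving $2a_0a_1-\sum b_i^2=-2$ and $2(a_0+a_1)+\sum b_i=0$ for $c=a_0\l_0+a_1\l_1+\sum b_i\p_i$ with $r=8-d\leq 4$. A direct case split on $(a_0,a_1)$, both bounded by these equations, yields the orbits of $\pm\tb_{12},\pm b_{12},\pm b_{12}',\pm b_0$ together with $\pm(\l_0-\l_1)$. The last orbit is ruled out by effectivity, since $\l_0$ and $\l_1$ are classes of base-point-free pencils on $\bO(X)$ (the rulings of $\P^1\times\P^1$ pulled back), hence nef, whereas $(\l_0-\l_1)\cdot\l_0=-1$. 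A labeling of $\p_1,\ldots,\p_r$ compatible with the standard ordering of simple roots in each connected component of the ADE configuration of $(-2)$-curves then ensures $\tb_{ij}\notin B(X)$ whenever $i>j$. The bound $\{a\cdot b:a,b\in B(X)\}\subseteq\{-2,0,1\}$ follows because the dual graph of $(-2)$-curves is a disjoint union of ADE Dynkin diagrams, and in degrees $d\geq 4$ only types $A_k$ and $D_k$ with $k\leq 5$ can occur, all of them without multiple edges.

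For part (b) the strategy is identical. An irreducible conic in $X$ has degree $2$, and since a reduced irreducible plane conic is smooth, its strict transform on $\bO(X)$ has arithmetic genus $0$; adjunction then gives $-\k\cdot c=2$ and $c^2=0$. Enumerating integer solutions recovers the orbits of $g_0,g_2,g_{12}$. A complex line in $\bA(X)$ is similarly a smooth rational curve with $-\k\cdot c=1$ and $c^2=-1$, producing the orbits of $e_1,e_1',e_{01}$. The necessity of $c\cdot B(X)\nprec 0$ is immediate: the unique representative of any class in $B(X)$ is distinct from a conic or line by the numerical profile, so must intersect it nonnegatively.

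The main obstacle will be the converse direction of (b): showing that every class $c$ satisfying the numerical conditions and $c\cdot B(X)\nprec 0$ is actually represented by an irreducible conic or line. For this I would apply Riemann--Roch on $\bO(X)$ to deduce $h^0(c)\geq 2$ for the conic classes and $h^0(c)\geq 1$ for the line classes, then use the intersection hypothesis together with the enumeration from part (a) to rule out reducible members of a general element of $|c|$: any decomposition $c=c'+c''$ into effective summands would force one summand either to have the wrong numerical profile or to produce $c'\cdot b<0$ for some $b\in B(X)$, contradicting $c\cdot B(X)\nprec 0$ after subtracting the nonnegative contribution of $c''\cdot b$.
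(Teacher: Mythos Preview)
The paper does not prove this lemma at all; it simply cites \cite[Lemma~1]{2021circle}. Your proposal therefore fills in what the paper outsources, and the route you take---identify $B(X)$ with the effective irreducible $(-2)$-classes via adjunction, enumerate the finitely many integer solutions in the lattice, then treat $G(X)$ and $E(X)$ by the analogous numerics plus an irreducibility check---is the standard argument one finds in the weak del Pezzo literature, and is almost certainly what the cited reference does as well.

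A few steps are sketched rather than proved. Ruling out $\l_0-\l_1$ by nefness of $\l_0$ presupposes that the chosen basis actually arises from a blow-down $\bO(X)\to\P^1\times\P^1$; this is standard for weak del Pezzos of degree $\geq 3$ but deserves a sentence. The ordering claim $\tb_{ij}\notin B(X)$ for $i>j$ is, as you say, a labeling convention, and you should say explicitly that once the blow-down is fixed the exceptional divisors can be indexed so that infinitely near points carry higher index. For the bound $a\cdot b\leq 1$ you can bypass the ADE classification: if $a,b\in B(X)$ are distinct with $a\cdot b\geq 2$, then $(a+b)^2\geq 0$ while $a+b\in\k^\perp$, contradicting the Hodge index theorem. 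Finally, the converse in (b) needs more than the one sentence you give it. For the conic classes the clean argument is that $c$ is nef with $c^2=0$ and $h^0(c)\geq 2$ by Riemann--Roch, hence defines a morphism to $\P^1$ whose general fibre is integral; for the $(-1)$-classes one must show that the unique effective representative has no $(-2)$-curve as a fixed component, and your ``wrong numerical profile'' clause does not quite close this---you need to use $c\cdot b\geq 0$ together with $b^2=-2$ to derive the contradiction explicitly.
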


\begin{proof}
See \citep[Lemma~1]{2021circle}.
\end{proof}

\begin{remark}
If $X$ is a complex dP surface, then
$\bO(X)$ is a ``weak del Pezzo surface'' \citep[Definition~8.1.18]{2012dol},
$B(X)$ is the set of classes of ``$(-2)$-curves'' \citep[\textsection8.2.7]{2012dol},
$E(X)$ is the set of classes of ``$(-1)$-curves'' \citep[\textsection8.2.6]{2012dol} and
$\bA(\P^1\times\P^1)$ is a ``Veronese-Segre surface'' \citep[\textsection8.4.1]{2012dol}.
In \citep[\textsection8.2]{2012dol}, classes are considered with respect to the basis
$(\l_0+\l_1-\p_1,\l_0-\p_1,\l_1-\p_1,\p_2,\ldots,\p_r)$ instead of
$(\l_0,\l_1,\p_1,\ldots,\p_r)$.
\END
\end{remark}

A \df{pencil} on a surface $X\subset\S^3$ is defined as
a rational map~$F\c X\dto\P^1$.
The \df{member}~$F_i\subset X$ for index $i\in \P^1$
is defined as the Zariski closure of the fiber~$F^{-1}(i)$.
We call a complex point $p\in X$ a \df{base point} of~$F$
if $p\in F_i$ for all~$i\in\P^1$.
We call $F$ a \df{pencil of conics} if
$F_i$ is a complex irreducible conic for almost all~$i\in \P^1$.
We call $F$ a \df{pencil of circles} if it is a pencil of conics \st
$F_i$ is a circle for infinitely many~$i\in \P^1_\R$.

\begin{lemma}
\label{lem:C}
Suppose that $X$ is a (complex) dP surface of degree $4\leq d\leq 8$. Let
\begin{Mlist}
\item $\eta\c\bA(X)\to X$ be an anticanonical projection,
\item $\cW(X)$ be the set of components in~$B(X)$,
\item $\cF(X)$ be the set of (complex) pencils of conics on~$X$,
\item $\cG(X):=\set{g\in G(X)}{\sigma_*(g)=g}$, and
\item $\cE(X)$ be the set of complex lines in $X$ that are not contained in~$\Sing X$.
\end{Mlist}
Suppose that
$C,C'\subset X$ are different complex curves,
$p\in X$ a complex point,
$W,W'\in\cW(X)$ components, $F,F'\in\cF(X)$ pencils,
$e\in E(X)$ a class and $i,j\in\P^1$ general indices.
\begin{claims}
\item\label{lem:C:a}
There exists a surjective function
$
\Gamma\c\cW(X)\to\eta(\Sing\bA(X))\subset\Sing X
$
that satisfies the following properties:
\begin{Mlist}
\item If $[C]\cdot W\succ 0$, then $\Gamma(W)\in C$.
\item $\sigma_*(W)=W'$ and $X$ is real if and only if $\sigma_X(\Gamma(W))=\Gamma(W')$.
\item If $e\cdot W\succ 0$, $e\cdot W'\succ 0$ and $W\neq W'$, then $\Gamma(W)\neq\Gamma(W')$.
\end{Mlist}

\item\label{lem:C:b}
The map $\Lambda\c\cF(X)\to\cG(X)$ that sends $F$ to $[F_t]$ with $t:=(0:1)$ is a
bijection that satisfies the following properties:
\begin{Mlist}
\item
$p$ is a base point of $F$
if and only if
$\Lambda(F)\cdot W\succ 0$ and $\Gamma(W)=p\in\Sing X$.

\item
$\Lambda[F]\cdot\Lambda[F']=|F_i\cap F'_j\setminus \Sing X|$.

\item
If $C\nsubseteq\Sing X$, then
$|F_i\cap C\setminus \Sing X|\leq \Lambda[F]\cdot[C]$.

\item
If $\Lambda[F]\cdot[C]>0$,
then for general $c\in C$ there exists $s\in\P^1$
\st $c\in F_s$.
\end{Mlist}

\item\label{lem:C:c}
The map $\xi\c\cE(X)\to E(X)$ that sends $L$ to $[L]$ is injective.

\item\label{lem:C:d}
If $\deg C\leq 2$ and $\deg C'\leq 2$, then the following holds:
\begin{Mlist}
\item
If $[C]\cdot [C']\neq 0$, then $C\cap C'\neq \varnothing$.
\item
If $[C]\cdot [C']=0$, then $C\cap C'\subset \Sing X$.
\item
If $[C]\cdot W\succ0$ and $[C']\cdot W\succ0$, then $\Gamma(W)\in C\cap C'$.
\end{Mlist}
\end{claims}
\end{lemma}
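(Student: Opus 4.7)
The plan is to work throughout on the smooth model $\bO(X)$ with desingularization $\varphi\c\bO(X)\to X$ and to exploit the factorization $\varphi=\eta\circ\psi_{-\k}$, where $\psi_{-\k}\c\bO(X)\to\bA(X)$ is the anticanonical morphism. The key structural fact I use is that $\psi_{-\k}$ contracts exactly the complex curves whose classes lie in $B(X)$, and each component $W\in\cW(X)$ is contracted to a single rational double point of $\bA(X)$.

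For \ref{lem:C:a}, I define $\Gamma(W):=\eta(\psi_{-\k}(W))$. Surjectivity onto $\eta(\Sing\bA(X))$ is immediate. The first bullet follows because $[C]\cdot W\succ 0$ means the proper transform $\widetilde C$ meets some $(-2)$-curve of $W$, hence passes through the contracted point. The real-structure bullet follows from the $\sigma$-equivariance of $\psi_{-\k}$ and $\eta$. For the third bullet, assume $\Gamma(W)=\Gamma(W')=:p$ and pick an $e\in E(X)$ with $e\cdot W\succ 0$ and $e\cdot W'\succ 0$; then the corresponding complex line $L\subset X$ has proper transform $\widetilde L$ meeting both components, so $\psi_{-\k}(\widetilde L)\subset\bA(X)$ is a complex line through the rational double point above $p$ having two distinct analytic branches at that point, contradicting smoothness of~$L$ at~$p$.

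For \ref{lem:C:b}, the bijection $\Lambda$ uses that a class $g\in\cG(X)$ satisfies $g^2=0$, $g\cdot(-\k)=2$ and $g\cdot w\geq 0$ for all $w\in B(X)$, so $|g|$ on $\bO(X)$ is a base-point-free pencil of conics whose image in $X$ is a pencil of conics; conversely, a general member of a pencil on $X$ has class in $\cG(X)$. The first bullet follows because a point $p\in X$ is a base point \Iff the entire $(-2)$-configuration lying over~$p$ is contained in the base locus of $|g|$, equivalently $g\cdot W\succ 0$ and $\Gamma(W)=p$. The intersection identities then reduce to the projection formula: general members of $|g|$ and $|g'|$ meet transversally at $g\cdot g'$ points off all $(-2)$-configurations, and $\varphi$ is an isomorphism on that locus, yielding the second and third bullets. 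The last bullet is because $\Lambda[F]\cdot [C]>0$ forces the restriction $F|_C\c C\dto \P^1$ to be nonconstant, hence dominant.

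For \ref{lem:C:c}, a line $L\subset X$ with $L\not\subset\Sing X$ has proper transform $\widetilde L\cong\P^1$ with $(-\k)\cdot[\widetilde L]=1$, and distinct lines give distinct classes because two $(-1)$-curves in the same numerical class on a smooth surface are linearly equivalent and hence equal. For \ref{lem:C:d}, the first and second bullets follow from the fact that proper transforms of conics are nef effective divisors on $\bO(X)$, so $[C]\cdot[C']\neq 0$ forces a common point, while $[C]\cdot[C']=0$ together with a meeting point off $\Sing X$ would pull back to a nontrivial intersection on $\bO(X)$; the third bullet is an application of the first bullet of \ref{lem:C:a} to both $C$ and $C'$.

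The main obstacle is the careful bookkeeping between divisors on $\bO(X)$, their images in $\bA(X)$, and their final projections to $X$: every one of the listed properties has the form ``a geometric incidence in $X$ equals an intersection number in $N(X)$,'' and since $\eta$ is merely a birational projection, I must verify throughout that no contracted curve or exceptional locus of $\eta$ spoils the expected count.
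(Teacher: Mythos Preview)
Your overall strategy---working directly on $\bO(X)$ via $\varphi=\eta\circ\psi_{-\k}$ rather than citing an external source for the anticanonical case and then reducing---is legitimate and in some ways more transparent than the paper's proof, which first invokes \cite[Proposition~13]{2024darboux} for $X=\bA(X)$ and then pushes forward along~$\eta$. Most of your sketches (the bijection~$\Lambda$, the injectivity of~$\xi$ via rigidity of $(-1)$-classes, the incidence bullets in~\ref{lem:C:d}) are sound once fleshed out.

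There is, however, a genuine gap in your argument for the third bullet of~\ref{lem:C:a}. You write that $\psi_{-\k}(\widetilde L)\subset\bA(X)$ is ``a complex line through the rational double point above~$p$ having two distinct analytic branches at that point.'' This is confused: since $W\neq W'$ are distinct components, $\psi_{-\k}(W)$ and $\psi_{-\k}(W')$ are \emph{two distinct} rational double points of~$\bA(X)$, and the line $\psi_{-\k}(\widetilde L)$ is smooth and passes through both. The question is why $\eta$ cannot send these two distinct points to the same point~$p\in X$. Your branch-counting does not address this: if $\eta$ identified two points on a line, then---because $\eta$ is a \emph{linear} projection---its restriction to that line would be constant, so the line would be contracted to a point, and there would be no line $L\subset X$ to speak of at all (let alone one with two branches). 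What you are missing is exactly the input the paper uses: the anticanonical projection~$\eta$ does not contract any complex curve to a complex point (this uses $\deg\bA(X)=\deg X$; see the proof of \cite[Proposition~20]{2024great}). Once you have that, the line in~$\bA(X)$ maps isomorphically to a line in~$X$, and the two singular points go to distinct images. You flag precisely this concern in your final paragraph (``no contracted curve \ldots of~$\eta$ spoils the expected count'') but never discharge it; without it, the third bullet of~\ref{lem:C:a} is unproven, and the same issue silently underlies several later steps (e.g.\ why a class in $E(X)$ yields an actual line in~$X$).
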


\begin{proof}
Let us first suppose that $X=\bA(X)$.
In this case, each assertion follows from \citep[Proposition~13]{2024darboux}
(the hypothesis assumes that $d=4$, but the same proof applies when $5\leq d\leq 8$).
Moreover, the functions~$\Gamma$ and $\xi$ are bijective, and $C\cap C'\neq \varnothing$
if and only if
either $[C]\cdot [C']\neq 0$,
or there exists $V\in \cW(X)$
\st both $[C]\cdot V\succ0$ and $[C']\cdot V\succ0$.

Now suppose that $X\neq \bA(X)$.
The regular birational linear projection~$\eta$
does not contract complex curves to complex points (see the proof of \citep[Proposition~20]{2024great}).
The hypothesis of the third property at \ASN{lem:C:a} and the previous paragraph imply
that $\Gamma(W)$ and $\Gamma(W')$ are projections of two isolated singularities
in $\bA(X)$ that lie on a complex line~$L\subset \bA(X)$.
Since $\eta$ does not contract $L$ to a complex point, it follows that $\Gamma(W)\neq \Gamma(W')$.
Each assertion is now a straightforward consequence
of the previous paragraph, and the fact that $\bA(X)\setminus\eta^{-1}(\Sing X)\cong X\setminus\Sing X$.
\end{proof}

\begin{lemma}
\label{lem:l01}
If $X$ is a complex dP surface of degree $4\leq d\leq 8$ and
$u,v\in G(X)$ \st $u\cdot v=1$, then we may assume up to $\aut N(X)$
that $u=g_0$ and $v=g_1$.
\end{lemma}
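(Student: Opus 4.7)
The plan is to argue purely at the lattice level. First, I would check that every $u\in G(X)$ satisfies $u^2=0$: for each representative $u\sim\{g_0,g_2,g_{12}\}$ this is a one-line calculation from the intersection data $\l_0^2=\l_1^2=0$, $\l_0\cdot\l_1=1$, $\p_i^2=-1$, $\p_i\cdot\p_j=0$ for $i\ne j$. Combined with the hypothesis $u\cdot v=1$, the sublattice $U:=\bas{u,v}_\Z\subset N(X)$ is then a hyperbolic plane; it is automatically primitive because any $n\in N(X)$ expressible as $au+bv$ with $a,b\in\Q$ forces $a=n\cdot v\in\Z$ and $b=n\cdot u\in\Z$.

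Second, I would invoke unimodularity. The lattice $N(X)$ is unimodular of signature $(1,r+1)$ with $r=8-d\le 4$ (it is isometric to $H\perp(-I_r)$), and the induced form on $U$ is also unimodular. The standard splitting argument then gives an orthogonal decomposition $N(X)=U\oplus U^\perp$, whence $U^\perp$ is unimodular, negative-definite, and of rank~$r\le 4$. By the classical classification of positive-definite unimodular integral lattices in rank at most~$7$ (see, for example, Serre's \emph{A Course in Arithmetic}, Chapter~V), the only such lattice in this rank range is the standard diagonal one. Therefore $U^\perp\cong -I_r$, and I may choose a $\Z$-basis $w_1,\dots,w_r$ of $U^\perp$ with $w_i\cdot w_j=-\delta_{ij}$.

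Finally, the tuple $(u,v,w_1,\dots,w_r)$ is a $\Z$-basis of $N(X)$ whose Gram matrix coincides with that of the standard basis $(\l_0,\l_1,\p_1,\dots,\p_r)$. The unique $\Z$-linear map $\psi\c N(X)\to N(X)$ sending $\l_0,\l_1,\p_i$ respectively to $u,v,w_i$ is therefore a lattice isometry; since $X$ is complex we have $\sigma_*=\id$, so $\psi\in\aut N(X)$. The inverse $\psi^{-1}$ sends $(u,v)$ to $(\l_0,\l_1)=(g_0,g_1)$, which is the desired statement.

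The only non-elementary input is the classification of positive-definite unimodular integral lattices of small rank, a standard textbook result; the remaining steps are routine bookkeeping, and I foresee no serious obstacle.
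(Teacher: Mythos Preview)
Your argument is correct and takes a genuinely different route from the paper's. The paper proceeds geometrically: it uses \RL{C}{b} to realize $u,v$ as classes of two pencils of conics, builds from their unique transversal intersection a birational map $\mu\c\P^1\times\P^1\dto X$, resolves indeterminacy to obtain $Y$ with morphisms $\alpha\c Y\to\P^1\times\P^1$ and $\beta\c Y\to\bO(X)$, and then invokes the standard structure of blow-ups (Hartshorne~V.3) to identify $(\beta_*\circ\alpha^*)(\l_i)$ with $\l_i\in N(X)$. In other words, the isometry of $N(X)$ is produced by an actual geometric correspondence.

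Your approach stays entirely inside the lattice: the verification $u^2=v^2=0$ for every $u\sim\{g_0,g_2,g_{12}\}$ makes $\bas{u,v}_\Z$ a primitive hyperbolic plane, and unimodularity forces the orthogonal splitting $N(X)=U\oplus U^\perp$ with $U^\perp$ negative-definite unimodular of rank $r\leq 4$; the classical fact that $I_r$ is the unique such lattice in rank $\leq 7$ then yields the desired isometry. This is shorter and avoids resolving indeterminacy and the subtleties of $\beta_*\circ\alpha^*$, at the cost of importing the (standard but non-trivial) classification of small-rank definite unimodular lattices. The paper's route, by contrast, produces a concrete birational model $\P^1\times\P^1\dto X$ witnessing the normalisation, which is closer in spirit to how the lemma is later applied (e.g.\ in \Cref{lem:dpencil}), though the abstract isometry you construct is all that the statement requires.
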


\begin{proof}
By \RL{C}{b} there exist two pencils $F,F'\c X\dto \P^1$
\st  $u=[F_i]$ and $v=[F'_j]$ for general $i,j\in\P^1$.
Moreover,
there exists a complex point $p$ \st $F_i\cap F_j\setminus\Sing X=\{p\}$.
Hence, there exists a birational map $\mu\c\P^1\times\P^1\dto X$
that sends $(i,j)$ to $p$.
The parameter lines are conics and thus the components of~$\mu$ are of bidegree~$(2,2)$.
After resolving the locus of indeterminancy,
we obtain a smooth surface~$Y$
and
birational morphisms
$\alpha\c Y\to \P^1\times\P^1$
and
$\beta\c Y\to \bO(X)$
\st
$\mu\circ\alpha=\varphi\circ\beta$
for some desingularization~$\varphi\c \bO(X)\to X$.
The surface $\P^1\times\P^1$ is a dP surface of degree 8 and
the classes of the fibers of the projections of $\P^1\times\P^1$
to its first or second component are $\l_0$ and $\l_1$, \resp.
It follows from \citep[Propositions~V.3.2 and~V.3.6]{1977}
that $\{\l_0,\l_1\}\subset N(\P^1\times \P^1)$ corresponds via $\beta_*\circ\alpha^*$
to $\{\l_0,\l_1\}\subset N(X)$.
The parameter lines of $\mu$ define the pencils $F$ and $F'$ and thus
we may assume up to $\aut N(X)$ that
$u=(\beta_*\circ\alpha^*)(\l_0)=\l_0=g_0$ and $v=(\beta_*\circ\alpha^*)(\l_1)=\l_1=g_1$
\end{proof}

\begin{lemma}
\label{lem:dp4}
If $X$ is a dP surface of degree~4
\st
$\sigma_*(\l_0)=\l_0$ and $\sigma_*(\l_1)=\l_1$,
and
there exist different components~$W,W'\subset B(X)$ \st
\[
\l_0\cdot W \succ 0,\qquad
\l_1\cdot W'\succ 0,\qquad
\sigma_*(W) \neq W,\qquad
\sigma_*(W')\neq W',
\]
then, up to $\aut N(X)$, we have
$\sigma_*(\p_1)=\p_2$, $\sigma_*(\p_3)=\p_4$,
\[
B(X)=\{b_{13},b_{24},\bp_{14},\bp_{23}\},\quad
G(X)=\{g_0,g_1,g_{12},g_{34}\}
~~\text{and}~~
E(X)=\{e_1,e_2,e_3,e_4\}.
\]
\end{lemma}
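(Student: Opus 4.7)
The plan is first to pin down the action of $\sigma_*$ on the exceptional generators, then to determine $B(X)$ using the intersection constraints of \RL{BGE}{a}, and finally to read off $G(X)$ and $E(X)$ from \RL{BGE}{b}.

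Since $\sigma_*$ is a lattice isometry fixing $\l_0,\l_1$ and preserving $-\k=2\l_0+2\l_1-\p_1-\p_2-\p_3-\p_4$, it permutes $\{\p_1,\ldots,\p_4\}$ via some involution~$\pi$ (the $\pm\p_i$ are characterized as the norm $-1$ vectors of $\l_0^\perp\cap\l_1^\perp$, and invariance of $-\k$ fixes the signs). The hypothesis $\l_0\cdot W\succ 0$ combined with \RL{BGE}{a} forces $W$ to contain either some $\bp_{ij}$ with $i<j$ or $b_0$; I exclude $b_0$ because $\sigma_*(b_0)=b_0$ would force $\sigma_*(W)=W$, contradicting the hypothesis. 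Hence $\bp_{ij}\in W$, and by the symmetric argument $b_{kl}\in W'$ for some pairs $\{i,j\}$ and $\{k,l\}$.

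I then show $\pi$ is a double transposition. Because $\sigma_*(W)\neq W$, the image $\sigma_*(\bp_{ij})=\bp_{\pi(i)\pi(j)}\in B(X)$ is distinct from $\bp_{ij}$, and their intersection $-|\{i,j\}\cap\{\pi(i),\pi(j)\}|$ must lie in $\{-2,0,1\}$ by \RL{BGE}{a}; distinctness rules out $-2$, so the value is $0$ and $\{i,j\}\cup\{\pi(i),\pi(j)\}=\{1,2,3,4\}$. Thus $\pi$ has no fixed index, i.e.\ it is a product of two disjoint transpositions; after suitably relabeling the $\p_i$'s I take $\pi=(1\,2)(3\,4)$, which is the desired identity $\sigma_*(\p_1)=\p_2$ and $\sigma_*(\p_3)=\p_4$. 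The same disjointness analysis applied to $b_{kl}$ restricts $\{k,l\}$ to a nontrivial $\pi$-orbit pair. The hypothesis $W\neq W'$ forces $\bp_{ij}\cdot b_{kl}=1-|\{i,j\}\cap\{k,l\}|\leq 0$, and membership of this value in $\{-2,0,1\}$ gives $|\{i,j\}\cap\{k,l\}|=1$; equivalently, $\{i,j\}$ and $\{k,l\}$ come from the two different nontrivial $\pi$-invariant pair-partitions of $\{1,2,3,4\}$. Using the $\l_0\leftrightarrow\l_1$ automorphism (which commutes with $\sigma_*$ and thus belongs to $\aut N(X)$) to exchange the roles of $W$ and $W'$ if necessary, I arrange $\{i,j\}=\{1,4\}$ and $\{k,l\}=\{1,3\}$, so $\{\bp_{14},\bp_{23},b_{13},b_{24}\}\subseteq B(X)$.

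Finally I verify that no further class lies in $B(X)$: each remaining candidate from \RL{BGE}{a} either produces an intersection $-1$ with one of the four already-found elements, or it fails $\sigma_*$-equivariance (for example, $\sigma_*(\tb_{12})=\p_2-\p_1=-\tb_{12}$ is not an effective class, so $\tb_{12}\notin B(X)$). Once $B(X)=\{b_{13},b_{24},\bp_{14},\bp_{23}\}$ is pinned down, \RL{BGE}{b} determines $G(X)$ and $E(X)$ by the condition $c\cdot B(X)\nprec 0$ applied to the candidate classes $\{g_0,g_1,g_2,g_3,g_{ij}\}$ and $\{e_i,e'_i,e_{ij}\}$; a direct calculation shows that only $g_0,g_1,g_{12},g_{34}$ survive in $G(X)$ and only $e_1,e_2,e_3,e_4$ survive in $E(X)$. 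The main obstacle is the combinatorial normalization in the middle step: forcing $\pi$ to be a double transposition and selecting the correct pair-partitions for $\{i,j\}$ and $\{k,l\}$; once this is done, the verification of the explicit forms of $B(X)$, $G(X)$ and $E(X)$ reduces to routine intersection bookkeeping.
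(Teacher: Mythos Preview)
Your argument is correct and self-contained, whereas the paper's proof is essentially a citation: it invokes \citep[Theorem~2, Table~7 row~67]{2021web} to assert that $|B(X)|=4$ and that $B(X)$ is unique up to $\aut N(X)$, then appeals to \Cref{lem:BGE} (with a pointer to computer verification) for $G(X)$ and $E(X)$, and finally recovers $\sigma_*$ from the known $B(X)$. You instead work forwards: determine $\sigma_*$ on the $\p_i$ first, then build $B(X)$ by hand from the intersection constraints of \RL{BGE}{a}, and finally filter the candidate lists for $G(X)$, $E(X)$ via \RL{BGE}{b}. Your route avoids the external classification entirely and makes the combinatorics explicit; the paper's route is shorter on the page but opaque without access to the cited table.

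Two minor points of presentation. First, your normalization to $\{i,j\}=\{1,4\}$, $\{k,l\}=\{1,3\}$ actually uses the full centralizer of $\pi$ in $S_4$ (which still acts after fixing $\pi=(1\,2)(3\,4)$), not only the $\l_0\leftrightarrow\l_1$ swap; you might say this explicitly. Second, the $\sigma_*$-equivariance exclusion for $\tb_{12}$ and $\tb_{34}$ is valid (since $B(X)$ is $\sigma_*$-stable and $\tb_{ji}\notin B(X)$ for $i<j$ by \RL{BGE}{a}), but for uniformity you could also exclude these via $\tb_{12}\cdot b_{24}=-1$ and $\tb_{34}\cdot b_{24}=-1$, keeping the whole last step purely intersection-theoretic.
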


\begin{proof}
It follows from
\citep[Theorem~2 (see in particular row 67 in Table~7)]{2021web}
with $\sigma_*(W) \neq W$ and $\sigma_*(W')\neq W'$
that $|B(X)|=4$ and $B(X)$ is unique up to $\aut N(X)$.
We conclude from \Cref{lem:BGE} that $B(X)$, $G(X)$ and $E(X)$ are as asserted
(see alternatively \cite{2024celest} for an
\href{https://github.com/niels-lubbes/celestial-surfaces#automatic-verification-for-lemma-15}{automatic verification}).
By assumption, $\sigma_*(g_0)=g_0$, $\sigma_*(g_1)=g_1$,
$\sigma_*(b_{13})=b_{24}$ and $\sigma_*(\bp_{14})=\bp_{23}$,
and thus $\sigma_*$ must be as asserted.
\end{proof}

\begin{proposition}
\label{prp:BGE8}
If $X\subset\S^3$ is a celestial surface of degree~8,
then $X$ is a dP~surface \st
$\P^1\times\P^1\cong\bO(X)\cong\bA(X)$,
$B(X)=E(X)=\varnothing$,
$G(X)=\{g_0,g_1\}$
and $X$ has no isolated singularities.
\end{proposition}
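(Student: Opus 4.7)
The plan is to apply \Cref{thm:mc} and then to read off every conclusion by specialising \Cref{lem:BGE} and \Cref{lem:C} to the degree~$8$ case, in which the exceptional generators $\p_i$ disappear.

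First I would invoke \Cref{thm:mc}. The two circles through a general point of $X$ furnish two irreducible complex conics, and $X$ cannot be covered by complex lines since any line-ruled surface contained in the smooth three-dimensional quadric $\S^3$ has degree at most~$2$, contradicting $\deg X=8$. The theorem then gives that $X$ is a complex dP surface, and combined with its real structure this makes it a dP surface in the paper's sense. Because $r=8-\deg X=0$ there are no generators $\p_i$, so $N(X)=\bas{\l_0,\l_1}_\Z$ with $\l_0^2=\l_1^2=0$ and $\l_0\cdot\l_1=1$.

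Next I would specialise \Cref{lem:BGE} to $r=0$. Each of the normal forms for $B(X)$ (namely $\tb_{12}$, $b_{12}$, $b_0$) and for $E(X)$ (namely $e_1$, $e_1'$, $e_{01}$) contains some $\p_i$, so $B(X)=E(X)=\varnothing$; only $g_0=\l_0$ and $g_1=\l_1$ survive in the candidate list for $G(X)$, whence $G(X)\subseteq\{g_0,g_1\}$. Being celestial, $X$ carries at least two distinct pencils of circles through a general point, and \RL{C}{b} turns these into two distinct elements of $G(X)$, forcing $G(X)=\{g_0,g_1\}$.

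Finally I would identify the models and exclude isolated singularities. Since $B(X)=\varnothing$ the set of components $\cW(X)$ is empty, so \RL{C}{a} yields $\eta(\Sing\bA(X))=\varnothing$; as $\eta$ is everywhere defined this forces $\bA(X)$ to be smooth, in which case the anticanonical morphism $\psi_{-\k}\c\bO(X)\to\bA(X)$ is an isomorphism. A smooth projective surface whose N\'eron-Severi lattice admits a basis of two isotropic classes meeting in a single point is the Hirzebruch surface $\mathbb{F}_0=\P^1\times\P^1$, so $\bO(X)\cong\bA(X)\cong\P^1\times\P^1$. Any isolated singularity of $X$ would pull back under the desingularization $\varphi\c\bO(X)\to X$ to a connected union of contracted complex curves whose classes form a nonempty component of $B(X)$ via the surjection $\Gamma$; since $B(X)=\varnothing$, no such singularity exists. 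The main obstacle I anticipate is a clean justification of the ``not covered by complex lines'' hypothesis for \Cref{thm:mc}; once that is in hand, the whole statement is essentially forced by setting $r=0$ in the explicit class lists.
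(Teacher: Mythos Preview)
The paper itself does not prove this proposition; it simply cites \citep[Proposition~17]{2024great}. Your route via \Cref{thm:mc}, \Cref{lem:BGE} and \Cref{lem:C} is a reasonable internal reconstruction, and the overall strategy is sound, but two of the supporting claims are false as stated and a third is misattributed.

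The assertion that a line-ruled surface in the smooth quadric threefold $\S^3$ has degree at most~$2$ is not true: scrolls of arbitrarily high degree occur in $\S^3$, since any curve in the Fano variety of lines $F(\S^3)\cong\P^3$ sweeps one out. You rightly flag the ``not covered by complex lines'' hypothesis of \Cref{thm:mc} as the main obstacle, and it does require a separate argument. The second false step is the claim that a smooth projective surface with N\'eron--Severi lattice $\bas{\l_0,\l_1}_\Z$ must be $\P^1\times\P^1$: the Hirzebruch surface $\mathbb{F}_2$ has the same lattice (take $\l_0:=f$, $\l_1:=f+s$ with $s$ the negative section and $f$ the fibre). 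What actually pins down $\P^1\times\P^1$ is the combination with $B(X)=\varnothing$, since $\mathbb{F}_2$ carries a $(-2)$-curve; equivalently, among weak del Pezzo surfaces of degree~$8$ only $\P^1\times\P^1$, $\mathbb{F}_1$ and $\mathbb{F}_2$ occur, the lattice excludes $\mathbb{F}_1$, and $B(X)=\varnothing$ excludes $\mathbb{F}_2$. Finally, your isolated-singularity argument reaches the right conclusion, but the appeal to $\Gamma$ is off target: $\Gamma$ only maps onto $\eta(\Sing\bA(X))$, which need not contain singularities created by the projection $\eta$ itself. The valid mechanism is that an isolated singular point of $X$ is a normal point (as $X$ is a divisor on the smooth $\S^3$, hence $S_2$), so its $\varphi$-fibre must be a positive-dimensional contracted curve $C$; then $-\k\cdot[C]=0$ and $[C]^2<0$, and since $\varphi$ contracts no $(-1)$-curves, adjunction forces each component of $C$ to be a $(-2)$-curve, contradicting $B(X)=\varnothing$.
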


\begin{proof}
See \citep[Proposition~17]{2024great}.
\end{proof}

\section{Concurrent complex double lines and projections}
\label{sec:pr}

In this section, we provide a characterization
for the linear singular components in celestial surfaces of degree eight.
Our strategy is to analyze complex projections
of these surfaces \st the center of
projection coincides with a complex point
in the singular locus.
The N\'eron-Severi lattices
of the complex projections encode more geometry
than the N\'eron-Severi lattice of the celestial surface itself.

\begin{notation}
\label{ntn:X}
In what follows, we let $\cX\subset\S^3$ denote a celestial surface of degree 8
and $\cZ_p\subset\P^3$ for $p\in\S^3$ denotes the stereographically projected complex surface~$\pr_p(\cX)$.
Notice that $\cZ_p$ is real if and only if $p\in\S^3_\R$.
\END
\end{notation}

We denote the complex tangent hyperplane section of $\S^3$ at
a complex point $p\in\S^3$
by $\U_p\subset\S^3$.
Thus $\U_p$ is a complex quadric cone with complex vertex $p$.

The \df{absolute conic}~$\cU_p$ is defined as
the complex irreducible conic~$\pr_p(\U_p)\subset\P^3$.
The multiplicity of $\cU_p\cap Z$ in a complex variety~$Z\subset\P^3$
is called the \df{cyclicity} of~$Z$.
The \df{plane at infinity}~$\cH_p\subset\P^3$
is defined as the complex plane spanned by $\cU_p$.
Notice that $\pr_p\c\S^3\dto\P^3$ is a complex biregular isomorphism outside $\U_p$ and
$\cH_p$.

The \df{concurrency}~$\theta_p(\cX)\in\Z_{\geq 0}$ denotes
the number of complex lines in~$\cX$
that pass through the complex point~$p\in \cX$.

\begin{lemma}
\label{lem:d}
For all $p\in\Sing\cX$ the complex projection $\cZ_p$ is a complex dP surface of
cyclicity $\deg \cZ_p-4$ and degree $4\leq\deg\cZ_p\leq 6$.
\end{lemma}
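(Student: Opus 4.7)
The plan is to extract the global structure of $\cX$ from \Cref{prp:BGE8} and then perform a degree–multiplicity analysis of the stereographic projection $\pr_p\c\cX\dto\cZ_p$.

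First, by \Cref{prp:BGE8}, $\cX$ is a dP surface of degree~$8$ with $\bO(\cX)\cong\bA(\cX)\cong\P^1\times\P^1$, $B(\cX)=E(\cX)=\varnothing$, and $G(\cX)=\{g_0,g_1\}$. Hence $\cX$ is the birational image of the Veronese-Segre surface $\bA(\cX)\subset\P^8$ (embedded via $|{-\k}|$ of bidegree $(2,2)$) under a linear projection $\eta$ from a $3$-plane, and carries two base-point-free pencils of circles coming from the two rulings of $\P^1\times\P^1$.

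Second, setting $m:=\operatorname{mult}_p\cX\geq 2$, I would verify that $\pr_p|_\cX$ is birational and $\deg\cZ_p=8-m$ by intersecting $\cX$ with a generic $2$-plane $\Pi\subset\P^4$ through $p$: the length-$8$ scheme $\Pi\cap\cX$ has length $m$ supported at $p$ and $8-m$ further reduced points that map injectively onto $\pr_p(\Pi)\cap\cZ_p$. This already gives $\deg\cZ_p\leq 6$.

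The main obstacle is the lower bound $\deg\cZ_p\geq 4$, equivalently $m\leq 4$. I would show that the two base-point-free pencils on $\cX$ descend to base-point-free pencils of conics on $\cZ_p$ with self-intersection zero and meeting in exactly one point generically, so $\cZ_p$ is birational to $\P^1\times\P^1$ and is not covered by lines. Applying Schicho's \Cref{thm:mc} then identifies $\cZ_p$ as a complex dP surface of degree between $3$ and $8$. To exclude the degree-$3$ case I would use a secant-line analysis on $\bA(\cX)\subset\P^8$: the fiber $(\eta\circ\varphi)^{-1}(p)\subset\P^1\times\P^1$ corresponds to a set of points whose images in $\P^8$ are collinear, and since $\bA(\cX)$ contains no lines (as $E(\cX)=\varnothing$), this fiber has length at most $4$, bounding $m\leq 4$.

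Finally, for the cyclicity, I would analyze the image of the tangent hyperplane section $C:=\cX\cap T_p\S^3=\cX\cap\U_p$ under $\pr_p$. A tangent-cone computation shows that $C$ has multiplicity $2m$ at $p$: the tangent cones of $\cX$ and $\U_p$ at $p$ are $2$-dimensional cones of degrees $m$ and $2$ inside the $3$-dimensional hyperplane $T_p\S^3$, meeting properly in a $1$-dimensional cone of degree $2m$. Since $\pr_p$ collapses the rulings of $\U_p$ through $p$ to points of the absolute conic $\cU_p$, the pushforward $\pr_{p*}C$ is supported on $\cU_p$ and has degree $8-2m$, giving the cycle $(4-m)\cdot\cU_p$. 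Matching with $\cZ_p\cdot\cH_p$ of degree $\deg\cZ_p=8-m$, we conclude that $\cU_p$ appears in $\cZ_p$ with multiplicity $4-m=\deg\cZ_p-4$, yielding the claimed cyclicity.
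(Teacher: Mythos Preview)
Your overall structure matches the paper's: use \Cref{prp:BGE8} to produce two base-point-free pencils of circles on~$\cX$, push them forward to pencils of conics on~$\cZ_p$, and invoke \Cref{thm:mc} to identify $\cZ_p$ as a complex dP surface. The divergence is that the paper outsources the cyclicity formula to \citep[Proposition~5]{2021circle} and reads off the degree range $4\leq\deg\cZ_p\leq 6$ from it, whereas you attempt to derive both directly. Two of your direct steps have genuine gaps.

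The secant-line argument for $m\leq 4$ is incorrect. The fiber $\eta^{-1}(p)\subset\bA(\cX)\subset\P^8$ consists of the points of $\bA(\cX)$ lying in one fiber of the ambient linear projection $\P^8\dashrightarrow\P^4$; since the center of projection is a $3$-plane, each such fiber is a $4$-plane, not a line. The points of $\eta^{-1}(p)$ therefore need not be collinear, and the fact that $\bA(\cX)$ contains no lines ($E(\cX)=\varnothing$) gives no bound on their number.

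The tangent-cone computation of the cyclicity assumes, without justification, that the projectivized tangent cone of~$\cX$ at~$p$ and the quadric cone~$\U_p$ (equal to its own tangent cone at the vertex) meet properly inside $T_p\S^3$. If the degree-$m$ tangent cone of~$\cX$ contains the irreducible quadric $\U_p$ as a component---which is not excluded a~priori for $m\geq 2$---then $\operatorname{mult}_p C>2m$ and your degree count for $\pr_{p*}C$ fails. Moreover, the hyperplane section $\cZ_p\cap\cH_p$ is not just $\pr_{p*}C$: resolving $\pr_p$ by blowing up~$p$, the exceptional divisor contributes an additional curve of degree~$m$ in~$\cH_p$ (the projectivized tangent cone of~$\cX$), accounting for the discrepancy between $\deg(\pr_{p*}C)=8-2m$ and $\deg(\cZ_p\cap\cH_p)=8-m$. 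You would need to verify that this extra curve does not contain~$\cU_p$---which is again exactly the condition that the tangent cone of~$\cX$ avoids~$\U_p$.

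Once the cyclicity formula is established rigorously, the lower bound $\deg\cZ_p\geq 4$ follows from non-negativity of the cyclicity, so the flawed secant step becomes superfluous; this is essentially the paper's route via the external reference.
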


\begin{proof}
It follows from \Cref{prp:BGE8} and \RL{C}{b} that
$\cX$ is covered by two base point free pencils of circles.
Hence, $\cZ_p$ is covered by two families of complex conics
and not covered by complex lines.
\Cref{thm:mc} now implies that $\cZ_p$ is a dP surface.
It follows from \citep[Proposition~5]{2021circle}
that the absolute conic $\cU_p$ has multiplicity~$\deg \cZ_p-4$ in~$\cZ_p$
and thus $\deg\cZ_p\in\{4,5,6\}$ by B\'ezout's theorem.
\end{proof}

\begin{lemma}
\label{lem:dpencil}
For all $p\in\Sing\cX$ there exist
base point free pencils of circles~$M$ and~$M'$ on~$\cX$
and
complex pencils of conics~$F$ and~$F'$
on the complex dP surface $\cZ_p$
\st up to $\aut N(\cX)$ and $\aut N(\cZ_p)$, and for general $i\in\P^1$, we have
\[
F_i=\pr_p(M_i),     \qquad
F'_i=\pr_p(M'_i),   \qquad
[F_i]=[M_i]=g_0,    \qquad
[F'_i]=[M'_i]=g_1,
\]
and $|M_i\cap \U_p|=|M'_i\cap\U_p|=|F_i\cap \cU_p|=|F'_i\cap \cU_p|=2$.
\end{lemma}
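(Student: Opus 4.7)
The plan is to lift the two rulings of $\P^1\times\P^1$ to pencils on both $\cX$ and $\cZ_p$, normalize their N\'eron--Severi classes by suitable lattice automorphisms, and verify the cyclicity count by a B\'ezout argument.

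For the pencils on $\cX$: by \Cref{prp:BGE8}, $\cX$ is a dP~surface with $\bO(\cX)\cong\bA(\cX)\cong\P^1\times\P^1$, $G(\cX)=\{g_0,g_1\}$, and $B(\cX)=E(\cX)=\varnothing$. By \RL{C}{b}, the two classes $g_0=\l_0$ and $g_1=\l_1$ correspond to pencils of conics on $\cX$, and since $B(\cX)=\varnothing$ there is no component to produce a base point, so these pencils are base point free. Their members are the two circles through a general point of $\cX$ (condition~\ref{iv} of \Cref{def:st}), so they are pencils of circles; call them $M$ and $M'$, and use the swap $\l_0\leftrightarrow\l_1\in\aut N(\cX)$ to arrange $[M_i]=g_0$ and $[M'_i]=g_1$.

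Next I would transfer these pencils to $\cZ_p$. Let $\varphi\c\P^1\times\P^1\to\cX$ be the desingularization from \Cref{prp:OA}. The composition $\pr_p\circ\varphi\c\P^1\times\P^1\dto\cZ_p$ is birational and can be resolved through a common smooth model $Y$ exactly as in the proof of \Cref{lem:l01}, so the two rulings of $\P^1\times\P^1$ descend to pencils $F,F'$ on $\cZ_p$ with $F_i=\pr_p(M_i)$ and $F'_i=\pr_p(M'_i)$. By \Cref{lem:d}, $\cZ_p$ is a complex dP~surface, so \RL{C}{b} applies. Because the pencils $M,M'$ give $\cX$ a $\P^1\times\P^1$ grid, $M_i\cap M'_j$ is a single general point of $\cX$; since $\pr_p$ is a birational isomorphism off $p$, this forces $|F_i\cap F'_j\setminus\Sing\cZ_p|=1$ and hence $[F]\cdot[F']=1$. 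Then \Cref{lem:l01} normalizes $[F_i]=g_0$ and $[F'_i]=g_1$ up to $\aut N(\cZ_p)$.

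For the cyclicity, the identity $-\k\cdot g_0=2$ shows $\deg F_i=2$, so $F_i$ is an irreducible conic and the linear projection $\pr_p|_{M_i}\c M_i\to F_i$ is a birational map between conics, hence generically bijective---equivalently $p\notin\plane(M_i)$ for general $i$. Since $M_i$ is not contained in the tangent hyperplane $T_p\S^3$ for generic $i$, B\'ezout yields $|M_i\cap\U_p|=|M_i\cap T_p\S^3|=2$; because $\pr_p(\U_p)=\cU_p$ and $\pr_p$ is injective on $M_i$, this forces $|F_i\cap\cU_p|=2$. The argument for $M'_i$ and $F'_i$ is identical. The main obstacle is essentially bookkeeping: coordinating the two N\'eron--Severi labellings through the birational correspondence $\pr_p$ and confirming that each projected member $\pr_p(M_i)$ retains its degree (equivalently that $p\notin\plane(M_i)$ generically); once these are in place, the cyclicity count reduces to B\'ezout.
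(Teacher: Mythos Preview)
Your argument is correct and follows essentially the same route as the paper: \Cref{prp:BGE8} and \RL{C}{b} give the base point free pencils on~$\cX$, the birationality of~$\pr_p$ transfers the intersection count so that \Cref{lem:l01} normalizes the classes on~$\cZ_p$, and B\'ezout against the hyperplane section~$\U_p$ yields the cyclicity. Two cosmetic remarks: your appeal to \Cref{def:st}\ref{iv} is circular at this stage (just use that $\cX$ is celestial with $G(\cX)=\{g_0,g_1\}$), and ``$\pr_p$ is a birational isomorphism off $p$'' should read ``off $\U_p$ and $\cH_p$''---neither affects the argument.
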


\begin{proof}
Recall from \Cref{lem:d} that $\cZ_p$ is a complex dP surface \st $4\leq \deg\cZ_p\leq 8$.
We know from \Cref{prp:BGE8} and \RL{C}{b} that
$[M_i]=g_0$ and $[M'_i]=g_1$ up to $\aut N(\cX)$ so that $[M_i]\cdot [M'_i]=1$.
It follows from \RL{C}{b} and $\pr_p$ being birational
that $|M_i\cap M'_i\setminus \Sing\cX|=|F_i\cap F'_i\setminus \Sing\cZ_p|=1$ and thus $[F_i]\cdot[F'_i]=1$.
We conclude from \Cref{lem:l01} that $[F_i]=g_0$ and $[F'_i]=g_1$ up to $\aut N(\cZ_p)$.
Since the complex quadratic cone~$\U_p\subset\S^3$ is a hyperplane section and
$M$ is a base point free pencil of circles,
it follows from B\'ezout's theorem that $|M_i\cap \U_p|=2$,
which implies that $|F_i\cap \cU_p|=2$.
\end{proof}

\begin{lemma}
\label{lem:d4}
If $\deg \cZ_p=4$ for some $p\in\Sing\cX$, then $\theta_p(\cX)=4$ and $p\in\Sing\cX_\R$.
\end{lemma}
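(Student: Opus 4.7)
The first step is to pin down the multiplicity of $\cX$ at $p$. Since $\pr_p\c\S^3\dto\P^3$ is a birational linear projection from $p$ and its restriction to $\cX$ remains birational, the degree formula gives $\deg\cZ_p=\deg\cX-m_p(\cX)=8-m_p(\cX)$, so the hypothesis forces $m_p(\cX)=4$. The projectivized tangent cone $T$ of $\cX$ at $p$ is therefore a plane curve of degree four in $\P(T_p\S^3)\cong\P^2$, and under the identification $\P(T_p\S^3)\cong\cH_p$ induced by $d\pr_p$ it corresponds to the hyperplane section $\cZ_p\cap\cH_p$. By Proposition~\ref{prp:BGE8}, $\bO(\cX)\cong\bA(\cX)\cong\P^1\times\P^1$ and the morphism $\varphi\c\P^1\times\P^1\to\cX$ is the normalization of $\cX$, so $\varphi^{-1}(p)$ is a finite set of smooth points, one per analytic branch of $\cX$ at $p$. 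Each smooth branch contributes one tangent $2$-plane, corresponding to a line in $\P^2$; since the total degree equals $4$, there are exactly four transverse branches and $T=\ell_1\cup\cdots\cup\ell_4$ is a union of four distinct lines.

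Next, I match these branches with four lines on $\cX$ through $p$. By B\'ezout in $\cH_p\cong\P^2$ and the vanishing cyclicity of $\cZ_p$, the scheme-theoretic intersection $T\cap\cU_p$ has length eight, and each $\ell_i$ meets $\cU_p$ in two points. A complex line on $\cX$ through $p$ is contracted by $\pr_p$ to a point of $\cU_p\cap\cZ_p\subset T\cap\cU_p$, giving the upper bound $\theta_p(\cX)\le 8$. To obtain the equality $\theta_p(\cX)=4$, I plan to invoke the pencils $M,M'$ from Lemma~\ref{lem:dpencil}: each preimage $(a_i,b_i)\in\varphi^{-1}(p)$ sits on a unique $\l_0$-fiber and a unique $\l_1$-fiber, each mapping under $\varphi$ to a smooth conic on $\cX$ through $p$ that $\pr_p$ collapses to a line in $\cZ_p$. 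The core claim is that, for each of the four preimages, exactly one of these two fibers is projected two-to-one by the anticanonical projection $\eta\c\bA(\cX)\to\cX$ onto a true line through $p$ on $\cX$, so each branch yields exactly one line and $\theta_p(\cX)=4$. Establishing this one-line-per-branch distribution is the main obstacle; I expect the verification to proceed by comparing the spans of the two pencil fibers at each preimage against the projection centre of $\eta$ in $\P^8$, ruling out the degenerate cases in which both fibers or neither fiber are two-to-one.

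Finally, for the reality of $p$, suppose for contradiction that $p\neq \sigma_\cX(p)=:\bar p$. Since $\cX$ is real, $\bar p\in\Sing\cX$ and $\deg\cZ_{\bar p}=4$ by conjugation, so the preceding argument yields $\theta_{\bar p}(\cX)=4$. The four lines through $p$ must be non-real, for a real line through $p$ would also contain $\bar p$ while only one line joins two distinct points of $\P^4$. Their complex conjugates then give four further distinct lines through $\bar p$, producing eight distinct lines on $\cX$. Every such line pulls back to a fiber of class $\l_0$ or $\l_1$ on $\bO(\cX)=\P^1\times\P^1$ under a two-to-one projection by $\eta$; combined with Proposition~\ref{prp:BGE8} and the birationality of $\eta$, the number of special fibers in the two pencils is bounded, and eight exceeds this bound, yielding a contradiction. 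Hence $p=\sigma_\cX(p)$ and $p\in\Sing\cX_\R$.
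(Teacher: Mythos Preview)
Your proposal has a genuine, self-acknowledged gap in the central step, and the reality argument is not rigorous either.

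\textbf{The count $\theta_p(\cX)=4$.} You write that ``establishing this one-line-per-branch distribution is the main obstacle'' and that you ``expect the verification to proceed by comparing the spans\ldots''. That is an outline, not a proof. Your proposed mechanism (checking which pencil fiber is mapped two-to-one by $\eta$ at each of the four preimages) is vague, and nothing in the paper's toolkit makes it clear how to carry it out. The paper argues very differently. Since $\deg\cZ_p=4$, the cyclicity of $\cZ_p$ is zero (\Cref{lem:d}), so $\cU_p\not\subset\cZ_p$ and the two points $\{\aa,\oa\}=F_i\cap\cU_p$ are \emph{base points} of the pencil~$F$ on~$\cZ_p$. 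Because the corresponding pencil~$M$ on~$\cX$ is base-point free (\Cref{prp:BGE8}), the preimages $L:=\pr_p^{-1}(\aa)\cap\cX$ and $\oL:=\pr_p^{-1}(\oa)\cap\cX$ are forced to be lines in~$\U_p$; likewise $R,\oR$ from~$F'$. The subtle part---that these four lines are pairwise distinct, i.e.\ $\{\aa,\oa\}\cap\{\bb,\ob\}=\varnothing$---is obtained from the N\'eron--Severi analysis of the quartic dP surface~$\cZ_p$ (\Cref{lem:dp4}), using that the classes $e_1,e_4\in E(\cZ_p)$ meet the base-point components $W_\aa$ and $W_\bb,\sigma_*(W_\bb)$ nontrivially, together with \RL{C}{a}. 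Finally, $E(\cX)=\varnothing$ and \RL{C}{c} force $L,\oL,R,\oR\subset\Sing\cX$ to be double lines, and B\'ezout on the hyperplane section $\U_p\cap\cX$ gives exactly four.

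\textbf{Reality of $p$.} Your argument assumes $\theta_p(\cX)=4$ (already unproven) and then claims that eight double lines ``exceeds this bound'' without specifying any bound; no a~priori bound on the number of complex double lines is available at this point of the paper. The paper's argument is short and logically prior to the count: since each circle $M_i$ is real and meets $L$ and $\oL$ in complex conjugate points, one has $\sigma_\cX(L)=\oL$, so $p=L\cap\oL$ is real. Crucially, the reality of $p$ is then \emph{used} to make $\cZ_p$ real, which is a hypothesis of \Cref{lem:dp4}; your ordering (reality last) would not mesh with the paper's machinery.

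A minor further point: your claim that the projectivized tangent cone is a union of four \emph{distinct} lines requires that the four smooth branches have pairwise distinct tangent $2$-planes; you have not excluded coinciding tangent directions.
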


\begin{proof}
Suppose that $M$, $M'$, $F$, $F'$ are as in \Cref{lem:dpencil}
so that for general $i\in\P^1$:
\[
F_i\cap\cU_p=\{\aa,\oa\},\qquad
F'_i\cap\cU_p=\{\bb,\ob\},\qquad
[F_i]=\l_0,\qquad
[F'_i]=\l_1.
\]
Let $L:=\pr_p^{-1}(\aa)\cap\cX$, $\oL:=\pr_p^{-1}(\oa)\cap\cX$,
$R:=\pr_p^{-1}(\bb)\cap\cX$ and $\oR:=\pr_p^{-1}(\ob)\cap\cX$.
It follows from \Cref{lem:d} that the complex dP surface $\cZ_p$
does not contain the absolute conic~$\cU_p$,
which implies that $\aa$ and $\oa$ are base points of~$F$.
Since the pencil of circles~$M$ on~$\cX$ is base point free,
$L$ and $\oL$ must be complex lines in the complex quadratic cone~$\U_p$.
A circle that belongs to~$M$ meets~$L$ and~$\oL$ in complex conjugate
points and thus $\sigma_{\cX}(L)=\oL$.
This implies $L\cap\oL$ is real so that $p\in\Sing\cX_\R$.
Therefore, $\cZ_p$ is real and $\aa,\oa\in\cZ_p$ are complex conjugate base points of~$F$.
We apply \RLS{C}{a}{C}{b}, and deduce that there exists
a component $W_\aa\subset B(\cZ_p)$ \st
\[
\Gamma(W_\aa)=\aa,\qquad
\Gamma(\sigma_*(W_\aa))=\oa,\qquad
\Lambda(F)\cdot W_\aa\succ 0
\quad\text{and}\quad
\sigma_*(W_\aa)\neq W_\aa.
\]
Similarly, there exists
a component $W_\bb\subset B(\cZ_p)$ \st
\[
\Gamma(W_\bb)=\bb,\qquad
\Gamma(\sigma_*(W_\bb))=\ob,\qquad
\Lambda(F')\cdot W_\bb\succ 0
\quad\text{and}\quad
\sigma_*(W_\bb)\neq W_\bb.
\]
The hypothesis of \Cref{lem:dp4} is satisfied so that
\[
W_\aa=\{\bp_{14}\},~~
\sigma_*(W_\aa)=\{\bp_{23}\},~~
W_\bb=\{b_{13}\},~~
\sigma_*(W_\bb)=\{b_{24}\}
~~\text{and}~~
e_1,e_4\in E(\cZ_p).
\]
It follows from \RL{C}{a} that $\aa\notin\{\bb,\ob\}$, because
\[
e_1\cdot W_\aa\succ 0,\qquad
e_1\cdot W_\bb\succ 0,\qquad
e_4\cdot W_\aa\succ 0
\quad\text{and}\quad
e_4\cdot \sigma_*(W_\bb)\succ 0.
\]
This implies that $\{\pr_p(L),\pr_p(\oL)\}\neq\{\pr_p(R),\pr_p(\oR)\}$
and thus $\U_p\cap\cX$ contains at least four complex lines.
Since $E(\cX)=\varnothing$ by \Cref{prp:BGE8}, it follows from \RL{C}{c}
that $L,\oL,R,\oR\subseteq\U_p\cap\Sing\cX$ are singular components of multiplicity at least two.
By B\'ezout's theorem $\U_p\cap\Sing\cX$ consists of two pairs of complex conjugate double lines
and thus $\theta_p(\cX)=4$ with $p\in\Sing\cX_\R$.
\end{proof}

\begin{lemma}
\label{lem:d5}
If $\deg \cZ_p=5$ for some $p\in\Sing\cX$, then $\theta_p(\cX)=2$ and
\[
\pr_p(\Sing\cX)=\Sing\cZ_p.
\]
\end{lemma}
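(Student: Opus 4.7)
The plan is to parallel the proof of \Cref{lem:d4}, adapted to the fact that here the absolute conic $\cU_p$ is a component of $\cZ_p$ of multiplicity one by \Cref{lem:d}.

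First I apply \Cref{lem:dpencil} to fix real pencils $M, M'$ on $\cX$ and $F, F'$ on $\cZ_p$ with $[F_i]=\l_0$, $[F'_i]=\l_1$, and $|F_i\cap\cU_p|=|F'_i\cap\cU_p|=2$. I then determine $[\cU_p]\in N(\cZ_p)$ by adjunction: since $\cU_p$ is a smooth conic of degree two on the dP5 surface $\cZ_p$, on the smooth model $\bO(\cZ_p)$ one has $[\cU_p]^2=0$ and $(-\k)\cdot [\cU_p]=2$. Hence $[\cU_p]\in\cG(\cZ_p)$ by \RL{C}{b}, and a direct integer-linear computation in $N(\cZ_p)=\bas{\l_0,\l_1,\p_1,\p_2,\p_3}_\Z$ shows that the only classes satisfying both equations are $\l_0$, $\l_1$, and $\l_0+\l_1-\p_i-\p_j$ for some $i\neq j$, falling into the two $\aut N(\cZ_p)$-orbits $g_0$ and $g_{12}$ from \RL{BGE}{b}.

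As in the proof of \Cref{lem:d4}, each base point $q\in\Sing\cZ_p\cap\cU_p$ of $F$ (resp.\ $F'$) pulls back via $\pr_p$ to a complex line through $p$ in $\U_p$ of class $\l_1$ (resp.\ $\l_0$), since $M, M'$ are base point free on $\cX$; by \Cref{prp:BGE8}, $E(\cX)=\varnothing$, so these lines lie in $\Sing\cX$. Combining $|F_i\cap\cU_p|=2$ with \RL{C}{b} gives
\[
\theta_p(\cX)=(2-\l_0\cdot[\cU_p])+(2-\l_1\cdot[\cU_p])=4-(\l_0+\l_1)\cdot[\cU_p].
\]
The case $[\cU_p]\sim g_{12}$ yields $\theta_p(\cX)=2$. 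To rule out $[\cU_p]\sim g_0$, observe that $[\cU_p]=\l_0$ would force $\cU_p$ to coincide with a fiber $F_{i_0}=\pr_p(M_{i_0})$, and hence $M_{i_0}\subseteq\U_p$; I would derive a contradiction by examining the complex-conjugate structure of the two components of $B(\cZ_p)$ that would need to be incident to $\cU_p$ with $\l_0$-intersection positive, paralleling the use of \Cref{lem:dp4} in \Cref{lem:d4}.

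For the second assertion: since $\pr_p$ is biregular on $\S^3\setminus\U_p$, it restricts to an isomorphism $\cX\setminus\U_p\to\cZ_p\setminus\cU_p$, so the singular loci coincide off $\U_p$ and $\cU_p$ respectively. On $\cU_p$, every singular point of $\cZ_p$ is either the image of a point of $\Sing\cX\cap\U_p$ or of a contracted line through $p$, and both lie in $\Sing\cX$ by the above, proving $\pr_p(\Sing\cX)=\Sing\cZ_p$. The main technical obstacle will be the rigorous exclusion of $[\cU_p]\sim g_0$; this requires understanding which components of $B(\cZ_p)$ are incident to $\cU_p$ and may require an analogue of \Cref{lem:dp4} tailored to dP5 surfaces.
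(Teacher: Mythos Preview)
Your plan has a fundamental gap: you want to rule out $[\cU_p]\sim g_0$ by ``examining the complex-conjugate structure of the two components of $B(\cZ_p)$'', paralleling \Cref{lem:d4} and \Cref{lem:dp4}. But those arguments use the real structure~$\sigma_*$ on $N(\cZ_p)$, and here there is no reason for $p$ to be real. Indeed, in the main application of this lemma (\Cref{prp:P}), the centre is $p=\pp$, an intersection of non-real complex conjugate lines in a Cliffordian quartet, so $\cZ_\pp$ is genuinely non-real and $\sigma_*=\id$ by convention. Any argument invoking complex conjugation on $\cZ_p$ therefore collapses. The paper instead works purely combinatorially: from $|F_i\cap\cU_p|=2$ and $g\cdot g'\leq 1$ for $g,g'\in G(\cZ_p)$ it first deduces that $F$ and $F'$ each have at least one base point on $\cU_p$, then uses \RL{C}{b} to locate classes $\bp_{13},b_{12}\in B(\cZ_p)$ (up to $\aut N(\cZ_p)$), which forces $B(\cZ_p)\subseteq\{b_{12},\bp_{13},\tb_{12},\tb_{13}\}$ and hence $G(\cZ_p)\subseteq\{g_0,g_1,g_{23}\}$. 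Only then does it show $[\cU_p]\cdot g_0,[\cU_p]\cdot g_1>0$, pinning down $[\cU_p]=g_{23}$ without any appeal to reality.

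There are two further issues. Your displayed formula $\theta_p(\cX)=4-(\l_0+\l_1)\cdot[\cU_p]$ is only an inequality $\geq$ from \RL{C}{b}, since the third property there gives $|F_i\cap\cU_p\setminus\Sing\cZ_p|\leq g_0\cdot[\cU_p]$ rather than equality; the paper obtains $\theta_p(\cX)=2$ only after determining $B(\cZ_p)$ and $E(\cZ_p)$ and arguing directly that a third line through $p$ would force an extra base point. For the second assertion, note that $\pr_p$ is biregular between $\S^3\setminus\U_p$ and $\P^3\setminus\cH_p$ (not $\P^3\setminus\cU_p$), so you must exclude $1$-dimensional singular components of~$\cZ_p$ lying in $\cH_p\setminus\cU_p$. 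The paper does this by computing the class decomposition $[\cZ_p\cap\cH_p]=g_{23}+b_{12}+\bp_{13}+e_1+e_2+e_3$ and checking via \RL{C}{c} and \RL{C}{d} that the three residual lines in~$\cH_p$ are not double.
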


\begin{proof}
We know from \Cref{lem:d} that $\cZ_p$ is a dP surface
and we assume \Wlog that $\cZ_p$ is non-real so that $\sigma_*=\id$ by convention.
We apply \Cref{lem:BGE} and find that
\begin{gather*}
B(\cZ_p)\subseteq\set{b_{st},\bp_{st},\tb_{st}}{(s,t)\in \{(1,2),(1,3),(2,3)\}},
\\
G(\cZ_p)\subseteq\{g_0,g_1,g_{12},g_{13},g_{23}\}
\quad\text{and}\quad
E(\cZ_p)\subseteq\{e_1,e_2,e_3,e_{01},e_{02},e_{03},e_{11},e_{12},e_{13},e'_4\}.
\end{gather*}
See \cite{2024celest} for a
\href{https://github.com/niels-lubbes/celestial-surfaces#intersection-numbers-for-lemma-21}{table of intersection numbers}
between these classes.
We consider the bijection $\Lambda\c\cF(\cZ_p)\to G(\cZ_p)$ at \RL{C}{b}.
Let $i\in\P^1$ be general.
By \Cref{lem:dpencil} there exist
complex pencils of conics~$F$ and~$F'$ on $\cZ_p$ \st
\[
\Lambda(F)=g_0,\qquad
\Lambda(F')=g_1
\quad\text{and}\quad
|F_i\cap \cU_p|=|F'_i\cap \cU_p|=2.
\]
It follows from \Cref{lem:d} that $\cZ_p$ has cyclicity one
so that $\cU_p\nsubseteq\Sing\cZ_p$ and thus $[\cU_p]\in G(\cZ_p)$.
Since $\set{u\cdot v}{u,v\in G(\cZ_p)}=\{0,1\}$,
we deduce from the third property at \RL{C}{b} that
\[
|F_i\cap\cU_p\setminus\Sing\cZ_p|\leq\Lambda(F)\cdot [\cU_p]\leq 1
\quad\text{and}\quad
|F'_i\cap\cU_p\setminus\Sing\cZ_p|\leq\Lambda(F)\cdot [\cU_p]\leq 1.
\]
Hence, it follows from the first property at \RL{C}{b} that
there exist components $W_\aa,W_\bb\subset B(\cZ_p)$
corresponding to the base points $\aa,\bb\in\cU_p$
\st $\Lambda(F)\cdot W_\aa\succ 0$ and $\Lambda(F')\cdot W_\bb\succ 0$.
Since $\Lambda(F)=\l_0$ and $\Lambda(F)=\l_1$, we find that
\[
W_\aa\cap\{\bp_{12},\bp_{13},\bp_{23}\}\neq\varnothing
\quad\text{and}\quad
W_\bb\cap\{b_{12},b_{13},b_{23}\}\neq\varnothing.
\]
We may assume that $b_{12}\in W_\bb$ up to an element of $\aut N(\cZ_p)$ that permutes the generators
in~$\{\p_1,\p_2,\p_3\}$.
By \RL{BGE}{a}, we require that $w\cdot b_{12}\neq -1$ for all $w\in W_\aa$,
and thus $\bp_{13}\in W_\aa$ up to permuting the generators in~$\{\p_1,\p_2\}$.
Applying \RL{BGE}{a} again shows that
\[
B(\cZ_p)\subseteq\{b_{12},\bp_{13},\tb_{12},\tb_{13}\}
\quad\text{and}\quad
G(\cZ_p)\subseteq\{g_0,g_1,g_{23}\}.
\]
Since $\Lambda(F)\cdot w=0$ for all $w\in  B(\cZ_p)\setminus \{\bp_{13}\}$,
we deduce that $F$ has only one base point on $\cU_p$.
Hence, $|F_i\cap\cU_p\setminus\Sing\cZ_p|>0$
so that $[\cU_p]\cdot \Lambda(F)>0$ by the third property of \RL{C}{b}.
Similarly, $[\cU_p]\cdot \Lambda(F')>0$ and thus
\[
[\cU_p]=g_{23}.
\]
Since $g_{23}\cdot \tb_{12}=g_{23}\cdot \tb_{13}=-1$
it follows from \Cref{lem:BGE} that
\[
B(\cZ_p)=\{b_{12},\bp_{13}\},\quad
G(\cZ_p)=\{g_0,g_1,g_{23}\}
\quad\text{and}\quad
E(\cZ_p)=\{e_1,e_2,e_3,e_{03},e_{12}\}.
\]
We established that $W_\aa=\{\bp_{13}\}$ and $W_\bb=\{b_{12}\}$.
Since $[\cZ_p\cap\cH_p]=-\k$, $[\cU_p]=g_{23}$ and
$-\k-g_{23}-b_{12}-\bp_{13}=e_1+e_2+e_3$,
it follows from \RLS{C}{c}{C}{d}, that the hyperplane section~$\cZ_p\cap\cH_p$ consists aside $\cU_p$
of three complex lines \st their incidences
are diagrammatically illustrated by one of
the two diagrams in \Cref{fig:d5}.
In particular, $\aa\neq\bb$ and no 1-dimensional component of $\Sing\cZ_p$ is
contained in the plane at infinity~$\cH_p$.
Since $\pr_p$ is a biregular isomorphism outside~$\U_p$ and~$\cH_p$,
we conclude that $\pr_p(\Sing\cX)=\Sing\cZ_p$.
\begin{figure}[!ht]
\centering
\setlength{\tabcolsep}{1cm}
\begin{tabular}{cc}
\begin{tikzpicture}
\draw[blue] (0,0) circle [radius=1];
\node[blue, below] at (0,-1) {$g_{23}$};
\draw[thick, red, densely dotted] (-1.3,0.7) -- (1.3,0.7) node[right] {$e_1$};
\draw[thick, red, densely dotted] (-1,1) -- (1,-1)  node[below right] {$e_3$};
\draw[thick, red, densely dotted] ( 1,1) -- (-1,-1) node[below left] {$e_2$};
\draw[draw=black, fill=colGreen] (-0.7,0.7)  circle [radius=0.1] node[above=5, colCiteGreen] {$\{\bp_{13}\}$};
\draw[draw=black, fill=colGreen] ( 0.7,0.7)  circle [radius=0.1] node[above=5, colCiteGreen] {$\{b_{12}\}$};
\draw[draw=black, fill=colO    ] (0,0)       circle [radius=0.1];
\draw[draw=black, fill=white   ] (-0.7,-0.7) circle [radius=0.1];
\draw[draw=black, fill=white   ] ( 0.7,-0.7) circle [radius=0.1];
\end{tikzpicture}
&
\begin{tikzpicture}
\draw[blue] (0,0) circle [radius=1];
\node[blue] at (1,-1) {$g_{23}$};
\draw[thick, red, densely dotted] (-1.3,0.7) -- (1.3,0.7) node[right] {$e_1$};
\draw[thick, red, densely dotted] (-0.9,1.2) -- ( 0.25,-1.5);
\draw[thick, red, densely dotted] ( 0.9,1.2) -- (-0.25,-1.5);
\node[below right, red] at ( 0.25,-1.3) {$e_3$};
\node[below left,  red] at (-0.25,-1.3) {$e_2$};
\draw[draw=black, fill=colGreen] (-0.7,0.7) circle [radius=0.1] node[above left=2, colCiteGreen] {$\{\bp_{13}\}$};
\draw[draw=black, fill=colGreen] ( 0.7,0.7) circle [radius=0.1] node[above right=2, colCiteGreen] {$\{b_{12}\}$};
\draw[draw=black, fill=colO    ] (0,-1)     circle [radius=0.1];
\end{tikzpicture}
\end{tabular}
\caption{The classes of the irreducible components
in the hyperplane section $\cZ_p\cap\cU_p$ and the base points $\aa,\bb\in\cU_p$,
where $[\cU_p]=g_{23}$, $W_\aa=\{\bp_{13}\}$ and $W_\bb=\{b_{12}\}$.}
\label{fig:d5}
\end{figure}

The preimages $A:=\pr_p^{-1}(\aa)\cap\cX$ and $B:=\pr_p^{-1}(\bb)\cap \cX$
of the base points $\aa$ and $\bb$ are either
0-dimensional or complex lines.
By \Cref{lem:dpencil} there exist base point free pencils of circles~$M$ and~$M'$ on~$\cX$
\st
$F_i=\pr_p(M_i)$,
$F'_i=\pr_p(M'_i)$
$\Lambda(M)=\l_0$ and
$\Lambda(M')=\l_1$.
Hence, $|M_i\cap A|=|M'_i\cap B|=1$
so that $A,B\subset \cX\cap\U_p$ must be different complex lines.
Now suppose by contradiction that $C\subset \cX\cap \U_p$ is a complex line
\st $C\notin\{A,B\}$.
We may assume \Wlog that $\Lambda(M)\cdot [C]>0$ so that $M_i\cap C\neq\varnothing$ by \RL{C}{d}.
This implies that $F$ has both $\pr_p(A)$ and $\pr_p(C)$ as base points.
We arrived at a contradiction as $F$ has only one base point.
We conclude that $\theta_p(\cX)=2$.
\end{proof}

\begin{lemma}
\label{lem:d6}
If $\deg\cZ_p=6$ for some~$p\in\Sing\cX$, then $\theta_p(\cX)\in\{0,1\}$.
\end{lemma}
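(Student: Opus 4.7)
The proof parallels those of \Cref{lem:d4} and \Cref{lem:d5}: analyze the complex dP surface $\cZ_p$, enumerate $B(\cZ_p)$, $G(\cZ_p)$, $E(\cZ_p)$ via \Cref{lem:BGE}, and convert base points of the pencils $F, F'$ on $\cZ_p$ into complex lines through $p$ in $\cX$. The new feature is that cyclicity~$2$ forces $\cU_p \subset \Sing \cZ_p$ as a double conic, and the rank of $N(\cZ_p)$ is only four ($r = 2$), sharply restricting the possible components of $B(\cZ_p)$.

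By \Cref{lem:d} and \Cref{lem:dpencil}, $\cZ_p$ is a complex dP surface of degree $6$ and cyclicity $2$, and there exist pencils $F, F'$ on $\cZ_p$ with $\Lambda(F) = g_0 = \l_0$, $\Lambda(F') = g_1 = \l_1$, and $|F_i \cap \cU_p| = |F'_i \cap \cU_p| = 2$. Assume \Wlog that $\cZ_p$ is non-real so that $\sigma_* = \id$. With $r = 2$, \Cref{lem:BGE} restricts
\[
B(\cZ_p) \subseteq \{b_{12},\, \bp_{12},\, \tb_{12}\},\quad G(\cZ_p) \subseteq \{g_0,\, g_1,\, g_{12}\},\quad E(\cZ_p) \subseteq \{e_1,\, e_2,\, e_{01},\, e_{02},\, e_{11},\, e_{12}\}.
\]
Direct computation gives $b_{12} \cdot \bp_{12} = -1 \notin \{-2, 0, 1\}$, so by \RL{BGE}{a} at most one of $b_{12}, \bp_{12}$ belongs to $B(\cZ_p)$.

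Next I translate lines through $p$ in $\cX$ into base points of $F, F'$ on $\cU_p$. Any complex line $L \subset \cX$ through $p$ lies in $\U_p$ and, since $E(\cX) = \varnothing$ by \Cref{prp:BGE8}, is contained in $\Sing \cX$. The multiplicity of $p$ in $\cX$ equals $\deg \cX - \deg \cZ_p = 2$, so the generic multiplicity $\mu_L$ of $L$ in $\cX$ is at most $2$; combined with $(-\k_\cX) \cdot [L] = 2(a+b)$ for $[L] = a\l_0 + b\l_1$ on $\bO(\cX) \cong \P^1 \times \P^1$ and the fact that $L \subset \Sing \cX$ gives $\mu_L \geq 2$, this forces $[L] \in \{\l_0, \l_1\}$. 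Set $q_L := \pr_p(L) \in \cU_p$. From $\pr_p^{-1}(q_L) \cap \cX = L$ and $\l_0 \cdot \l_1 = 1$, every fiber $M'_i$ meets $L \setminus \{p\}$ in one point (using that $M'$ is base point free by \Cref{lem:dpencil}), so $q_L \in F'_i$ for all $i$ when $[L] = \l_0$; hence $q_L$ is a base point of $F'$. Symmetrically, $[L] = \l_1$ makes $q_L$ a base point of $F$.

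Finally, I count. By \RL{C}{b}, a base point of $F$ on $\cU_p$ corresponds to a component $W \subset B(\cZ_p)$ with $g_0 \cdot W \succ 0$; the computation $g_0 \cdot b_{12} = g_0 \cdot \tb_{12} = 0$ and $g_0 \cdot \bp_{12} = 1$ forces $\bp_{12} \in W$, and symmetrically a base point of $F'$ requires $b_{12} \in W$. Since $B(\cZ_p)$ contains at most one of $\{b_{12}, \bp_{12}\}$, at most one of $F, F'$ contributes a base point on $\cU_p$, so $\theta_p(\cX) \in \{0, 1\}$. The main obstacle is the line-to-base-point correspondence itself, which hinges on correctly identifying $[L]$ on $\bO(\cX) \cong \P^1 \times \P^1$ (ruling out $[L] = \l_0 + \l_1$ via $\mu_L \leq 2$) and transporting the intersection $L \cap M'_i$ to $q_L$ through $\pr_p^{-1}(q_L) \cap \cX = L$.
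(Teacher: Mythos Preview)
Your proof is correct and takes a genuinely different route from the paper's.

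The paper argues by analyzing the plane at infinity~$\cH_p$: it shows that the residual curve $T$ (the closure of $\cZ_p\cap\cH_p\setminus\cU_p$) satisfies $g_0\cdot[T]=g_1\cdot[T]=0$, forces $[T]=e_1+e_2$, and then uses $e_1\cdot\tb_{12}=e_1\cdot\bp_{12}=-1$ to conclude $B(\cZ_p)\subseteq\{b_{12}\}$. From there the base-point count for $F,F'$ is immediate.

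You bypass the hyperplane at infinity entirely. Your key step is the single intersection number $b_{12}\cdot\bp_{12}=-1\notin\{-2,0,1\}$, which via \RL{BGE}{a} already excludes having both classes in $B(\cZ_p)$. You then set up the line $\leftrightarrow$ base-point correspondence explicitly by first pinning down $[L]\in\{\l_0,\l_1\}$ through the multiplicity argument $\mu_L=2=(-\k_\cX)\cdot[L]$. This is more direct and avoids the analysis of~$T$; the paper's route yields the sharper conclusion $B(\cZ_p)\subseteq\{b_{12}\}$ (ruling out $\bp_{12}$ and $\tb_{12}$ simultaneously), but that extra precision is not needed for the bound $\theta_p(\cX)\leq 1$.

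One point worth stating more explicitly in your final sentence: the conclusion $\theta_p(\cX)\leq 1$ uses not only that at most one of $F,F'$ admits a base point, but also that whichever one does has a \emph{single} base point. This is implicit in your setup (the only component with $g_0\cdot W\succ 0$ is $\{\bp_{12}\}$, and likewise for $g_1$), but spelling it out would make the counting airtight.
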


\begin{proof}
We know from \Cref{lem:d} that $\cZ_p$ is a complex dP surface of cyclicity two
and we assume \Wlog that $\cZ_p$ is non-real so that $\sigma_*=\id$ by convention.
Let $T$ denote the Zariski closure of $\cZ_p\cap \cH_p\setminus \cU_p$.
By B\'ezout's theorem $\cZ_p\cap \cH_p$ is of degree 6 when counted with multiplicity,
and thus $T$ is either a complex conic that is not a singular component or a complex double line.

It follows from \Cref{lem:BGE} that
$B(\cZ_p)\subset\{\tb_{12},b_{12},\bp_{12}\}$,
\[
G(\cZ_p)\subseteq\{g_0,g_1,g_{12}\}
\qquad\text{and}\qquad
E(\cZ_p)\subseteq\{e_1,e_2,e_{01},e_{02},e_{11},e_{12}\}.
\]
See \cite{2024celest} for a
\href{https://github.com/niels-lubbes/celestial-surfaces#intersection-numbers-for-lemma-22}{table of intersection numbers}
between these classes.
We consider the bijection $\Lambda\c\cF(\cZ_p)\to G(\cZ_p)$ from \RL{C}{b}.
Let $i\in\P^1$ be general.
By \Cref{lem:dpencil} there exist
complex pencils of conics~$F$ and~$F'$ on $\cZ_p$ \st
\[
\Lambda(F)=g_0,\qquad
\Lambda(F')=g_1
\quad\text{and}\quad
|F_i\cap \cU_p|=|F'_i\cap \cU_p|=2.
\]
Since $|F_i\cap\cH_p|=|F'_i\cap\cH_p|=2$ by B\'ezout's theorem, we find that
$|F_i\cap T\setminus\cU_p|=|F'_i\cap T\setminus\cU_p|=0$.
It follows from the fourth property at \RL{C}{d} that
\[
g_0\cdot[T]=g_1\cdot[T]=0.
\]
Since $[T]\notin G(\cZ_p)\subseteq\{g_0,g_1,g_{12}\}$,
we find that $T$ is either a reducible complex conic or a complex double line.
Therefore, $T=L\cup R$, where $L$ and $R$
are anticanonical projections of two complex lines in $\bA(\cZ_p)\subset\P^6$.
This implies that there exist $u,v\in E(\cZ_p)$ \st
$[T]=u+v$ and $u\cdot g_0=u\cdot g_1=v\cdot g_0=v\cdot g_1=0$,
and thus
\[
[T]=e_1+e_2.
\]
Since $g_0,g_1\in G(\cZ_p)$, $e_1,e_2\in E(\cZ_p)$
and $e_1\cdot \tb_{12}=e_2\cdot \tb_{21}=e_1\cdot \bp_{12}=-1$,
it follows from \Cref{lem:BGE} that
\[
B(\cZ_p)\subseteq\{b_{12}\}.
\]
First suppose that $B(\cZ_p)=\{b_{12}\}$.
It follows from \RL{C}{b}
that $F$ is base point free and $F'$ has exactly one base point~$\bb\in\cU_p$.
The preimage $B:=\pi^{-1}(\bb)\cap \cX$
is either 0-dimensional or a complex line.
By \Cref{lem:dpencil} there exist base point free pencils of circles~$M$ and~$M'$ on~$\cX$
\st
$F_i=\pr_p(M_i)$,
$F'_i=\pr_p(M'_i)$,
$\Lambda(M)=\l_0$ and
$\Lambda(M')=\l_1$.
Hence, $M'_i\cap B\neq \varnothing$ and thus $B\subset \cX\cap \U_p$
is a complex line.
Suppose by contradiction that $C\subset \cX\cap \U_p$ is a complex line
\st $C\neq B$.
In this case, either $\Lambda(M)\cdot [C]>0$
or $\Lambda(M')\cdot [C]>0$.
This implies that either $F$ or $F'$ has a base point $\pr_p(C)\neq\bb$.
We arrived at a contradiction and thus $\theta_p(\cX)=1$.
If $B(\cZ_p)=\varnothing$, then both $F$ and $F'$
are base point free so that $\theta_p(\cX)=0$ by a similar argument.
Hence, $\theta_p(\cX)\in\{0,1\}$ as was to be shown.
\end{proof}

\begin{proposition}
\label{prp:pr}
For all $p\in\Sing\cX$ the following holds:
\begin{Mlist}
\item $\bigl(\deg\cZ_p,~\theta_p(\cX)\bigr)\in\{(6,0),~(6,1),~(5,2),~ (4,4)\}$,
\item if $\theta_p(\cX)=4$, then $p\in\cX_\R$, and
\item if $\theta_p(\cX)=2$, then $\pr_p(\Sing\cX)=\Sing\cZ_p$.
\end{Mlist}
\end{proposition}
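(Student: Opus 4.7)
The plan is to observe that this proposition is a direct packaging of the three preceding lemmas together with \Cref{lem:d}. The approach is to fix $p\in\Sing\cX$, apply \Cref{lem:d} to conclude $\deg\cZ_p\in\{4,5,6\}$, and then split into three cases according to this degree. In each case, the relevant lemma has already done all the substantive work.

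First, if $\deg\cZ_p=4$, then \Cref{lem:d4} immediately yields $\theta_p(\cX)=4$ and $p\in\Sing\cX_\R$. This produces the admissible pair $(4,4)$ in the enumeration and, since $\Sing\cX_\R\subseteq\cX_\R$, simultaneously establishes the second bullet. Second, if $\deg\cZ_p=5$, then \Cref{lem:d5} gives both $\theta_p(\cX)=2$ and $\pr_p(\Sing\cX)=\Sing\cZ_p$, which yields the pair $(5,2)$ and also the third bullet. Third, if $\deg\cZ_p=6$, then \Cref{lem:d6} gives $\theta_p(\cX)\in\{0,1\}$, producing the remaining pairs $(6,0)$ and $(6,1)$.

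To conclude, I would note that since the three degree cases exhaust the trichotomy of \Cref{lem:d}, the first bullet is complete. Conversely, observe that in degrees $5$ and $6$ we have $\theta_p(\cX)\in\{0,1,2\}$, so the hypothesis $\theta_p(\cX)=4$ forces $\deg\cZ_p=4$, and then \Cref{lem:d4} supplies $p\in\cX_\R$. Similarly, $\theta_p(\cX)=2$ forces $\deg\cZ_p=5$, and \Cref{lem:d5} supplies $\pr_p(\Sing\cX)=\Sing\cZ_p$.

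There is no real obstacle in this proof: the technical work, namely the N\'eron-Severi lattice analyses of $\cZ_p$ (the classification of $B(\cZ_p)$, $G(\cZ_p)$ and $E(\cZ_p)$) and the tracking of base points of the pencils of circles on $\cX$ under $\pr_p$, has already been carried out in the degree-specific lemmas. The proposition is merely a bookkeeping consolidation of that case analysis into a single tabulated list of admissible pairs $(\deg\cZ_p,\theta_p(\cX))$, which is the form in which it will be convenient to cite it in subsequent sections.
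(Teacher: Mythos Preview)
Your proposal is correct and follows exactly the same approach as the paper: apply \Cref{lem:d} to reduce to $\deg\cZ_p\in\{4,5,6\}$ and then invoke \Cref{lem:d4,lem:d5,lem:d6} in each case. Your version is in fact more explicit than the paper's, which compresses the entire argument into a single sentence citing the four lemmas.
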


\begin{proof}
We know from \Cref{lem:d} that $4\leq\deg\cZ_p\leq 6$
and thus the proof is concluded by
\Cref{lem:d4,lem:d5,lem:d6}.
\end{proof}

\section{A reduction to pseudo singular types}
\label{sec:pseudo}

We introduce the ``pseudo singular type'' of $\cX$ whose data is a subset of~$\SingType\cX$.
If $\SingType\cX\in\{\Diii,\,\Div,\,\Dv\}$, then
this partial data turns out to be sufficient to recover $\SingType\cX$ completely.

Let $\vv:=(1:0:0:0:1)$ so that $\U_\vv=\set{x\in \S^3}{x_0-x_4=0}$.
We say that $\cX$ contains a \df{Bohemian quartet} $(L,R,\vv)$
if $\cX\cap \U_\vv$ consists of
two pairs of complex conjugate double lines
$L,\oL\subset\U_\vv$ and $R,\oR\subset\U_\vv$
\st
$[L]=[\oL]=\l_0$ and $[R]=[\oR]=\l_1$.
As a direct consequence of the definition, we find that
$L\cap R\cap\oL\cap\oR=\{\vv\}$.

Let $\E:=\set{x\in \S^3}{ x_0=0}$.
We say that $\cX$ contains a \df{Cliffordian quartet} $(L,R,\pp,\qq)$
if $\cX\cap\E$ consists of
two pairs of complex conjugate double lines
$R,\oR\subset\E$ and $L,\oL\subset\E$
\st
$L\cap R   =\{\pp\}$,
$\oL\cap\oR=\{\op\}$,
$L\cap\oR  =\{\qq\}$,
$\oL\cap R =\{\oq\}$,
$[L]=[\oL]=\l_0$ and $[R]=[\oR]=\l_1$.

We illustrated the incidences between the complex lines of Bohemian and Cliffordian quartets in \Cref{fig:quartet}.

\begin{figure}[!ht]
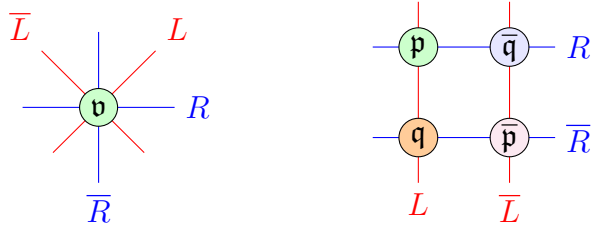

\centering
\csep{1cm}
\begin{tabular}{cc}
\Bohemianquartet & \Cliffordianquartet
\end{tabular}
\vspace{-3mm}
\caption{Incidences between complex lines in
Bohemian quartet (left) and Cliffordian quartet (right).}
\label{fig:quartet}
\end{figure}

\begin{theoremext}
\label{thm:quartet}
There exists a M\"obius transformation $\alpha\in\aut\S^3$ \st
$\alpha(\cX)$ contains either a Bohemian quartet or a Cliffordian quartet.
\end{theoremext}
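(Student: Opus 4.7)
The plan is to split on the maximum concurrency $\theta_\cX := \max_{p \in \Sing\cX} \theta_p(\cX)$, which by \Cref{prp:pr} lies in $\{0,1,2,4\}$.

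Suppose first that $\theta_\cX = 4$ and aim for a Bohemian quartet. Pick $p \in \Sing\cX$ with $\theta_p(\cX)=4$; \Cref{prp:pr} then gives $p\in\cX_\R$, and since $\aut\S^3$ acts transitively on $\S^3_\R$, I would choose $\alpha\in\aut\S^3$ with $\alpha(p)=\vv$. The four complex lines through $\vv$ inside $\alpha(\cX)\cap\U_\vv$ must each be a double singular component of $\alpha(\cX)$: by \Cref{prp:BGE8} we have $E(\alpha(\cX))=\varnothing$, so no line in $\alpha(\cX)$ is a smooth component, and by B\'ezout the hyperplane section $\alpha(\cX)\cap\U_\vv$ of degree $8$ is exactly exhausted by the four lines taken with multiplicity two. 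Complex conjugation fixes $\vv$ and permutes the four lines; none can be real, else $\alpha(\cX)_\R$ would contain a real line, contradicting that $\cX$ is covered by non-linear circles. So the four lines split into two complex conjugate pairs. Finally, the two pencils of circles on $\alpha(\cX)$ of classes $\l_0$ and $\l_1$ from \Cref{lem:dpencil} each have precisely two base points on $\U_\vv$, and these base points distribute the four lines into one pair of class $\l_0$ and one pair of class $\l_1$, producing the desired Bohemian quartet.

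Suppose now that $\theta_\cX\leq 2$ and aim for a Cliffordian quartet. The strategy is to exhibit four double lines $L,\oL,R,\oR\subset\cX$ with $[L]=[\oL]=\l_0$ and $[R]=[\oR]=\l_1$, lying in a common hyperplane $H\subset\P^4$ whose intersection with $\S^3$ is a smooth quadric. Starting from any singular point $p$ with $\theta_p(\cX)=2$, \Cref{lem:d5} supplies two complex conjugate double lines through $p$ on $\cX$, and the pencil analysis of \Cref{lem:dpencil} forces their classes to be $\l_0$ and $\l_1$ respectively. Each of the two pencils of circles on $\cX$ must therefore contain a degenerate member splitting as a pair of complex conjugate double lines; running this for both pencils gives two such pairs and hence four double lines in total. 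A degree count using that each of the four lines contributes multiplicity two to any hyperplane section containing it, together with $\deg\cX=8$, forces the four lines to span a single hyperplane $H$. This $H$ cannot be tangent to $\S^3$ at a single point, because that would give $\theta_\cX=4$, contradicting the case assumption; hence $H\cap\S^3$ is a smooth quadric surface. A M\"obius transformation mapping $H$ to $\E=\{x_0=0\}$ then produces a Cliffordian quartet in the image.

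The hard part is the second case. One must verify that in the low-concurrency regime the two pencils of circles really do contain degenerate members splitting as conjugate line pairs, that the four resulting lines have the correct classes and distinct pairwise intersections, and that they are coplanar in the sense of lying in a single hyperplane of $\P^4$. This requires careful tracking of the linear projection $\bA(\cX)\to \cX\subset\P^4$ and of which sections of the anticanonical class cut out the singular components; the key inputs are $B(\cX)=E(\cX)=\varnothing$ and $G(\cX)=\{g_0,g_1\}$ from \Cref{prp:BGE8}, the pencil bookkeeping of \Cref{lem:dpencil}, and the fine structure of $\cZ_p$ described in \Cref{lem:d5,lem:d6}.
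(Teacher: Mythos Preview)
The paper does not prove this statement; its proof is a one-line citation to \cite{2024fact}. So there is no internal argument to compare against, and your proposal is an attempt to supply what the paper outsources.

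Your Bohemian case ($\theta_\cX=4$) is essentially the content of \Cref{lem:d4} read backwards, and is close to correct. Two small corrections: the pencils $M,M'$ on $\cX$ are base point \emph{free} by \Cref{lem:dpencil} (the base points live on $\cZ_p$, not on $\cX$), so your phrasing ``each have precisely two base points on $\U_\vv$'' is wrong; what you want is that each $M_i$ meets $\U_\vv$ in two points lying on two of the four lines, and this intersection pattern determines the classes. Also, the reason no line is real is not that $\cX$ is covered by circles, but that $\S^3_\R\cong S^3$ contains no real lines at all.

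Your Cliffordian case has genuine gaps. First, you never show that $\theta_\cX\geq 2$ when $\theta_\cX\neq 4$: nothing established before \Cref{thm:quartet} rules out $\theta_\cX\in\{0,1\}$, and in that regime you have no starting point $p$ with $\theta_p(\cX)=2$. Second, your claim that \Cref{lem:d5} supplies two \emph{complex conjugate} double lines through $p$ is wrong in the relevant situation. If $p$ is real and $\theta_p(\cX)=2$, the two lines through $p$ are indeed conjugate, but then $\sigma_*$ fixes their class set, forcing both to have the \emph{same} class in $\{\l_0,\l_1\}$; yet in a Cliffordian quartet the two lines through each vertex $\pp,\op,\qq,\oq$ have \emph{different} classes $\l_0$ and $\l_1$, so those vertices are necessarily non-real and the two lines through them are \emph{not} conjugate to one another. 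Your degenerate-member argument for each pencil, and the subsequent degree count forcing a common hyperplane, are therefore built on a mis-identification of which lines pair up under conjugation, and you yourself flag the coplanarity step as unfinished. These are the substantive obstacles that the external reference \cite{2024fact} handles.
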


\begin{proof}
See \citep[Theorem~A(b) and Proposition~22]{2024fact}.
\end{proof}

\begin{definition}
\label{def:pt}
The \df{pseudo singular type} $\PseudoType\cX$
defined as $\Piii$, $\Piv$ or $\Pv$ if
the incidence relations of the 1-dimensional singular components of~$X$
are characterized by the corresponding diagram in \Cref{tab:P},
where
\begin{Mlist}
\item the line segments $L$, $\oL$, $R$, $\oR$ represent complex double lines that form a Cliffordian quartet $(L,R,\pp,\qq)$,
\item the loops~$V$, $V_1$, $V_2$, $C$ represent double curves \st
$\deg V=\deg\pr_\pp(V)+1=4$ and $\deg V_1=\deg V_2=\deg C=2$, and
\item the discs represent complex incidence points. \END
\end{Mlist}
\begin{table}[!ht]
\centering
\caption{Incidences between singular components for each pseudo singular type.}
\label{tab:P}
\csep{6mm}
\begin{tabular}{ccc}
{\bf P3} & {\bf P4} & {\bf P5} \\
\PIII & \PIV & \PV\\
$\deg V=4$ & $\deg V_1=\deg V_2=2$ & $\deg C=2$\\
\end{tabular}
\end{table}
\end{definition}

\begin{remark}
\label{rmk:part1}
If $\SingType\cX$ is equal to $\Diii$, $\Div$ or $\Dv$,
then as a straightforward consequence of the definitions $\PseudoType\cX$ is equal to $\Piii$, $\Piv$ and $\Pv$, \resp.
We shall show in this section that the converse also holds by
specifying the classes and names of the singular components.
\END
\end{remark}

\begin{lemma}
\label{lem:V}
If $\cX$ contains a Cliffordian quartet $(L,R,\pp,\qq)$
and $V\subset\Sing\cX$ is a possibly reducible curve \st
$V\setminus\E=\Sing\cX\setminus\E$,
$V\cap\E=\{\pp,\op,\qq,\oq\}$
and $\deg V=4$,
then $[V]=2\,\l_0+2\,\l_1$.
\end{lemma}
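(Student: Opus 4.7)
The plan is to determine $[V]=\alpha\,\l_0+\beta\,\l_1$ by combining a degree identity $\alpha+\beta=4$ with two lower bounds $\alpha,\beta\geq 2$ extracted from the Cliffordian quartet. Let $\varphi\c\P^1\times\P^1\to\cX$ be the birational morphism furnished by \Cref{prp:BGE8} (so $\bO(\cX)=\P^1\times\P^1$ and $B(\cX)=E(\cX)=\varnothing$), and set $\tilde V:=\varphi^{-1}(V)$.

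First I would establish $\alpha+\beta=4$. Every irreducible component of $V$ is a 1-dimensional singular component of the degree-$8$ celestial surface $\cX$, hence a double curve, so $\varphi_*\tilde V = 2V$ as a 1-cycle. The projection formula combined with $\varphi^*H=-\k=2\,\l_0+2\,\l_1$ (for $H$ the hyperplane class of $\S^3$) then gives
\[
2\deg V \;=\; H\cdot\varphi_*\tilde V \;=\; \varphi^*H\cdot\tilde V \;=\; (2\,\l_0+2\,\l_1)\cdot[V] \;=\; 2(\alpha+\beta),
\]
whence $\alpha+\beta=4$.

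Next I would prove $\beta\geq 2$, with $\alpha\geq 2$ following by symmetry. Let $\ell_L:=\varphi^{-1}(L)$, $\ell_{\oL}:=\varphi^{-1}(\oL)$, $\ell_R:=\varphi^{-1}(R)$, $\ell_{\oR}:=\varphi^{-1}(\oR)$; by \Cref{def:st}\ref{iv} these four curves have classes $\l_0,\l_0,\l_1,\l_1$ and form a $2\times 2$ grid on $\P^1\times\P^1$. Since $L\cap R=\{\pp\}$ and $\ell_L\subseteq\varphi^{-1}(L)$, $\ell_R\subseteq\varphi^{-1}(R)$, the grid point $\ell_L\cap\ell_R$ must map to $\pp$, and analogously the three remaining grid points map to $\qq,\oq,\op$. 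Because $V\cap\E$ is $0$-dimensional, $V\not\supset L$ and so $\tilde V\cap\ell_L$ is a proper intersection; because $\pp,\qq\in V$, the two distinct grid points $\ell_L\cap\ell_R$ and $\ell_L\cap\ell_{\oR}$ both lie in $\tilde V\cap\ell_L$. Therefore
\[
\beta \;=\; [\tilde V]\cdot[\ell_L] \;\geq\; |\tilde V\cap\ell_L| \;\geq\; 2,
\]
and combined with $\alpha+\beta=4$ this forces $\alpha=\beta=2$, i.e.\ $[V]=2\,\l_0+2\,\l_1$.

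The main difficulty I anticipate is justifying the equality $\varphi_*\tilde V=2V$, that is, that every component of the singular curve $V$ has multiplicity exactly two in $\cX$; for celestial surfaces of degree eight this should be a structural consequence of $\bO(\cX)\cong\P^1\times\P^1$ together with $B(\cX)=E(\cX)=\varnothing$ from \Cref{prp:BGE8}, since a component of higher multiplicity would require extra sheets in the smooth model that $\P^1\times\P^1$ does not accommodate.
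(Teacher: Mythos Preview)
Your argument has two genuine gaps, and they both show up precisely in the case the paper works hardest to exclude.

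\textbf{Step~2 does not follow.} That $V$ is a \emph{double curve} means a general point of~$V$ has multiplicity~$2$ in~$\cX$; it does \emph{not} mean that $\varphi|_{\tilde V}\c\tilde V\to V$ is generically $2:1$. A general hyperplane section of~$\cX$ could have a \emph{cuspidal} singularity along~$V$ rather than a nodal one, in which case the fibre $\varphi^{-1}(v)$ over a general $v\in V$ is a single (non-reduced) point and $\varphi_*\tilde V=V$, giving $\alpha+\beta=2$. Your closing paragraph conflates ``multiplicity exactly two'' with ``two sheets''; the former holds (by \Cref{prp:pr}), the latter is exactly what is at stake. The paper records this dichotomy as $\deg V_\bA\in\{4,8\}$.

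\textbf{Step~3 does not follow either.} You implicitly use that the grid points $\ell_L\cap\ell_R$ and $\ell_L\cap\ell_{\oR}$ lie on the \emph{curve}~$\tilde V$ (the one-dimensional part of $\varphi^{-1}(V)$, which is what determines~$[V]$). These grid points lie in $\varphi^{-1}(V)$ \emph{set-theoretically} because they map to $\pp,\qq\in V$, but the fibre $\varphi^{-1}(\pp)$ has up to three points (see \Cref{fig:eta}: $\pp_0,\pp_1,\pp_2$ with $\pp_0=\ell_L\cap\ell_R$), and $\tilde V$ may pass through $\pp_1$ or $\pp_2$ while missing the grid point~$\pp_0$. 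In the cuspidal scenario the paper finds $V_\bA\cap E_\bA=\{\pp_1,\op_1,\qq_2,\oq_2\}$, so $\tilde V\cap\ell_L=\{\pp_1\}$ is a single point and $\beta=1$, giving $[V]=\l_0+\l_1$. Your inequality $\beta\geq 2$ fails exactly here.

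The substance of the lemma is ruling out $[V]=\l_0+\l_1$; the paper does this by a contradiction argument showing that in that case $\pp_1$ would have to be a ramification point of the $2{:}1$ map $\ell_L\to L$, which is impossible since $\pp_0$ and $\pp_1$ are distinct points of $\ell_L$ with the same image~$\pp$ (see \Cref{fig:V4}). Your proposal bypasses this core difficulty.
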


\begin{proof}
Since $[L]=[\oL]=\l_0$ and $[R]=[\oR]=\l_1$,
we find that the preimage $E_\bA:=\eta^{-1}(\cX\cap\E)$ via the anticanonical projection~$\eta\c\bA(\cX)\to\cX$
consists of four complex irreducible conics whose four incidences
are as illustrated in the left diagram of \Cref{fig:eta}.
These complex conics
are projected 2:1 onto the complex lines in~$\E$ as is illustrated in the right diagram of \Cref{fig:eta}.
It follows from \Cref{prp:pr} that $\deg\pr_\pp(\cX)=5$ so that $\pp$ is a complex point of multiplicity three.
This implies that $\eta^{-1}(\pp)\subseteq \{\pp_0,\pp_1,\pp_2\}$
for some complex points~$\pp_0,\pp_1,\pp_2\in E_\bA$.
Using the same argument we find that
$\eta^{-1}(\op)\subseteq \{\op_0,\op_1,\op_2\}$,
$\eta^{-1}(\qq)\subseteq \{\qq_0,\qq_1,\qq_2\}$ and
$\eta^{-1}(\oq)\subseteq \{\oq_0,\oq_1,\oq_2\}$.
We illustrated the complex points in the complex fibers in \Cref{fig:eta}.
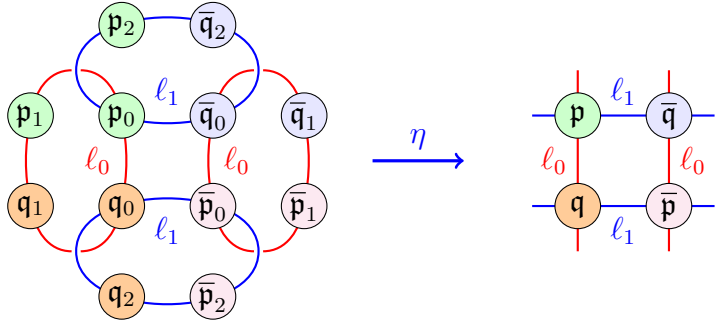
\begin{figure}[!ht]
\centering
\begin{tikzpicture}[scale=0.6]
\draw[red, thick] (-2.1,2) to [out=180, in=80] (-3,1) to [out=260, in=100] (-3,-1) to [out=280, in=180] (-2.1,-2);
\draw[red, thick] (-1.9,2) to [out=0, in=100] (-1,1)  to [out=280, in=80] (-1,-1)  to [out=260, in=0] (-1.9,-2);
\draw[red, thick] (2.1,2) to [out=0, in=100] (3,1)   to [out=280, in=80] (3,-1)   to [out=260, in=0] (2.1,-2);
\draw[red, thick] (1.9,2) to [out=180, in=80] (1,1) to [out=260, in=100] (1,-1) to [out=280, in=180] (1.9,-2);
\draw[blue, thick] (-2,2)  to [out=90, in=90] (2,2)  to [out=270, in=270] (-2,2);
\draw[blue, thick] (-2,-2) to [out=90, in=90] (2,-2) to [out=270, in=270] (-2,-2);
\node[blue,above] at (0,1)    {$\l_1$};
\node[blue,below] at (0,-1)   {$\l_1$};
\node[red,left] at  (-1,0)    {$\l_0$};
\node[red,right] at (+1,-0)   {$\l_0$};
\node[blue,above] at (3+7,1)  {$\l_1$};
\node[blue,below] at (3+7,-1) {$\l_1$};
\node[red,left]  at (3+6,0)   {$\l_0$};
\node[red,right] at (3+8,-0)  {$\l_0$};
\draw[blue, thick] (3+5,1)  -- (3+9,1);
\draw[blue, thick] (3+5,-1) -- (3+9,-1);
\draw[red, thick] (3+6,2)  -- (3+6,-2);
\draw[red, thick] (3+8,2)  -- (3+8,-2);
\draw[very thick, blue, ->] (4.5,0) -- node[above] {$\eta$} (6.5,0);
\draw[draw=black, fill=green!20] (-1,1) circle [radius=0.5] node {$\pp_0$};
\draw[draw=black, fill=green!20] (-3,1) circle [radius=0.5] node {$\pp_1$};
\draw[draw=black, fill=green!20] (-1,3) circle [radius=0.5] node {$\pp_2$};
\draw[draw=black, fill=blue!10] (1,1) circle [radius=0.5] node {$\oq_0$};
\draw[draw=black, fill=blue!10] (3,1) circle [radius=0.5] node {$\oq_1$};
\draw[draw=black, fill=blue!10] (1,3) circle [radius=0.5] node {$\oq_2$};
\draw[draw=black, fill=orange!40] (-1,-1) circle [radius=0.5] node {$\qq_0$};
\draw[draw=black, fill=orange!40] (-3,-1) circle [radius=0.5] node {$\qq_1$};
\draw[draw=black, fill=orange!40] (-1,-3) circle [radius=0.5] node {$\qq_2$};
\draw[draw=black, fill=magenta!10] (1,-1) circle [radius=0.5] node {$\op_0$};
\draw[draw=black, fill=magenta!10] (3,-1) circle [radius=0.5] node {$\op_1$};
\draw[draw=black, fill=magenta!10] (1,-3) circle [radius=0.5] node {$\op_2$};
\draw[draw=black, fill=green!20]       (3+6,1)  circle [radius=0.5] node {$\pp$};
\draw[draw=black, fill=blue!10]        (3+8,1)  circle [radius=0.5] node {$\oq$};
\draw[draw=black, fill=orange!40]      (3+6,-1) circle [radius=0.5] node {$\qq$};
\draw[draw=black, fill=magenta!10]     (3+8,-1) circle [radius=0.5] node {$\op$};
\end{tikzpicture}
\caption{Two pairs of complex conjugate conics in the anticanonical model~$\bA(\cX)$
are 2:1 linearly projected to
complex lines in the Cliffordian quartet $(L,R,\pp,\qq)$.
The fibers of the four complex incidence points have each cardinality at most three.
}
\label{fig:eta}
\end{figure}

Let $V_\bA:=\eta^{-1}(V)$.
It follows from \Cref{prp:pr} that $\deg\pr_v(\cX)=6$ for a general complex point~$v\in V$
and thus the possibly reducible curve~$V$ is of multiplicity two.
Since $\deg V=4$ we find that $\deg V_\bA\in\{4,8\}$.
By B\'ezout's theorem,
the four intersections of $V$ with the hyperplane section~$\E\subset\S^3$ must be transversal.
This implies that the intersections between $V_\bA$ and the hyperplane section~$E_\bA\subset \bA(\cX)$
must be transversal as well.
Hence, $|V_\bA\cap E_\bA|=\deg V_\bA$ by applying B\'ezout's theorem again.
Notice that $[V]\cdot (2\,\l_0+2\,\l_1)=\deg V_\bA$
because $2\,\l_0+2\,\l_1$ is the class of a hyperplane section of $\bA(\cX)\cong\P^1\times\P^1$.
We have $V_\bA\cap E_\bA\subset \set{\pp_i,\qq_i,\op_i,\oq_i}{i\in\{0,1,2\}}$,
as $V\cap\E=\{\pp,\op,\qq,\oq\}$ by assumption.
This implies that
$[V]\cdot\l_0=|V_\bA\cap\{\pp_0,\pp_1,\qq_0,\qq_1\}|=|V_\bA\cap\{\op_0,\op_1,\oq_0,\oq_1\}|$
and
$[V]\cdot\l_1=|V_\bA\cap\{\pp_0,\pp_2,\oq_0,\oq_2\}|=|V_\bA\cap\{\op_0,\op_2,\qq_0,\qq_2\}|$.
If $\deg V_\bA=8$, then it follows from the above characterization that
$V_\bA\cap E_\bA=\set{\pp_i,\qq_i,\op_i,\oq_i}{i\in\{1,2\}}$
so that $[V]=2\,\l_0+2\,\l_1$.
If $\deg V_\bA=4$, then we find for similar reasons that up to symmetry of the
diagram in \Cref{fig:eta} we have $V_\bA\cap E_\bA=\{\pp_1,\op_1,\qq_2,\oq_2\}$
so that $[V]=\l_0+\l_1$.

Now suppose by contradiction that $[V]=\l_0+\l_1$ so that $V_\bA\cap E_\bA=\{\pp_1,\op_1,\qq_2,\oq_2\}$.
In this case $V_\bA$ is projected 1:1 onto $V\subset\Sing \cX$.
Let $\tilde{\eta}\c\P^8\dto\P^4$ be the linear
projection that restricts to the anticanonical projection~$\eta$.
Since $\bA(\cX)$ is smooth by \Cref{prp:BGE8},
the fiber $\tilde{\eta}^{-1}(v)$ of a
general complex point $v\in V$
is a 4-dimensional complex plane that meets the center of projection
and that contains the complex tangent plane of $\bA(\cX)$ at a complex point in~$\eta^{-1}(v)\cap V_\bA$
(see \citep[pages 611--618]{1978} for background material).
Therefore, a general complex hyperplane section of~$\cX$ that contains $v$ has a cuspidal singularity at~$v$
(see \citep[page 292, Figure~10]{1978}).
Let the complex preimage~$L_\bA$ be defined as the Zariski closure of $\eta^{-1}(L\setminus\{\pp,\qq\})$.
Thus, $L_\bA$ is the leftmost conic in \Cref{fig:eta} that contains the complex points $\pp_0$, $\pp_1$, $\qq_0$ and $\qq_1$.
Suppose that $U$ is a complex analytic neighborhood of~$\bA(\cX)$ around~$\pp_1$.
We consider the complex analytic map $U\to\eta(U)$
and illustrated $V_\bA$, $L_\bA$ and their images \wrt this map diagrammatically in \Cref{fig:V4}.
\begin{figure}[!ht]
\centering
\begin{tabular}{ll}
\begin{tikzpicture}[scale=0.5]
\draw[thick] (-4,-2) to [out=50, in=270] (-2,0)
                     to [out=90, in=-50] (-4,2);

\draw[thick] (2,0) to [out=50, in=270] (4,2)
                   to [out=90, in=-50] (2,4);

\draw[thick] (-4,-2) to (2,0);
\draw[thick] (-4,2) to (2,4);

\draw[red] (0,-0.65) to [out=20, in=270] (1,1)
                        to [out=90, in=0] (-1,3);

\draw[blue] (-2,0) to (4,2);

\draw (1.4,2.2) node[red] {$L_\bA$};
\draw (-1.3, 0.8) node[blue] {$V_\bA$};

\draw[draw=black, fill=green!20] (1, 1)   circle [radius=0.5] node[black] {$\pp_1$};
\draw[draw=black, fill=green!20] (0, 2.7) circle [radius=0.5] node[black] {$\pp_0$};

\draw[very thick,->] (4.5,0) -- node[above] {$\eta$} (6.5,0);

\end{tikzpicture}
&
\begin{tikzpicture}[scale=0.5]
\draw[thick] (-4,-2) to [out=50, in=190] (-2,0)
                     to [out=170, in=-50] (-4,2);

\draw[thick] (2,0) to [out=50, in=190] (4,2)
                   to [out=170, in=-50] (2,4);

\draw[thick] (-4,-2) to (1,1) to (2,0);
\draw[thick] (-4, 2) to (-1,1.6) to (2,4);

\draw[very thick, red] (-1,1.6) to (1,1) ;
\draw[very thick, blue] (-2,0) to (4,2);

\draw (-1.5,0.6) node[blue] {$V$};
\draw (0,1.8) node[red] {$L$};

\draw[draw=black, fill=green!20] (1, 1) circle [radius=0.4] node[black] {$\pp$};

\end{tikzpicture}
\end{tabular}
\caption{A diagrammatic depiction of a complex analytic neighborhood~$U\subset \bA(\cX)$
around~$\pp_1$ and its anticanonical projection into~$\eta(U)$.
The complex conic $L_\bA$ is 2:1 linearly projected to the double line $L$.
The quartic curve $V_\bA$ is 1:1 linearly projected to the singular component~$V$
and thus $\pp_1$ must be a complex ramification point.}
\label{fig:V4}
\end{figure}
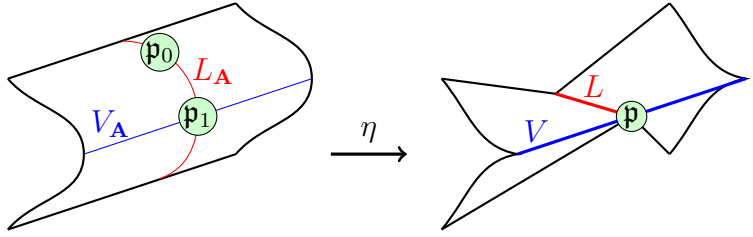

We find that $\pp_1\in V_\bA\cap E_\bA$ must be a complex ramification point of the 2:1 map $\eta|_{_{L_\bA}}\c L_\bA\to L$.
We arrived at a contradiction as $\pp_0,\pp_1\in L_\bA$ and $\eta(\pp_0)=\eta(\pp_1)=\pp\in L$.

We concluded the proof as $[V]=2\,\l_0+2\,\l_1$ is the only remaining option.
\end{proof}

\begin{proposition}
\label{prp:D345}
If $\PseudoType\cX\in\{\Piii,\,\Piv,\,\Pv\}$, then
\[
\SingType\cX\in\{\Diii,\,\Div,\,\Dv\}.
\]
\end{proposition}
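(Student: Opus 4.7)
The plan is to verify each of the five properties of Definition~\ref{def:st} in the three cases, under the expected correspondence $\Piii\leftrightarrow\Diii$, $\Piv\leftrightarrow\Div$, $\Pv\leftrightarrow\Dv$. Property (i) is the standing hypothesis, and the incidence graph of (ii) is inherited from the pseudo type diagram together with the identifications of the names of the double curves made below. For the common preliminaries, the Cliffordian quartet definition fixes $[L]=[\oL]=\l_0$ and $[R]=[\oR]=\l_1$. At each quartet point $\pp,\op,\qq,\oq$ only the two complex quartet lines meet $\cX$ (since $E(\cX)=\varnothing$ by Proposition~\ref{prp:BGE8}), so $\theta_\bullet(\cX)=2$; Proposition~\ref{prp:pr} then gives $\deg\cZ_\bullet=5$, hence multiplicity $8-5=3$, and non-reality (since $\sigma_X$ exchanges $\pp\leftrightarrow\op$ and $\qq\leftrightarrow\oq$). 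Thus the triple $(3,2,0)$ required by property (iii) holds at each quartet point.

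For $\PseudoType\cX=\Piii$, the remaining 1-dimensional singular component is a single irreducible double curve $V$ of degree~$4$ meeting $\E$ exactly at the four quartet points. Lemma~\ref{lem:V} applies directly and yields $[V]=2\l_0+2\l_1$. To identify $V$ as a twisted quartic, I show that $V$ spans $\P^4$ (non-degeneracy follows from $\deg\pr_\pp(V)=3$ given by the pseudo type data) and is rational (from the bidegree $(2,2)$ structure of the preimage in $\P^1\times\P^1$ via a Riemann--Hurwitz count on the $2{:}1$ map to $V$). Property (v) is then immediate, because $\l_i\cdot(2\l_0+2\l_1)=2$ for either circle class $i\in\{0,1\}$ matches the set-theoretic count of a generic circle meeting $V$.

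For $\PseudoType\cX=\Piv$, the components $V_1,V_2$ each of degree~$2$ connect $\pp,\op$ and $\qq,\oq$ respectively, meeting at one extra complex point $t\notin\E$. Applying Lemma~\ref{lem:V} to the reducible curve $V_1\cup V_2$ gives $[V_1]+[V_2]=2\l_0+2\l_1$, and each summand $[V_i]=\alpha_i\l_0+\beta_i\l_1$ satisfies $\alpha_i+\beta_i=2$ by the degree-multiplicity relation. The key step is to exclude $[V_i]\in\{2\l_0,2\l_1\}$: I argue that class $2\l_0$ would force the set-theoretic preimage of $V_1$ in $\P^1\times\P^1$ to be supported on $\l_0$-fibers, which together with the anticanonical projection realizing the double-sheet structure along $V_1$ is incompatible with $V_1$ passing through both $\pp$ and $\op$ (which lie on the complex conjugate $\l_1$-fibers preimaging $R$ and $\oR$). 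Thus $[V_1]=[V_2]=\l_0+\l_1$, and since each $V_i$ is irreducible and preserved by $\sigma_X$ (which swaps its endpoints), they are circles. At the extra incidence $t$, $\theta_t=0$ gives $\deg\cZ_t=6$ and multiplicity~$2$ by Proposition~\ref{prp:pr}; since $\sigma_X$ preserves $V_1\cap V_2\setminus\E=\{t\}$ as a set, $t$ is real, yielding the $(2,0,1)$ triple of property (iii).

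For $\PseudoType\cX=\Pv$, the double conic $C$ meets $R$ and $\oR$ at two new complex conjugate points (away from the quartet) and is disjoint from $L,\oL$ in $\cX$. By the first bullet of Lemma~\ref{lem:C}\ref{lem:C:d}, $C\cap L=\varnothing$ forces $[C]\cdot\l_0=0$, so $[C]=\alpha\l_0$ for some $\alpha\geq 1$; the degree-$2$ constraint narrows this to $\alpha\in\{1,2\}$, which matches the two sub-cases listed in the class table of property (iv). At the two new points we have $\theta=1$, and Proposition~\ref{prp:pr} gives multiplicity~$2$, which together with the conjugation pairing yields the $(2,1,0)$ triple of property (iii). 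The main obstacle across the three cases is the $\Piv$ case: the step forcing $[V_i]=\l_0+\l_1$ rather than $2\l_0$ or $2\l_1$ requires a careful analysis of how the two sheets of $\cX$ meeting along $V_i$ are realized on the smooth model $\bA(\cX)=\P^1\times\P^1$ and how the anticanonical projection identifies them at the quartet incidences.
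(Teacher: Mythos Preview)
Your overall plan matches the paper's: verify the five items of Definition~\ref{def:st} case by case, using \Cref{lem:V} for the class of the non-quartet part of $\Sing\cX$ and \Cref{prp:pr} for multiplicities. The handling of the quartet points and of the $\Pv$ case is essentially identical to the paper's. Two of your technical steps, however, are not justified as written.

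For $\Piii$, your rationality argument via ``a Riemann--Hurwitz count on the $2{:}1$ map to $V$'' is not a proof: a smooth $(2,2)$-curve in $\P^1\times\P^1$ has genus~$1$, and Riemann--Hurwitz from a genus~$1$ double cover only gives $g(V)\leq 1$, so you would first have to control singularities or ramification of the preimage, which you do not. In fact your own non-degeneracy argument already does the job (a degree~$4$ curve spanning $\P^4$ is a rational normal curve); the paper instead observes that $\pr_\pp(V)\subset\cZ_\pp$ is a double cubic in a quintic surface, hence non-planar by B\'ezout, hence a twisted cubic and therefore rational, and then lifts this back to~$V$.

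For $\Piv$, the crucial step is excluding $[V_i]\in\{2\l_0,2\l_1\}$. Your argument that $[V_1]=2\l_0$ is ``incompatible with $V_1$ passing through both $\pp$ and $\op$'' does not go through as stated: the points $\pp,\op$ also lie on the $\l_0$-fibres $L,\oL$, and nothing you wrote prevents two $\l_0$-fibres from mapping isomorphically onto the same conic passing through $\pp$ and $\op$. The paper's exclusion is purely numerical and short: the extra incidence point $v\in V_1\cap V_2$ has multiplicity~$2$ (you established this), hence $|\eta^{-1}(v)|\leq 2$; but $|\eta^{-1}(v)|=[V_1]\cdot[V_2]$, and $2\l_0\cdot 2\l_1=4>2$. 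Together with the constraint $-\k\cdot[V_i]\in\{2,4\}$ (since $\eta^{-1}(V_i)\to V_i$ is $1{:}1$ or $2{:}1$ onto a conic), this forces $[V_1]=[V_2]=\l_0+\l_1$. Replacing your geometric sketch by this intersection-number bound closes the gap.
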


\begin{proof}
It follows from \Cref{prp:BGE8} that the dP surface $\cX$ has no isolated singularities
and the class of a circle through a general point is in~$\{\l_0,\l_1\}$.
By definition, $-\k=2\,\l_0+2\,\l_1$ is the class of a hyperplane section of $\cX$.
Hence, \ref{i} and \ref{ii} and the first three items at \ref{iv} for \Cref{def:st}
are a direct consequence of the definitions of~$\cX$ and Cliffordian quartets.

It follows from \Cref{prp:BGE8} that
there exists a desingularization $\varphi\c \P^1\times\P^1\to\cX$.
Let $F_i$ for general $i\in\P^1_\R$ denote the fiber of the projection of $\P^1\times\P^1$
to the first component~$\P^1$.
Suppose that $W\subset \P^1\times\P^1$ is a possibly reducible curve \st $\varphi(W)\subset\Sing\cX$
is a singular component.
Since $\varphi(F_i)$ is isomorphic to a circle through a general point of $\cX$,
we find that $|W\cap F_i|=|\varphi(W\cap F_i)|$.
Since $|W\cap F_i|=[W]\cdot [F_i]$, it follows that \ref{v} is satisfied.

Let us consider the Clifford quartet $(L,R,\pp,\qq)$
and singular components~$V$, $V_1$, $V_2$ and $C$ at \Cref{def:pt}.
It follows from \Cref{prp:pr} that
a complex point~$p\in \Sing\cX$ has multiplicity $\max(2,1+\theta_p(\cX))$.
Thus, if $V_1\cap V_2\subset\cX_\R$, then \ref{iii} is satisfied.

It remains to show the last four items at \ref{iv}
and that $V_1\cap V_2\subset\cX_\R$.

First, let us suppose that $\PseudoType\cX=\Piii$ so that $\deg \pr_\pp(V)=3$.
We know from \Cref{lem:V} that $[V]=2\,\l_0+2\,\l_1$.
By B\'ezout's theorem, the complex double cubic~$\pr_\pp(V)$ is
not contained in a hyperplane section of
the quintic complex surface~$\pr_\pp(\cX)$ and thus must be rational by \citep[Example~IV.6.4.2]{1977}.
Therefore, $V\subset\P^4$ is a quartic rational curve that is not contained in hyperplane section,
which implies that $V$ is a twisted quartic and thus $\SingType\cX=\Diii$.

Next, we suppose that $\PseudoType\cX=\Piv$ so that $[V_1\cup V_2]=2\,\l_0+2\,\l_1$ by \Cref{lem:V}.
By assumption, $\sigma_\cX(V_1\cup V_2)=V_1\cup V_2$ and $|V_1\cap V_2|=1$.
Hence, $\sigma_\cX(V_1\cap V_2)=V_1\cap V_2$ so that $|(V_1)_\R|=|(V_2)_\R|=\infty$ and $V_1\cap V_2\subset\cX_\R$.
It follows that $V_1$ and $V_2$ are double circles \st $[V_1]+[V_2]=2\,\l_0+2\,\l_1$.
Thus, up to $\aut N(\cX)$, we have
$([V_1],[V_2])\in\{(\l_0+\l_1,\l_0+\l_1),\,(2\,\l_0+\l_1,\l_1),\,(2\,\l_0,2\,\l_1)\}$.
For all $i\in\{1,2\}$, the preimage $\eta^{-1}(V_i)$ is
projected either 2:1 or 1:1 onto $V_i$,
and thus $\deg \eta^{-1}(V_i)=-\k\cdot [V_i]\neq 3$.
We already established that the point $v\in V_1\cap V_2$ has multiplicity two,
which implies that $|\eta^{-1}(v)|=[V_1]\cdot [V_2]\leq 2$.
We deduce that $[V_1]=[V_2]=\l_0+\l_1$ and thus $\SingType\cX=\Div$.

Finally, we suppose that $\PseudoType\cX=\Pv$.
Since $|C\cap L|=0$, it follows from \RL{C}{d} that $[C]\cdot [L]=0$,
which implies that $[C]=\alpha\,\l_0$ for $\alpha\geq 1$.
Since $C$ is by assumption a double conic, we deduce that~$\alpha\leq 2$.
Hence, $\SingType\cX=\Dv$ and we concluded the proof.
\end{proof}

\section{The classification of singular types}
\label{sec:part2}

In this section, we show that if $\cX$ contains a Cliffordian quartet,
then
\[
\PseudoType\cX\in\{\Piii,\,\Piv,\,\Pv\}.
\]
Moreover, if $\cX$ contains a Bohemian quartet, then $\SingType\cX\in\{\Di,\Dii\}$.
Our method is based on the ``sectional delta invariant'', which
is defined as the sum of delta invariants of singularities of a general hyperplane section
(see \citep[\textsection3]{2024great}).
We conclude this section with a proof of the main result \Cref{thm:s}
and \Cref{cor:BC,cor:deg}.

\begin{lemma}
\label{lem:A4}
If $\cX$ contains a Cliffordian quartet~$(L,R,\pp,\qq)$
and $V_1,V_2\subset\Sing\cX$ are irreducible conics
\st $\Sing\cX\setminus\E=(V_1\cup V_2)\setminus\E$
and $(V_1\cup V_2)\cap\E=\{\pp,\op,\qq,\oq\}$,
then $\PseudoType\cX=\Piv$.
\end{lemma}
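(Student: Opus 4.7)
My plan is to verify that the 1-dimensional singular components of $\cX$ exhibit the incidence pattern of diagram $\Piv$: the four double lines $L, \oL, R, \oR$ of the Cliffordian quartet, together with two double conics $V_1, V_2$ meeting the quartet at the four corners split into two complex conjugate pairs, and a single additional complex point where $V_1$ and $V_2$ intersect.

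First I would show $V_1$ and $V_2$ are double curves. By \Cref{prp:BGE8} the only complex lines in $\cX$ are $L, \oL, R, \oR$, and an irreducible conic $V_i$ meets each of these in at most $2$ points, so a general $v \in V_i$ satisfies $\theta_v(\cX) = 0$. \Cref{prp:pr} then forces $\deg \pr_v(\cX) = 6$, giving $v$ multiplicity $8-6 = 2$ in $\cX$. Next I would apply \Cref{lem:V} to $V := V_1 \cup V_2$, whose hypotheses are immediate, to obtain $[V_1] + [V_2] = 2\l_0 + 2\l_1$. The class of an irreducible double conic must lie in $\{2\l_0,\, \l_0+\l_1,\, 2\l_1\}$, since a class in $\{\l_0, \l_1\}$ would make $\eta^{-1}(V_i) \to V_i$ an isomorphism, contradicting $V_i$ being a double curve. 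To rule out $[V_i] = 2\l_0$, note that $\eta^{-1}(V_i)$ would then be a union of two fibers of class $\l_0$, pairwise disjoint from the distinct fibers $\eta^{-1}(L), \eta^{-1}(\oL)$; surjectivity of $\eta$ would then force $V_i \cap L = V_i \cap \oL = \varnothing$, but every corner lies on $L \cup \oL$ and $V_i \cap \E \subseteq \{\pp, \op, \qq, \oq\}$ by hypothesis, so $V_i \cap \E = \varnothing$, contradicting B\'ezout applied to a conic and a hyperplane in $\P^4$. Symmetrically $[V_i] \neq 2\l_1$, whence $[V_1] = [V_2] = \l_0 + \l_1$.

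Then I would pin down the incidences. B\'ezout gives $|V_i \cap \E| = 2$, and together with $(V_1 \cup V_2) \cap \E = \{\pp, \op, \qq, \oq\}$ the two sets partition the four corners. To exclude the ``collinear'' partitions, suppose $V_1 \cap \E = \{\pp, \qq\} \subset L$; then $V_1 \cap \oL = \varnothing$ in $\cX$ (since neither $\pp$ nor $\qq$ is on $\oL$), yet $[V_1] \cdot [\oL] = (\l_0+\l_1) \cdot \l_0 = 1$ forces $\eta^{-1}(V_1) \cap \eta^{-1}(\oL) \neq \varnothing$, whose image supplies a point in $V_1 \cap \oL$, a contradiction. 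The analogous argument with $[V_1] \cdot [\oR]$ rules out the $R$-collinear partition, leaving the conjugate partition as the only option. Finally, $V_1 \cap V_2$ lies off $\E$, so each $v \in V_1 \cap V_2$ has $\theta_v(\cX) = 0$ and multiplicity $2$ with $|\eta^{-1}(v)| = 2$; both preimages lie in $\eta^{-1}(V_1) \cap \eta^{-1}(V_2)$, each contributing at least $1$ to the intersection length $[V_1] \cdot [V_2] = 2$, so $|V_1 \cap V_2| = 1$, providing the single extra incidence point of the $\Piv$ diagram.

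The principal obstacle is the class analysis: ruling out $[V_i] \in \{2\l_0, 2\l_1\}$ and then the collinear partitions of the corners requires the simultaneous use of \Cref{lem:V}, the intersection product on $N(\cX)$, B\'ezout in $\P^4$, and the surjectivity of $\eta\c\bA(\cX)\to\cX$.
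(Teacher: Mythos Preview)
Your overall plan differs substantively from the paper's. The paper never computes $[V_1],[V_2]$ inside this lemma; from $[V_1]+[V_2]=2\l_0+2\l_1$ it only extracts $[V_1]\cdot[V_2]>0$ (hence $V_1\cap V_2\neq\varnothing$ via \RL{C}{d}), then lists the three possible incidence diagrams $\Bi,\Bii,\Biii$ and rules out $\Bi,\Bii$ by stereographically projecting from a point $u\in V_1\cap V_2$ and deriving a B\'ezout degree contradiction in the hyperplane section of $\cZ_u$ containing the projected double lines. The determination $[V_1]=[V_2]=\l_0+\l_1$ is deferred to \Cref{prp:D345}, where the already established fact $|V_1\cap V_2|=1$ is used to bound $[V_1]\cdot[V_2]\le 2$ and thereby exclude $(2\l_0,2\l_1)$. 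You have reversed this logical order.

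Your argument has two genuine gaps. First, the step ruling out $[V_i]=2\l_0$ does not work: disjointness of the curve parts $W_i$ and $W_L$ in $\P^1\times\P^1$ does \emph{not} imply $V_i\cap L=\varnothing$ in $\cX$, because the set-theoretic preimage $\varphi^{-1}(V_i)$ can contain isolated points outside $W_i$ (precisely at points where $V_i$ meets another singular component, such as the corners). So ``surjectivity of $\eta$'' does not force $V_i\cap L=\varnothing$, and your subsequent conclusion $V_i\cap\E=\varnothing$ collapses. Since your exclusion of the collinear partition relies on $[V_1]=\l_0+\l_1$, this gap propagates forward. Second, the bound $|V_1\cap V_2|\le 1$ is not justified: you assert that both preimages of $v\in V_1\cap V_2$ lie in $W_1\cap W_2$ and hence each contributes to $[W_1]\cdot[W_2]$, but a preimage $q\in W_1$ of $v$ need not lie on $W_2$ (for instance if $W_2\to V_2$ is ramified over $v$; \RL{ram}{b} allows up to two such ramification points when $[W_2]=\l_0+\l_1$). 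Thus $v$ may contribute fewer than $2$ to $[W_1]\cdot[W_2]=2$, and the configuration $|V_1\cap V_2|=2$ (diagram $\Bii$) is not excluded by your count. The paper's projection-from-$u$ argument is precisely what eliminates both the collinear partition and the two-point intersection; your intersection-theoretic shortcut does not close these cases without it.
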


\begin{proof}
Since $[V_1\cup V_2]=[V_1]+[V_2]=2\,\l_0+2\,\l_1$ by \Cref{lem:V}, we find that
$[V_1]\cdot[V_2]>0$ and thus $|V_1\cap V_2|>0$.
As $\E$ is a hyperplane section of~$\S^3$
it follows from B\'ezout's theorem that
$(V_1\cap\E,~V_2\cap\E)$ is \Wlog equal to either
$(\{\pp,\qq\},~\{\op,\oq\})$ or
$(\{\pp,\op\},~\{\qq,\oq\})$.
We observe that either $\sigma_\cX(V_1)=V_2$ or $\sigma_\cX(V_i)=V_i$ for $i\in\{1,2\}$.
If $V_1\cap\E=\{\pp,\qq\}$, then $\sigma_\cX(V_1)=V_2$ and thus $|V_1\cap V_2|=2$.
It now follows that the incidences between the singular components
are characterized by diagram~$\Bi$, $\Bii$ or $\Biii$ in \Cref{tab:A4}.

\begin{table}[!ht]
\caption{Incidences between $V_1$, $V_2$, $L$, $\oL$, $R$ and $\oR$.}
\label{tab:A4}
\centering
\csep{5mm}
\begin{tabular}{cccc}
{\bf B1} & {\bf B2} & {\bf B3}
\\
\begin{tikzpicture}[scale=0.45]
\draw[thick, red] (-3.5, 2) -- ( 3.5, 2) node[right] {$R$};
\draw[thick, red] (-3.5,-2) -- ( 3.5,-2) node[right] {$\oR$};
\draw[thick, red] (-2, 3.5) -- (-2,-3.5) node[below] {$L$};
\draw[thick, red] ( 2, 3.5) -- ( 2,-3.5) node[below] {$\oL$};
\draw[thick, colGreen] (-2,2) to [out=180, in=180] (-2,-2) to [out=0, in=-90] (0.5,0) to [out=90, in=0] (-2,2);
\draw[thick, colGray] (2,2) to [out=0, in=0] (2,-2) to [out=180, in=-90] (-0.5,0) to [out=90, in=180] (2,2);
\draw[draw=black, fill=green!20]    (-2, 2) circle [radius=0.7] node {$\pp$};
\draw[draw=black, fill=blue!10]     ( 2, 2) circle [radius=0.7] node {$\oq$};
\draw[draw=black, fill=orange!40]   (-2,-2) circle [radius=0.7] node {$\qq$};
\draw[draw=black, fill=magenta!10]  ( 2,-2) circle [radius=0.7] node {$\op$};
\draw[draw=black, fill=white] (0,1) circle [radius=0.6] node[black] {$u$};
\draw[draw=black, fill=white] (0,-1) circle [radius=0.6] node[black] {$v$};
\node[colGreen] at (-3.8,0) {$V_1$};
\node[colGray] at (3.8,0) {$V_2$};
\end{tikzpicture}
&
\begin{tikzpicture}[scale=0.45]
\draw[thick, red] (-3.5, 2) -- ( 3.5, 2) node[right] {$R$};
\draw[thick, red] (-3.5,-2) -- ( 3.5,-2) node[right] {$\oR$};
\draw[thick, red] (-2, 3.5) -- (-2,-3.5) node[below] {$L$};
\draw[thick, red] ( 2, 3.5) -- ( 2,-3.5) node[below] {$\oL$};
\draw[thick, colGreen] (-2,2) to [out=-5, in=135] (0,1) to [out=-45, in=135] (1,0) to [out=-45, in=95] (2,-2);
\draw[thick, colGreen] (2,-2) to [out=175, in=-45] (0.1,-1.1);
\draw[thick, colGreen] (-0.1,-0.9) to [out=135, in=-45] (-0.9,-0.1 );
\draw[thick, colGreen] (-1.1,0.1) to [out=135, in=275] (-2,2);
\draw[thick, colGray] (-2,-2) to [out=85, in=225] (-1,0) to [out=225, in=225] (0,1) to [out=45, in=185] (2,2);
\draw[thick, colGray] (-2,-2) to [out=5, in=225] (0,-1) to [out=225, in=225] (0.9,-0.1);
\draw[thick, colGray] (1.1,0.1) to [out=45, in=265] (2,2);
\draw[draw=black, fill=green!20]    (-2, 2) circle [radius=0.7] node {$\pp$};
\draw[draw=black, fill=blue!10]     ( 2, 2) circle [radius=0.7] node {$\oq$};
\draw[draw=black, fill=orange!40]   (-2,-2) circle [radius=0.7] node {$\qq$};
\draw[draw=black, fill=magenta!10]  ( 2,-2) circle [radius=0.7] node {$\op$};
\draw[draw=black, fill=white] (0,1) circle [radius=0.6] node[black] {$u$};
\draw[draw=black, fill=white] (0,-1) circle [radius=0.6] node[black] {$v$};
\node[colGreen] at (-2.5,0.8) {$V_1$};
\node[colGray] at (-2.5,-0.8) {$V_2$};
\end{tikzpicture}
&
\begin{tikzpicture}[scale=0.45]
\draw[thick, red] (-3.5, 2) -- ( 3.5, 2) node[right] {$R$};
\draw[thick, red] (-3.5,-2) -- ( 3.5,-2) node[right] {$\oR$};
\draw[thick, red] (-2, 3.5) -- (-2,-3.5) node[below] {$L$};
\draw[thick, red] ( 2, 3.5) -- ( 2,-3.5) node[below] {$\oL$};
\draw[thick, colGreen] (-2,2) to [out=-5, in=135] (0,1) to [out=-45, in=135] (1,0) to [out=-45, in=95] (2,-2);
\draw[thick, colGreen] (2,-2) to [out=175, in=-45] (0.1,-1.1);
\draw[thick, colGreen] (-0.1,-0.9) to [out=135, in=-45] (-0.9,-0.1 );
\draw[thick, colGreen] (-1.1,0.1) to [out=135, in=275] (-2,2);
\draw[thick, colGray] (-2,-2) to [out=85, in=225] (-1,0) to [out=225, in=225] (0,1) to [out=45, in=185] (2,2);
\draw[thick, colGray] (-2,-2) to [out=5, in=225] (0,-1) to [out=225, in=225] (0.9,-0.1);
\draw[thick, colGray] (1.1,0.1) to [out=45, in=265] (2,2);
\draw[draw=black, fill=green!20]    (-2, 2) circle [radius=0.7] node {$\pp$};
\draw[draw=black, fill=blue!10]     ( 2, 2) circle [radius=0.7] node {$\oq$};
\draw[draw=black, fill=orange!40]   (-2,-2) circle [radius=0.7] node {$\qq$};
\draw[draw=black, fill=magenta!10]  ( 2,-2) circle [radius=0.7] node {$\op$};
\draw[draw=black, fill=white] (0,1) circle [radius=0.6] node[black] {$u$};
\node[colGreen] at (-2.5,0.8) {$V_1$};
\node[colGray] at (-2.5,-0.8) {$V_2$};
\end{tikzpicture}
\end{tabular}
\end{table}

Suppose by contradiction that $\Bi$ holds.
In this case $\pr_u(L)=\pr_u(V_1)$ and $\pr_u(\oL)=\pr_u(V_2)$.
This implies that $\pr_u(L)\cap\pr_u(\oL)=\{\pr_u(v)\}$
and thus $\pr_u(L),\pr_u(\oL)\subset\cZ_u$ are complex and coplanar lines of multiplicity at least three.
Since $\pr_u(\{\pp,\op,\qq,\oq\})\subset\pr_u(L\cup\oL)$, it follows that
$\pr_u(L\cup\oL\cup R\cup \oR)$ lies in a hyperplane section of $\cZ_u$,
where $\pr_u(R),\pr_u(\oR)\subset\cZ_u$ are complex double lines.
We arrived at a contradiction as $\deg\cZ_u\geq 3+3+2+2$ by B\'ezout's theorem.

Next, we suppose by contradiction that $\Bii$ holds.
We observe that $\pr_u(L\cup \oL\cup R\cup\oR\cup V_1\cup V_2)$
consist of six complex double lines that are
contained in a complex hyperplane section of~$\cZ_u$.
We arrived again at a contradiction, since $\cZ_u$
must be of degree at least twelve by B\'ezout's theorem.

We established that $\Biii$ holds and
thus we conclude that $\PseudoType\cX=\Piv$.
\end{proof}

Let $X\subset\P^n$ be a complex surface.
The \df{sectional arithmetic genus}~$a(X)$
and the \df{sectional geometric genus}~$g(X)$
are defined
as the arithmetic genus and geometric genus of a general hyperplane section of $X$, \resp.
The \df{total delta invariant} of~$X$ is defined as $\delta(X):=a(X)-g(X)$.

\begin{lemma}
\label{lem:delta}
For all $p\in \Sing\cX$, we have $g(\cX)=g(\cZ_p)=1$,
$\delta(\cX)=8$ and
\[
\delta(\cZ_p)=\tfrac{1}{2}\cdot\deg\cZ_p\cdot(\deg\cZ_p-3).
\]
\end{lemma}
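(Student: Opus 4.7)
The plan is to establish the three claims separately: for the geometric genera, use adjunction on the smooth model; for $\delta(\cZ_p)$, reduce to the plane-curve genus formula since $\cZ_p \subset \P^3$; and for $\delta(\cX) = 8$, compute the bidegree of a general hyperplane section on the smooth quadric $\S^3 \cap H_{\P^4}$.

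For the geometric genera, both $\cX$ (by \Cref{prp:BGE8}) and $\cZ_p$ (by \Cref{lem:d}) are dP surfaces, so by definition the hyperplane class on the smooth model $\bO(\cdot)$ equals the anticanonical class $-\k$. By adjunction a smooth curve of class $-\k$ on the smooth model has arithmetic genus $1+\tfrac{1}{2}(-\k)\cdot 0 = 1$. A general hyperplane section of the surface pulls back to a member of the sub-linear-system of $|-\k|$ cut out by hyperplanes of the ambient projective space; by Bertini this member is smooth away from a finite base locus, so its geometric genus is $1$. Since geometric genus is a birational invariant under the desingularization, $g(\cX) = g(\cZ_p) = 1$.

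For $\delta(\cZ_p)$, since $\cZ_p \subset \P^3$, a general hyperplane section $H$ is a plane curve of degree $d := \deg \cZ_p$, and by Bertini applied to the irreducible $\cZ_p$ it is reduced and irreducible. The classical adjunction formula on $\P^2$ (or equivalently the Pl\"ucker genus formula) gives $p_a(H) = \binom{d-1}{2}$. Combining with $g(\cZ_p)=1$ yields
\[
\delta(\cZ_p) \;=\; \tbinom{d-1}{2} - 1 \;=\; \tfrac{1}{2}\,d(d-3).
\]

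For $\delta(\cX) = 8$, the key observation is that a general hyperplane section $H$ of $\cX \subset \S^3 \subset \P^4$ lies on the quadric surface $Q := \S^3 \cap H_{\P^4}$, which is smooth by Bertini (since $\S^3$ itself is smooth) and hence isomorphic to $\P^1 \times \P^1$. Any line $\ell$ in either ruling of $Q$ is in particular a line in the smooth quadric $3$-fold $\S^3$, so intersection theory on $\S^3$ gives $|\ell \cap \cX| = [\cX] \cdot [\ell] = 4$; indeed, writing $h$ for the hyperplane class on $\S^3$ and using $h^3 = \deg\S^3 = 2$, we read off $[\cX] = 4\,h$ from $\deg\cX = 8 = [\cX]\cdot h^2$, while $[\ell] \cdot h = 1$. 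Since $\ell \subset H_{\P^4}$, this count equals $|\ell \cap H|$, so $H$ meets each ruling line of $Q$ in $4$ points and therefore has bidegree $(4,4)$ on $Q$. Adjunction on $\P^1\times\P^1$ now yields $p_a(H) = (4-1)(4-1) = 9$, whence $\delta(\cX) = 9 - 1 = 8$. The main obstacle is this bidegree computation, which requires the intersection-theoretic input $|\ell\cap\cX| = 4$ and goes beyond the naive B\'ezout bound one would obtain from the ambient $\P^4$.
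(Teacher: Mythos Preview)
Your argument is correct. The paper itself does not give a proof but merely cites \cite[Lemmas~15 and~16]{2024great}, so your write-up is in fact more self-contained than the original. Your computations of $g=1$ via adjunction on the smooth model and of $\delta(\cZ_p)$ via the plane-curve genus formula are standard and correct. The computation of $\delta(\cX)=8$ is the interesting part: your approach of embedding a general hyperplane section $H$ in the smooth quadric surface $Q=\S^3\cap H_{\P^4}$ and reading off the bidegree~$(4,4)$ from the intersection theory of the smooth quadric $3$-fold~$\S^3$ is clean and direct. One minor point worth making explicit: for a generic hyperplane $H_{\P^4}$, none of the lines on $Q$ are contained in~$\cX$ (since $\cX$ contains at most a $1$-parameter family of lines in~$\S^3$, while the choice of $H_{\P^4}$ is $4$-dimensional), so the intersection number $[\cX]\cdot[\ell]=4$ really counts points and gives the bidegree as claimed.
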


\begin{proof}
See \citep[Lemmas~15 and~16]{2024great}
and recall from \Cref{prp:BGE8} and \Cref{lem:d} that both $\cX$ and $\cZ_p$ are dP surfaces.
\end{proof}

\newpage
\begin{definition}
\label{def:delta}
Let $X\subset\P^n$ be a complex surface
\st the 1-dimensional part of the singular locus of~$X$
admits the following decomposition
into irreducible complex curve components:
$
C_1\cup\cdots\cup C_r.
$
A \df{sectional delta invariant} for $X$ is a function
\[
\Delta_X\c \set{C_i}{1\leq i\leq r} \to \Z_{>0}
\]
that satisfies the following
axioms for all $1\leq i\leq r$, real structures $\sigma\c X\to X$,
and complex projective automorphisms~$\alpha\in \aut\P^n$:
\begin{enumerate}[topsep=0pt,itemsep=0pt,leftmargin=9mm,label=A\arabic*.,ref=A\arabic*]
\item\label{a1}
$\Delta_X(C_1)+\cdots+\Delta_X(C_r)=\delta(X)$.

\item\label{a2}
$\Delta_X( C_i)\geq \deg C_i$.

\item\label{a3}
$\Delta_X(C_i)=\Delta_X(\sigma(C_i))$ and $\Delta_X(C_i)=\Delta_{\alpha(X)}(\alpha(C_i))$.

\item\label{a4}
If $\rho\c X\dto Z\subset\P^m$ is a complex birational linear map
\st $g(X)=g(Z)$ and
$\rho|_{C_i}\c C_i\dto \rho(C_i)$ is birational, then
\[
\frac{\Delta_Z(\rho(C_i))}{\deg\rho(C_i)}=\frac{\Delta_X(C_i)}{\deg C_i}.
\]
\end{enumerate}
We write $\Delta(C)$ instead of~$\Delta_X(C)$
if it is clear from the context that $C\subset X$.
\END
\end{definition}

\begin{proposition}
\label{prp:delta}
If $X\subset \P^n$ is a complex surface, then
there exists a sectional delta invariant~$\Delta_X$.
\end{proposition}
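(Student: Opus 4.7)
My plan is to set
\[
\Delta_X(C_i) := \deg(C_i)\cdot\mu_i,
\]
where $\mu_i$ denotes the delta invariant of the transversal curve singularity of $X$ along $C_i$. Concretely, at a generic smooth point $q\in C_i$ that lies on no other component of $\Sing X$, the analytic germ $(X,q)$ is isomorphic to the product of the smooth germ $(C_i,q)$ with a curve germ $(S,0)$, and I set $\mu_i:=\delta_0(S)$. Constructibility of the analytic singularity type along $C_i$ guarantees that $\mu_i$ is independent of the choice of generic $q$.

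For Axiom~\ref{a1} I would invoke Bertini: a general hyperplane $H\subset\P^n$ avoids the finitely many isolated singular points of $X$ and meets each $C_i$ transversally in exactly $\deg(C_i)$ smooth points of $C_i$, each disjoint from every other $C_j$. The product-local-structure implies that at each such intersection point the curve $H\cap X$ has a singularity analytically isomorphic to $(S,0)$, of delta invariant $\mu_i$. Summing the local contributions using the standard identity $\delta(Y)=\sum_{p\in\Sing Y}\delta_p(Y)$ for a reduced curve $Y$ gives
\[
\delta(X)=\delta(H\cap X)=\sum_i \deg(C_i)\cdot\mu_i,
\]
establishing Axiom~\ref{a1}. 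Axiom~\ref{a2} is immediate since $C_i\subseteq\Sing X$ forces $(S,0)$ to be singular, so $\mu_i\geq 1$. Axiom~\ref{a3} holds because real structures and projective automorphisms of $\P^n$ are biholomorphisms that preserve degrees and analytic singularity types along $C_i$. For Axiom~\ref{a4}, a linear map $\rho\c X\dto Z$ that is birational and remains birational on $C_i$ is an analytic isomorphism on a Zariski open subset of $X$ meeting $C_i$ in a dense open subset; hence the transversal curve germ of $X$ at a generic point of $C_i$ is analytically isomorphic to that of $Z$ at a generic point of $\rho(C_i)$, so $\mu_{C_i}=\mu_{\rho(C_i)}$, and dividing by the respective degrees yields the required equality.

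The main obstacle is Axiom~\ref{a1}: one has to check carefully, via Bertini, that a general hyperplane section of $X$ is a reduced curve whose singular points are precisely the transversal intersections with the curves $C_i$, each carrying the expected analytic type. This relies on $\dim\Sing X\leq 1$, so that general hyperplanes avoid isolated singularities, and on the fact that each $C_i$ is smooth at its generic points. Once the product-local-structure is secured, the remaining three axioms follow essentially formally from the analytic invariance of $\mu_i$ under biholomorphisms.
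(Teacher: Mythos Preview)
Your construction $\Delta_X(C_i)=\deg(C_i)\cdot\mu_i$ is the natural one, and your treatment of \ref{a1}--\ref{a3} is sound. (A minor point: the product decomposition $(X,q)\cong(C_i,q)\times(S,0)$ is stronger than what you need and not obviously true in full generality; it suffices that the delta invariant of a transverse slice is constant along a Zariski open subset of $C_i$, which follows from constructibility as you say.) The paper itself gives no argument here and simply cites \cite[Proposition~10]{2024great}, so there is no alternative strategy to compare against.

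There is, however, a genuine gap in your verification of \ref{a4}. You assert that a birational linear map $\rho$ with $\rho|_{C_i}$ birational is an analytic isomorphism on a Zariski open set meeting $C_i$; but this does not follow. Nothing prevents a \emph{different} curve $D\subset X$ (possibly lying in the smooth locus of $X$) from satisfying $\rho(D)=\rho(C_i)$. In that case the fibre $\rho^{-1}(\rho(q))$ over a generic $q\in C_i$ contains points of $D$ as well, so $\rho$ is not a local isomorphism near $q$, and the transverse germ of $Z$ along $\rho(C_i)$ acquires extra branches. Then $\mu_{\rho(C_i)}>\mu_i$ and your $\Delta$ would violate \ref{a4}. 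Note also that you never invoke the hypothesis $g(X)=g(Z)$, which is part of the axiom; this is a signal that something is missing. To close the gap you must either rule out such incidental coincidences $\rho(D)=\rho(C_i)$ under the stated hypotheses, or argue more carefully that the extra contributions are accounted for elsewhere.
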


\begin{proof}
\citep[Proposition~10]{2024great}.
\end{proof}

\begin{proposition}
\label{prp:P}
If $\cX$ contains a Cliffordian quartet $(L,R,\pp,\qq)$,
then
\[
\PseudoType\cX\in\{\Piii,\,\Piv,\,\Pv\}.
\]
\end{proposition}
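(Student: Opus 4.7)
The plan is to identify the pseudo singular type of $\cX$ by classifying the singular components that lie outside the hyperplane $\E$. Let $V$ denote the Zariski closure of $\Sing\cX\setminus\E$, so that by construction its irreducible components are precisely those singular components not among the quartet lines $L,\oL,R,\oR$. I would first argue that $V$ is a non-empty possibly reducible curve with $V\cap\E\subseteq\{\pp,\op,\qq,\oq\}$: any additional singular point of $\cX$ inside $\E$ would make the hyperplane section $\cX\cap\E$ more singular than a transverse union of the four double quartet lines allows under B\'ezout's theorem, and emptiness of $V$ would leave the delta count in the next step unbalanced.

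Next, I would bound $\deg V$ using the sectional delta invariant $\Delta=\Delta_\cX$ from \Cref{prp:delta} together with $\delta(\cX)=8$ from \Cref{lem:delta}. Each of the four quartet lines contributes at least $1$ to $\Delta$ by axiom \ref{a2}, and each contributes strictly more because the multiplicity of $\cX$ at a quartet incidence point is $1+\theta_\pp(\cX)\geq 3$ by \Cref{prp:pr}. Combined with axiom \ref{a1}, this leaves at most delta $4$ for $V$, hence $\deg V\leq 4$ again by \ref{a2}. A refinement via the projection $\pr_\pp$, whose image $\cZ_\pp$ is a quintic complex dP surface whose singular components are tightly constrained by the classes in $B(\cZ_\pp)$ determined in \Cref{lem:d5}, then excludes $\deg V=3$ and reduces matters to $\deg V\in\{2,4\}$.

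The proof then splits into cases on the decomposition of $V$ into irreducible components. If $V$ is irreducible of degree four, then the projection $\pr_\pp|_V$ drops the degree by one because $V$ passes through $\pp$, so $\pr_\pp(V)$ is a cubic, which matches the defining equality $\deg\pr_\pp(V)+1=\deg V=4$ of $\Piii$. If $V$ decomposes into two irreducible components of degree $2$, both are necessarily irreducible conics, and \Cref{lem:A4} applies directly to give $\PseudoType\cX=\Piv$. If $V$ is a single irreducible conic, then being planar it meets $\E$ in at most two points; by the constraint $V\cap\E\subseteq\{\pp,\op,\qq,\oq\}$ and compatibility with $\sigma_\cX$, these two points must form a complex conjugate pair, so up to symmetry $V$ is incident with exactly one of the pairs $\{R,\oR\}$ or $\{L,\oL\}$, matching the diagram of $\Pv$.

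The hard part will be the refined degree bound, in particular excluding $\deg V=3$ and tracking the delta absorbed at the non-transversal crossing points $\pp,\op,\qq,\oq$, where several double components meet and the local analytic structure is not visible from intersection numbers alone. Axiom \ref{a4} must be invoked with care, typically through an auxiliary projection that is birational on one chosen component so that its delta contribution transfers to a simpler dP surface whose total delta is known by \Cref{lem:delta}, and combined with the fact from \Cref{prp:BGE8} that $\bA(\cX)\cong\P^1\times\P^1$ so that every singular component of $\cX$ has class $\alpha\,\l_0+\beta\,\l_1$ on the desingularization, ruling out the arithmetic possibility of a degree-three singular curve.
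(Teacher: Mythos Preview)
Your overall strategy (use the sectional delta invariant together with a projection) matches the paper, but several of your intermediate claims are wrong or unjustified, and the key mechanism of the paper is absent.

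First, the inclusion $V\cap\E\subseteq\{\pp,\op,\qq,\oq\}$ is false in general: in the $\Pv$ case the double conic~$C$ meets $R$ and $\oR$ at interior points of those lines, not at the quartet vertices, so you cannot use this as a standing hypothesis. Second, your argument that each quartet line has $\Delta>1$ ``because the multiplicity of~$\cX$ at a quartet incidence point is $\geq 3$'' is not supported by the axioms \ref{a1}--\ref{a4}: the sectional delta invariant measures the delta invariant of a general hyperplane section at a general point of the component, and none of the axioms links it to the multiplicity at special points. The paper in fact never argues $\Delta(L)>1$ directly.

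What the paper actually does is compare two delta totals. Projecting from~$\pp$ contracts $L,R$ but is birational on $\oL,\oR$ and on each $V_i$; by \ref{a3} and \ref{a4} one has $\Delta(L)=\Delta(\oL)=\Delta(\pr_\pp(\oL))$ and likewise for~$R$. Setting $\Psi:=\Delta(L)+\Delta(R)$ and using $\delta(\cX)=8$, $\delta(\cZ_\pp)=5$ with \ref{a1} and \Cref{prp:pr} (which gives $\Sing\cZ_\pp=\pr_\pp(\Sing\cX)$) yields the system $\sum_i\Delta(V_i)=8-2\Psi$ and $\sum_i\Delta(W_i)=5-\Psi$. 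This forces $2\leq\Psi\leq 3$ and $(r,\Psi)\in\{(1,2),(2,2),(3,2),(1,3),(2,3)\}$, and the five cases are then dispatched individually using \ref{a2}, \ref{a4}, Claim~1 (odd-degree components are non-real), quartet symmetry, and \Cref{lem:A4}. Your case split omits the configurations with degree-$1$ components (the cases $(3,2)$ and $(2,3)$), which require separate contradiction arguments; and your exclusion of $\deg V=3$ via ``classes in $B(\cZ_\pp)$'' is not how the paper proceeds---the exclusion comes from parity (Claim~1) combined with the delta equations.
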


\begin{proof}
We consider the complex stereographic projection~$\pr_\pp\c\S^3\dto\P^3$
and observe that the complex double lines $L, R\subset\Sing\cX$ are contracted to
complex points in $\cZ_\pp$.
We deduce from \Cref{prp:pr} that $\bigl(\deg\cZ_\pp,\theta_\pp(\cX)\bigr)=(5,2)$.
By \Cref{prp:delta} there exist sectional delta invariants for $\cX$ and $\cZ_\pp$.
Recall from \Cref{lem:delta} that $g(\cX)=g(\cZ_\pp)$, $\delta(\cX)=8$ and $\delta(\cZ_\pp)=5$.
It follows from \AXMS{a3}{a4} that
\begin{equation}
\label{eqn:LR}
\Delta(L)=\Delta(\oL)=\Delta(\pr_\pp(\oL))
\qquad\text{and}\qquad
\Delta(R)=\Delta(\oR)=\Delta(\pr_\pp(\oR)).
\end{equation}

Suppose by contradiction that $\Sing \cX\subset \E$
so that $\Sing \cX=L\cup \oL\cup R\cup \oR$.
It follows from \Cref{prp:pr} that
$\Sing \cZ_\pp=\pr_\pp(\Sing \cX)=\pr_\pp(\oL)\cup\pr_\pp(\oR)$.
We apply \EQN{LR} with \AXM{a1} and find that
$2\,\Delta(L)+2\,\Delta(R)=\delta(\cX)=8$ and $\Delta(L)+\Delta(R)=\delta(\cZ_\pp)=5$.
We arrived at a contradiction and thus we established that $\Sing \cX\nsubseteq\E$.

Recall from \Cref{prp:BGE8} that all complex irreducible components of $\Sing \cX$ are complex curves.
Let $V:=V_1\cup\cdots\cup V_r$
denote the decomposition of $\Sing \cX\setminus\E$ into
complex irreducible curve components so that
\[
\Sing \cX=V_1\cup\cdots\cup V_r\cup L\cup \oL\cup R\cup \oR.
\]
We set $W_i:=\pr_\pp(V_i)$ for $1\leq i\leq r$.
Since $\theta_\pp(\cX)=2$ we find that $W_i$ is not a complex point.
Thus the decomposition of the 1-dimensional part of $\Sing \cZ_\pp=\pr_\pp(\Sing \cX)$
into complex irreducible components is as follows:
\[
\Sing \cZ_\pp=W_1\cup\cdots\cup W_r\cup\pr_\pp(\oL)\cup\pr_\pp(\oR).
\]
We set $\Psi:=\Delta(L)+\Delta(R)$ and apply
\EQN{LR} and \AXM{a1} so that
\begin{equation}
\label{eqn:VW}
\Delta(V_1)+\cdots+\Delta(V_r)=8-2\,\Psi
\quad\text{and}\quad
\Delta(W_1)+\cdots+\Delta(W_r)=5-\Psi.
\end{equation}
In particular, we find that $2\leq \Psi\leq 3$
and thus
\begin{equation}
\label{eqn:r}
(r,\Psi)\in\{(1,2),~(2,2),~(3,2),~(1,3),~(2,3)\}.
\end{equation}
Notice that the choice of the complex incidence point~$\pp$
in the Clifford quartet
is not canonical,
and that the assertions leading to \EQN{LR}, \EQN{VW} and \EQN{r},
remain true
if we replace the five symbols in
$(\pp,L,\oL,R,\oR)$ with any one of following three options:
$(\op,\oL,L,\oR,R)$,
$(\qq,L,\oL,\oR,R)$ or
$(\oq,\oL,L,R,\oR)$.
We shall refer to this observation as \df{quartet symmetry}.

If $\sigma_\cX(V_i)=V_i$ for some $1\leq i\leq r$, then $V_i\subset \S^3$
intersects the hyperplane section~$\E$ of~$\S^3$ in
complex conjugate points. The following claim is therefore
a consequence of B\'ezout's theorem.

{\bf Claim~1.}
If $\deg V_i$ is odd for some $1\leq i\leq r$, then $\sigma_\cX(V_i)\neq V_i$.

We make a case distinction on $(r,\Psi)$ as listed at \EQN{r}.

Suppose that $(r,\Psi)=(1,2)$ so that by \EQN{VW} we have $\Delta(V_1)=4$ and $\Delta(W_1)=3$.
\AXMS{a2}{a4} imply that $\deg V=\deg V_1=4$ and $\deg W_1=3$.
From this we deduce that $\pp\in V$ and thus
$\op,\oq,\qq\in V$ as well by quartet symmetry.
We conclude that~$\PseudoType \cX=\Piii$.

Suppose that $(r,\Psi)=(2,2)$ so that by \EQN{VW} we have
$\Delta(V_1)+\Delta(V_2)=4$ and $\Delta(W_1)+\Delta(W_2)=3$.
Suppose by contradiction that $\deg V_1=1$ or $\deg V_2=1$.
In this case $\sigma_\cX(V_1)=V_2$ by Claim~1 and thus
$\Delta(V_1)=\Delta(V_2)=2$ by \AXM{a3}.
We arrived at a contradiction, since $\Delta(W_1)=\Delta(W_2)=2$ by \AXM{a4}.
Therefore, $2\leq\deg V_i\leq\Delta(V_i)$ for $i\in\{1,2\}$ by \AXM{a2} so that $\deg V_1=\deg V_2=2$.
We deduce from \AXM{a4} that $\deg W_1=\Delta(W_1)=1$ and $\deg W_2=\Delta(W_2)=2$.
Hence, $\pp\in V$ and thus
$\op,\oq,\qq\in V$ as well by quartet symmetry
so that $(V_1\cup V_2)\cap\E=\{\pp,\op,\qq,\oq\}$ by B\'ezout's theorem.
It now follows from \Cref{lem:A4} that $\PseudoType \cX=\Piv$.

Suppose by contradiction that $(r,\Psi)=(3,2)$
so that by \EQN{VW} we have $\Delta(V_1)+\Delta(V_2)+\Delta(V_3)=4$
and $\Delta(W_1)+\Delta(W_2)+\Delta(W_3)=3$.
By applying Claim~1 and \AXM{a2}, we find that
$\sigma_\cX(V_1)=V_2$, $\sigma_\cX(V_3)=V_3$ and $(\deg V_1,\deg V_2,\deg V_3)=(1,1,2)$.
It follows from \AXM{a2} that $\Delta(W_i)=\deg W_i=1$ for all $1\leq i\leq 3$.
Hence, $\deg W_3=\deg V_3-1$, $\deg V_1=\deg W_1$ and $\deg V_2=\deg W_2$.
This implies that $\pp,\op\in V_3\setminus(V_1\cup V_2)$.
We deduce from the quartet symmetry that $\qq,\oq\in V_1\setminus (V_2\cup V_3)$.
We arrived at a contradiction with \Cref{prp:pr}, since $\theta_\qq(\cX)=3$.

Suppose that $(r,\Psi)=(1,3)$
so that by \EQN{VW} we have $\Delta(V_1)=2$
and $\Delta(W_1)=2$.
It follows from Claim~1 and \AXMS{a2}{a4} that $\deg V_1=\deg W_1=2$,
which implies that~$\pp\notin V_1$.
By quartet symmetry, we find that $V_1\cap\{\pp,\op,\qq,\oq\}=\varnothing$.
The conic~$V_1$ intersects the hyperplane section $\E\subset\S^3$
in two complex conjugate points.
Therefore, we may assume \Wlog that $V\cap L=V\cap \oL=\varnothing$
and $|V\cap R|=|V\cap \oR|=1$ so that $\PseudoType \cX=\Pv$.

Suppose by contradiction that $(r,\Psi)=(2,3)$
so that by \EQN{VW} we have $\Delta(V_1)+\Delta(V_2)=2$ and $\Delta(W_1)+\Delta(W_2)=2$.
It follows from \AXM{a2} that $\deg C=1$ for all $C\in\{V_1,V_2,W_1,W_2\}$,
which implies that $\pp\notin V$.
Therefore, $\op,\qq,\oq\notin V$ by quartet symmetry.
We have $|V_1\cap\E|=1$ by B\'ezout's theorem.
Hence, may assume \Wlog that $V_1\cap L=V_1\cap R=\varnothing$
so that $[V_1]\cdot [L]=[V_1]\cdot [R]=0$ by \RL{C}{d}, where $[V_1]=\alpha\,\l_0+\beta\,\l_1$
for some $\alpha,\beta\geq 0$.
We arrived at a contradiction as $[L]=\l_0$ and $[R]=\l_1$ by assumption
so that either $[V_1]\cdot [L]>0$ or $[V_1]\cdot [R]>0$.

We considered all possible values for $(r,\Psi)$ and thus concluded the proof.
\end{proof}

\begin{proposition}
\label{prp:D1D2}
If $\cX$ contains a Bohemian quartet $(L,R,\vv)$, then
\[
\SingType\cX\in\{\Di,\,\Dii\}.
\]
\end{proposition}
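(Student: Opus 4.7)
The plan is to adapt the proof of Proposition~\ref{prp:P} to the Bohemian setting, with $\vv$ playing the role of the Cliffordian base point $\pp$. Since all four quartet lines $L, \oL, R, \oR$ pass through $\vv$, the stereographic projection $\pr_\vv$ contracts each of them, and $\theta_\vv(\cX) \geq 4$ together with Proposition~\ref{prp:pr} yields $(\deg\cZ_\vv, \theta_\vv(\cX)) = (4,4)$ and $\vv \in \cX_\R$. By Lemma~\ref{lem:delta} this gives $g(\cX) = g(\cZ_\vv) = 1$, $\delta(\cX) = 8$ and $\delta(\cZ_\vv) = 2$. Let $V_1, \ldots, V_r$ denote the irreducible components of $\Sing\cX$ distinct from $L, \oL, R, \oR$. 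By the Bohemian quartet hypothesis $\cX \cap \U_\vv = 2L + 2\oL + 2R + 2\oR$, so no $V_i$ lies in $\U_\vv$, and the images $W_i := \pr_\vv(V_i)$ form the $1$-dimensional part of $\Sing\cZ_\vv$.

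Applying Proposition~\ref{prp:delta} to obtain sectional delta invariants, axiom~\ref{a3} gives $\Delta(L) = \Delta(\oL)$ and $\Delta(R) = \Delta(\oR)$, and axiom~\ref{a4} applied to $\pr_\vv|_{V_i}$ yields $\Delta(W_i)/\deg W_i = \Delta(V_i)/\deg V_i$ with $\Delta(V_i) \geq \Delta(W_i)$ (equality iff $\vv \notin V_i$). Setting $\Psi := \Delta(L) + \Delta(R)$, axiom~\ref{a1} applied to $\cX$ and $\cZ_\vv$ gives
\[
\sum\nolimits_i \Delta(V_i) \;=\; 8 - 2\Psi
\quad\text{and}\quad
\sum\nolimits_i \Delta(W_i) \;=\; 2,
\]
and combined with axiom~\ref{a2} this forces $\Psi \in \{2,3\}$ and $r \leq 2$, leaving the candidates $(r,\Psi) \in \{(1,2),(2,2),(1,3),(2,3)\}$. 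In case $(1,2)$ with the admissible ratio $\deg V_1 = 2\deg W_1$ and the subcase $(\deg V_1, \deg W_1) = (4,2)$, the curve $V_1$ is a quartic rational curve with a double point at $\vv$, i.e.\ a Viviani curve, yielding $\SingType\cX = \Di$. In case $(2,2)$ with $\deg V_1 = \deg V_2 = 2$, a class analysis using \RL{C}{d} together with the symmetry $\sigma_*(\l_0)=\l_0$, $\sigma_*(\l_1)=\l_1$ forces both $V_i$ to be real circles of class $\l_0 + \l_1$ through $\vv$, producing $\SingType\cX = \Dii$.

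The main technical obstacle is eliminating the remaining subcases: (a) $(1,2)$ with $\deg V_1 = 2$; (b) $(2,2)$ with $(\deg V_1,\deg V_2) = (1,3)$; (c) $(1,3)$; and (d) $(2,3)$. I would argue via real structure together with B\'ezout. By Proposition~\ref{prp:BGE8} we have $E(\cX) = \varnothing$, and the quartet conditions $[L]=[\oL]=\l_0$, $[R]=[\oR]=\l_1$ force $\sigma_*(\l_0) = \l_0$ and $\sigma_*(\l_1) = \l_1$. Any residual singular line component $V$ therefore has class $\l_0$ or $\l_1$ and must be $\sigma_\cX$-invariant, hence real. A real line $V$ of class $\l_0$ not through $\vv$ satisfies $V \cdot R = 1$ by \RL{C}{d}, producing a real singular point $p \in V \cap R$ with $p \neq \vv$; then $\sigma_\cX(p)=p$ and $\sigma_\cX(R) = \oR$ force $p \in R \cap \oR = \{\vv\}$, a contradiction. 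This disposes of cases~(b), (d) and the line subcase of~(c). The conic subcases are handled by a similar B\'ezout argument: for a real conic $V$ of class $\alpha\l_0+\beta\l_1$ with $\alpha+\beta = 2$, the class-intersection $V \cdot (L+\oL+R+\oR) = 2(\alpha+\beta) = 4$ must concentrate in the length-$2$ scheme $V \cap \U_\vv$, and the real-structure identities $\sigma_\cX(V \cap L) = V \cap \oL$, $\sigma_\cX(V \cap R) = V \cap \oR$ turn out to be incompatible with this unless $V$ falls into the D1 or D2 configuration. After these eliminations, only $\SingType\cX \in \{\Di, \Dii\}$ survives.
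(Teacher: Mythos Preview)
Your approach---adapting the delta-invariant bookkeeping of Proposition~\ref{prp:P} to the Bohemian setting---is genuinely different from the paper's proof, which instead analyses the quartic dP surface~$\cZ_\vv$ via \Cref{lem:dp4}, pins down $B(\cZ_\vv)$, $G(\cZ_\vv)$, $E(\cZ_\vv)$ explicitly, locates the one-dimensional part $U\subset\Sing\cZ_\vv$ through the geometry of the plane at infinity~$\cH_\vv$, and then observes that $U$ is a (possibly reducible) double conic and hence a hyperplane section of~$\cZ_\vv$, so that its preimage $V\subset\cX$ is a hyperplane section with $[V]=2\l_0+2\l_1$. This last step simultaneously delivers the class and the Viviani/two-circle dichotomy, and the remaining items of \Cref{def:st} follow almost mechanically.

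Your argument has a concrete error in the elimination step. You claim that a real singular line $V$ of class~$\l_0$ meets $R$ in a \emph{real} point~$p$, and then use $\sigma_\cX(p)=p$ with $\sigma_\cX(R)=\oR$ to force $p\in R\cap\oR=\{\vv\}$. But $R$ is not real, so there is no reason for $p\in V\cap R$ to be real; in fact $\sigma_\cX(p)\in V\cap\oR$ is a different point in general. The intended contradiction can be salvaged by a B\'ezout argument on $V\cap\U_\vv$: since $V$ is a line not in~$\U_\vv$, the intersection $V\cap\U_\vv$ is a single point, yet $V\cap R$ and $V\cap\oR$ are both nonempty by \RL{C}{d} and contained in~$\U_\vv$, forcing that single point into $R\cap\oR=\{\vv\}$, contradicting $\vv\notin V$. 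A similar B\'ezout count works for the conic cases with $[V]=\l_0+\l_1$, but your treatment of $[V]=2\l_0$ in case~(c) is too vague to be a proof, and it is not clear your framework excludes it without further input.

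Beyond the elimination, two structural points are underspecified. First, you assert that the $W_i$ exhaust the one-dimensional part of $\Sing\cZ_\vv$; this requires ruling out singular curves appearing in~$\cH_\vv$, which the paper does by showing $\cZ_\vv\cap\cH_\vv$ consists of four lines with classes in~$E(\cZ_\vv)$. Second, establishing $\SingType\cX=\Di$ or $\Dii$ means verifying all of \ref{i}--\ref{v} in \Cref{def:st}; in particular the class $[V]=2\l_0+2\l_1$ (and $[V_i]=\l_0+\l_1$ in the $\Dii$ case) must be checked, which you do not do for your surviving configurations.
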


\begin{proof}
Recall from \Cref{lem:d} that $\cZ_\vv$ is a dP surface.
It follows from the assumption that $\theta_\vv(\cX)\geq 4$
and thus $(\deg \cZ_\vv, \theta_\vv(\cX))=(4,4)$ by \Cref{prp:pr}.

Suppose that $M$, $M'$, $F$, $F'$ are as in \Cref{lem:dpencil}
and let
$\aa:=\pr_\vv(L)$,
$\oa:=\pr_\vv(\oL)$ and
$\bb:=\pr_\vv(R)$,
$\ob:=\pr_\vv(\oR)$
be pairs of complex conjugate points.
It follows from \RL{C}{d} that $M\cap R\neq\varnothing$ and $M\cap\oR\neq\varnothing$.
Hence, $\aa$ and $\oa$ are base points of the pencil~$F$.
We apply \RLS{C}{a}{C}{b}, and deduce that there exists
a component $W_\aa\subset B(\cZ_p)$ \st
\[
\Gamma(W_\aa)=\aa,\qquad
\Lambda(F)\cdot W_\aa\succ 0
\quad\text{and}\quad
\sigma_*(W_\aa)\neq W_\aa.
\]
Similarly, there exists
a component $W_\bb\subset B(\cZ_p)$ \st
\[
\Gamma(W_\bb)=\bb,\qquad
\Lambda(F')\cdot W_\bb\succ 0
\quad\text{and}\quad
\sigma_*(W_\bb)\neq W_\bb.
\]
The hypothesis of \Cref{lem:dp4} is satisfied so that
up to $\aut N(\cZ_\vv)$, we have
\begin{gather*}
\sigma_*(\l_0)=\l_0,\quad
\sigma_*(\l_1)=\l_1,\quad
\sigma_*(\p_1)=\p_2,\quad
\sigma_*(\p_3)=\p_4,\quad
\\
B(\cZ_\vv)=\{b_{13},b_{24},\bp_{14},\bp_{23}\},\quad
G(\cZ_\vv)=\{g_0,g_1,g_{12},g_{34}\},\quad
E(\cZ_\vv)=\{e_1,e_2,e_3,e_4\}.
\end{gather*}
See \cite{2024celest} for a
\href{https://github.com/niels-lubbes/celestial-surfaces#intersection-numbers-for-proposition-33}{table of intersection numbers}
between these classes.

Let $U$ denote the one-dimensional part of~$\Sing \cZ_\vv$.
Since $\delta(\cZ_\vv)=2$ by \Cref{lem:delta} it follows
from \AXMS{a1}{a2} at \Cref{def:delta} that $\deg U\leq 2$.

We observe that the two pairs of base points $\aa$, $\oa$ and $\bb$, $\ob$ lie on the absolute conic~$\cU_\vv$.
By \RL{C}{c} the surface $\cZ_p$ contains two pairs of complex
conjugate lines with classes $e_1$, $e_2$ and $e_3$, $e_4$.
By \RL{C}{d} these four complex lines intersect the base points and thus
lie in the hyperplane at infinity~$\cH_\vv$.
Hence, the two pairs of complex conjugate lines with classes $e_1$, $e_2$ and $e_3$, $e_4$
intersect additionally at points~$\rs$ and~$\rt$, \resp.
As $e_1\cdot\sigma_*(e_1)=e_3\cdot\sigma_*(e_3)=0$,
we deduce from \RL{C}{d} that
$\rs, \rt\in\Sing (\cZ_\vv)_\R$.
Notice that the stereographic projection~$\pr_\vv$ is an isomorphism outside~$\U_\vv$ and
the hyperplane at infinity~$\cH_\vv$.
Since $\cX$ does not contain isolated singularities by \Cref{prp:BGE8},
it follows from \RL{C}{a} that $\aa$, $\oa$, $\bb$ and $\ob$
are the only complex isolated singularities in~$\cZ_\vv$.
This implies that $\{\rs,\rt\}=U\cap \cH_\vv$ and thus $\deg U=2$.
Let $V$ denote the Zariski closure of~$\Sing\cX\setminus\{L,\oL,R,\oR\}$
so that $\pr_\vv(V)=U$. It follows from \Cref{prp:pr} that a $V$ is a double curve
and thus its stereographic projection~$U$ is a double curve as well.

The intersections between the
base points, complex lines and the components of~$U$
are illustrated by diagram $\Ei$ or $\Eii$ in \Cref{tab:E}, if $U$ is an irreducible and reducible double conic, \resp.
\begin{table}[!ht]
\caption{The incidences between the base points, complex lines and singular components of the quartic dP surface~$\cZ_\vv\subset\P^3$.
The 1-dimensional part $U\subset\Sing \cZ_\vv$ is either an irreducible conic ($\Ei$) or a reducible conic ($\Eii$).
The two pairs of complex conjugate lines are represented by dashed line segments and lie in the hyperplane at infinity~$\cH_\vv$.}
\label{tab:E}
\centering
\setlength{\tabcolsep}{7mm}
\begin{tabular}{cc}
{\bf E1} & {\bf E2}
\\
\begin{tikzpicture}[scale=0.6]
\draw[thick, colBrown] (-2,-2) to [out=45, in=220] (2,2) to [out=0, in=-90] (-2,-2);
\draw[red,densely dotted] (-3,2)    -- (4,2) node[black,right] {$e_1$};
\draw[red,densely dotted] (-3,-0.5) -- (4,3) node[black,right] {$e_2$};
\draw[red,densely dotted] (-2,3)    -- (-2,-4) node[black,below] {$e_3$};
\draw[red,densely dotted] ( 0.5,3)  -- (-3,-4) node[black,below] {$e_4$};
\draw[draw=black, fill=colP] (-2,2)     circle [radius=0.15] node[black,above left] {$b_{13}$};
\draw[draw=black, fill=colP] (-2,0)     circle [radius=0.15] node[black,above left] {$\bp_{23}$};
\draw[draw=black, fill=colP] (0,2)      circle [radius=0.15] node[black,above left] {$\bp_{14}$};
\draw[draw=black, fill=colP] (-0.7,0.7) circle [radius=0.15] node[black,above left] {$b_{24}$};
\draw[draw=black, fill=colO] (-2,-2)    circle [radius=0.15] node[black,left] {$\rt$};
\draw[draw=black, fill=colO] (2,2)      circle [radius=0.15] node[black,above] {$\rs$};
\end{tikzpicture}
&
\begin{tikzpicture}[scale=0.6]
\draw[thick, colGray] (-3,-2) -- (2.7,-2) node[right] {$g_{34}$};
\draw[thick, colGreen] (2,3) -- (2,-2.7) node[below] {$g_{12}$};
\draw[red, densely dotted] (-3,2)    -- (4,2) node[black,right] {$e_1$};
\draw[red, densely dotted] (-3,-0.5) -- (4,3) node[black,right] {$e_2$};
\draw[red, densely dotted] (-2,3)    -- (-2,-4) node[black,below] {$e_3$};
\draw[red, densely dotted] ( 0.5,3)  -- (-3,-4) node[black,below] {$e_4$};
\draw[draw=black, fill=colP] (-2,2)     circle [radius=0.15] node[black, above left] {$b_{13}$};
\draw[draw=black, fill=colP] (-2,0)     circle [radius=0.15] node[black, above left] {$\bp_{23}$};
\draw[draw=black, fill=colP] (0,2)      circle [radius=0.15] node[black, above left] {$\bp_{14}$};
\draw[draw=black, fill=colP] (-0.7,0.7) circle [radius=0.15] node[black, above left] {$b_{24}$};
\draw[draw=black, fill=colO] (-2,-2)    circle [radius=0.15] node[black, above left] {$\rt$};
\draw[draw=black, fill=colO] (2,2)      circle [radius=0.15] node[black, above left] {$\rs$};
\draw[draw=black, fill=colO] (2,-2)     circle [radius=0.15] node[black, above left] {};
\end{tikzpicture}
\end{tabular}
\end{table}

If $U$ is irreducible, then by B\'ezout's theorem this double conic must be a hyperplane
section of~$\cZ_\vv$ so that $[U]=-\k=2\,\l_0+2\,\l_1-\p_1-\p_2-\p_3-\p_4$.

Now suppose that $U$ is reducible and
let $U_1$ and $U_2$ denote its two complex double line components.
It follows from \RL{C}{c} and $|E(\cZ_\vv)|=4$ that the anticanonical projections of
the four complex lines in the anticanonical model~$\bA(\cZ_\vv)$
form the hyperplane section $\cZ_\vv\cap\cH_\vv$.
Hence, the complex double line~$U_i$ must for all $i\in\{1,2\}$ be the anticanonical projection of a complex irreducible conic
in~$\bA(\cZ_\vv)$ so that the class~$[U_i]$ belongs to~$G(\cZ_\vv)=\{g_0,g_1,g_{12},g_{34}\}$.
We observe that $\aa,\oa,\bb,\ob\notin U$
and we may assume \Wlog that $\rt\notin U_1$ and $\rs\notin U_2$.
It follows from \RL{C}{a} and \RL{C}{d} that $[U_1]\cdot b=[U_2]\cdot b=0$ for all
$b\in \{b_{13},b_{24},\bp_{14},\bp_{23}\}$,
$[U_1]\cdot e_3=[U_1]\cdot e_4=0$ and
$[U_2]\cdot e_1=[U_2]\cdot e_2=0$.
We find that $[U_1]=g_{12}$ and $[U_2]=g_{34}$.
Notice that $U_1\cap U_2\neq\varnothing$ by \RL{C}{d} and thus $U$ is a double conic.

Recall from \Cref{prp:BGE8} and \RL{C}{b} that $\cX$ is 2-circled
and thus \ref{i} at \Cref{def:st} holds.
By B\'ezout's theorem, the double conic~$U\subset \cZ_\vv$ is a hyperplane section,
and thus its preimage $V\subset\cX$ is a hyperplane section as well.
As $\cX$ is a dP surface, we find that $[V]=-\k_{\cX}=2\,\l_0+2\,\l_1$
and thus $V$ is a either a reducible or irreducible double curve of degree~$4$.
Recall from \Cref{lem:dpencil} that
$[M_i]=[F_i]=\l_0$ and $[M'_i]=[F'_i]=\l_1$ for general $i\in\P^1$.
By assumption, $[L]=[\oL]=\l_0$, $[R]=[\oR]=\l_1$
and thus the first three items of \ref{iv} are satisfied.

First, we suppose that $V$ is reducible.
Let $V=V_1\cup V_2$ \st $U_1=\pr_\vv(V_1)$ and $U_2=\pr_\vv(V_2)$.
We find that \ref{ii} is satisfied for the $\Dii$-diagram in \Cref{tab:s}
and \ref{iii} is a consequence of \Cref{prp:pr}.
By \RL{C}{d} and B\'ezout's theorem, we have for all $j\in\{1,2\}$ and general $i\in\P^1$
that $|F_i\cap U_j|=|F'_i\cap U_j|=1$, which implies that $|M_i\cap V_j|=|M'_i\cap V_j|=1$.
Hence, $[V_1]=[V_2]=\l_0+\l_1$ and thus both \ref{iv} and \ref{v} hold so that
$\SingType\cX=\Dii$.

Finally, suppose that $V$ is irreducible.
We find that \ref{ii} is satisfied for the $\Di$-diagram in \Cref{tab:s}
and \ref{iii} is a consequence of \Cref{prp:pr}.
We already established that $V\subset\cX$ is
a hyperplane section so that \ref{iv} holds.
Therefore, \ref{v} is a consequence of B\'ezout's theorem so that $\SingType \cX=\Di$.
\end{proof}

We are now ready to prove the main result of this article.

\begin{proof}[Proof of \Cref{thm:s}]
See \Cref{rmk:s} for Assertions~\ref{thm:s:a} and~\ref{thm:s:b}.
For \ASN{thm:s:c} we assume \Cref{ntn:X} and thus $X=\cX$.
Since the singular type is a M\"obius invariant,
it follows from \Cref{thm:quartet} that $\cX$ contains
\Wlog either a Bohemian or Cliffordian quartet.
If $\cX$ contains a Cliffordian quartet, then
$\SingType\cX\in\{\Diii,\,\Div,\,\Dv\}$ by \Cref{prp:P,prp:D345}.
If $\cX$ contains a Bohemian quartet, then
$\SingType\cX\in\{\Di,\,\Dii\}$ by \Cref{prp:D1D2}.
\end{proof}

\begin{proof}[Proof of \Cref{cor:BC}]
The 1st item follows from \Cref{thm:s},
the 2nd and 3rd items follow from \citep[Propositions~27 and~30]{2024fact},
and the 4th item follows from \citep[Proposition~25]{2024great}.
Let us consider the remaining 5th item
and suppose by contradiction that $\SingType\cX\in\{\Dii,\,\Div\}$.
In this case, there exists a double circle $V\subset\Sing\cX$
\st $C_\R\cap V=\varnothing$.
We know that $[V]=\l_0+\l_1$ and $[C]\in\{\l_0,\l_1\}$ by \ref{iv} at \Cref{def:st}.
Hence, by \Cref{def:class} there exist $V'\subset \P^1\times\P^1$ of bidegree $(1,1)$
and $C'\subset\P^1\times\P^1$ of bidegree $(1,0)$ \st $\varphi(V')=V$, $\varphi(C')=C$
and $C'\cap V'$ consists of pairs of complex conjugate points.
We arrived at a contradiction as $[V]\cdot[C]=|C'\cap V'|=1$
and thus $C'_\R\cap V'\neq\varnothing$.
\end{proof}

\begin{proof}[Proof of \Cref{cor:deg}]
Up to M\"obius equivalence, we may assume that the degree of the Euclidean model~$\bU(X)$
is equal to the degree of the stereographic projection~$\pr_p(X)$ for some $p\in\S^3_\R$.
By \Cref{thm:s}\ref{thm:s:a}, we have $\deg X\in\{2,4,8\}$
and thus $1\leq \deg\pr_p(X)\leq 8$.
Suppose by contradiction that $\deg\pr_p(X)=5$ so that $\deg X=8$.
In this case $p\in\Sing X_\R$ has multiplicity $m=3$, since $\deg\pr_p(X)=\deg X-m$
as a direct consequence of the definitions.
It follows from \Cref{thm:s}\ref{thm:s:c} and \ref{iii} at \Cref{def:st}
that $p$ lies on $2$ complex double lines.
We arrived at a contradiction as the complex point~$p$ is non-real.
\end{proof}

\section{The classification of visible types}
\label{sec:VisType}

In this section, we prove \Cref{cor:VisType} by showing that
\begin{gather*}
\VisType\cX
\in
\{
\Di[=],\,
\Dii[+],\,
\Diii[~],\,
\Diii[\circ],\,
\Diii[=],\,
\Diii[-],\,
\\
\Div[\infty],\,
\Div[\alpha],\,
\Div[\sigma],\,
\Div[+],\,
\Div[=],\,
\Div[-],\,
\Dv[~],\,
\Dv[\circ]
\}.
\end{gather*}

We define $\VisPoints U$ as the intersection of the set of visible points of $\cX$ with the subset~$U\subseteq\cX$.
An \df{arc} is homeomorphic to the unit interval~$[0,1]\subset\R$.

\begin{lemma}
\label{lem:D12}
If $\SingType\cX\in\{\Di,\Dii\}$, then
\[
\VisType\cX\in\{\Di[=],\,\Dii[+]\}
\]
and there exist topological spheres $S$ and $S'$ \st
\[
\VisPoints\cX=S\cup S'
\quad\text{and}\quad
\VisPoints(\Sing\cX)=S\cap S'.
\]
\end{lemma}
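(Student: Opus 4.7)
By \Cref{cor:BC}, the surface $\cX$ is M\"obius equivalent to a Bohemian surface; since both the visible type and the homeomorphism type of the visible locus are M\"obius invariants, I may assume that $\bU(\cX)$ equals the Zariski closure of the Minkowski sum $A+B$ for two real circles $A,B\subset\R^3$ lying in distinct affine planes $P_A,P_B$. By \Cref{prp:BGE8}, the restriction to real points of the desingularization $\varphi\c\P^1\times\P^1\to\cX$ yields, after stereographic projection, a continuous surjection $\phi\c T\to\VisPoints\cX$, where $T:=A\times B\cong S^1\times S^1$ is the real parameter torus and $\phi(a,b)=a+b$; surjectivity holds because every visible point lies on a real circle in one of the two pencils $\{A+b\}_{b\in B}$ and $\{a+B\}_{a\in A}$.

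Next I would analyze the non-injectivity of $\phi$. Distinct pairs satisfy $\phi(a_1,b_1)=\phi(a_2,b_2)$ if and only if $a_1-a_2=b_2-b_1$, which forces the common displacement to lie on the line $\ell:=(P_A-P_A)\cap(P_B-P_B)$. Writing $\rho_A\c A\to A$ and $\rho_B\c B\to B$ for the unique non-trivial involutions whose two fixed points span the diameters perpendicular to $\ell$, the involution $\tau\c T\to T$, $(a,b)\mapsto(\rho_A\,a,\rho_B\,b)$, captures precisely these identifications: its four fixed points correspond to pinch points of $\cX$, and the preimage $C:=\phi^{-1}(\VisPoints(\Sing\cX))$ is the $\tau$-invariant real algebraic curve on which $\phi(x)=\phi(\tau(x))$ holds.

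By item~\ref{iv} of \Cref{def:st} the preimage of the $1$-dimensional singular locus in $\P^1\times\P^1$ has total bidegree~$(2,2)$, so $C$ is a real curve of Euler characteristic~$-2$ on $T$; thus $T\setminus C$ has Euler characteristic~$2$ and splits into two open-disk components $D_1,D_2$. Setting $S:=\phi(\overline{D_1})$ and $S':=\phi(\overline{D_2})$, the boundary identifications imposed by $\phi$ via $\tau$ collapse each closed disk to a topological $2$-sphere, so $\VisPoints\cX=S\cup S'$ with $S\cap S'=\phi(C)=\VisPoints(\Sing\cX)$. Direct inspection of the singular curve then shows that $\phi(C)$ is homeomorphic to the symbol $=$ in the $\Di$ case (two parallel arcs from the image of the irreducible preimage of the Viviani curve) and to the symbol $+$ in the $\Dii$ case (two arcs meeting transversally at the real incidence point of the two double circles), yielding $\VisType\cX\in\{\Di[=],\Dii[+]\}$. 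The main obstacle is the topological verification that the two pieces $S,S'$ are genuinely spheres, rather than annuli or more exotic surfaces; this requires careful bookkeeping of the boundary gluings under $\tau$, most naturally handled either via an Euler-characteristic computation on the quotient $T/{\sim}$, or by direct inspection of the normal forms in \Cref{tab:D123} together with a continuous-deformation argument showing that all Bohemian surfaces of a given singular type share the same visible topology.
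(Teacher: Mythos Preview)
Your approach via the involution $\tau$ on the parameter torus is genuinely different from the paper's.  The paper argues entirely geometrically: it introduces for each $b\in B$ the \emph{complementary index}~$b^*$ (the unique $b^*$ with $A+\{b\}$ and $A+\{b^*\}$ coplanar), observes there are exactly two fixed indices $\rs,\rt\in B$, and then splits into the cases $\radius(A)>\radius(B)$ and $\radius(A)=\radius(B)$.  In the first case each pair $A+\{b\}$, $A+\{b^*\}$ meets in two points tracing two disjoint arcs, and a separate contradiction argument rules out $\Dii[=]$; in the second case two of these pairs meet tangentially, producing the crossing in~$+$.  The spheres come from sweeping the planes parallel to $\plane(A)$ between the extremal positions $\plane(A+\{\rs\})$ and $\plane(A+\{\rt\})$, each plane contributing two circles.

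Your argument contains a concrete error.  The claim that $C=\phi^{-1}(\VisPoints(\Sing\cX))$ has Euler characteristic $-2$ does not follow from the bidegree~$(2,2)$ and is in fact false in the $\Di$ case.  Writing $r_A\cos\alpha+r_B\cos\beta=0$ for the defining equation of $C$ on $T$ (in suitable coordinates where $\tau(\alpha,\beta)=(\pi-\alpha,\pi-\beta)$), one checks that when $r_A\neq r_B$ the real locus $C$ consists of two disjoint smooth circles, each of homology class $(0,\pm1)$; hence $\chi(C)=0$ and $T\setminus C$ is a pair of open \emph{annuli}, not open disks.  Your sphere argument therefore breaks as stated.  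It is salvageable: $\tau$ preserves each component $C_i$ (with two fixed points on it) while swapping the two annuli, so $\phi$ restricted to each closed annulus folds each boundary circle to an arc, and an annulus with both boundary circles collapsed to arcs is indeed a sphere by an Euler-characteristic count.  But this repair is exactly the ``careful bookkeeping'' you flag as the main obstacle and do not carry out.

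Separately, the phrase ``direct inspection of the singular curve'' is doing too much work.  You never establish the dichotomy $\Di\Leftrightarrow r_A\neq r_B$ and $\Dii\Leftrightarrow r_A=r_B$, nor do you exclude the a~priori possibilities $\Di[+]$ and $\Dii[=]$; in the paper the latter is handled by a nontrivial argument using the second pencil $\{a\}+B$ and the fact that the putative line segment cannot lie in a single line.
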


\begin{proof}
By \Cref{cor:BC}, we may assume up to M\"obius equivalence that
the Euclidean model~$\bU(\cX)$ is equal to $A+B$
for some circles $A,B\subset\R^3$, where
$A+B$ denotes the Zariski closure of~$\set{a+b}{a\in A,~b\in B}$.
Let $\plane(A)$ denote the plane in $\R^3$ that is spanned by $A$.
If $b\in B$, then its \df{complementary index}~$b^*$ is defined as an element in $B$ \st
\[
\plane(A+\{b\})\cap\bU(\cX)=(A+\{b\})\cup (A+\{b^*\}).
\]
We observe that for all $b\in B$, we have that $A+\{b\}$ is a circle in~$\bU(\cX)$ \st
\begin{Mlist}
\item $\plane(A+\{b\})$ is parallel to $\plane(A)$,
\item $\radius(A+\{b\})=\radius(A)$,
\item $\cent(A+\{b\})$ lies on the circle $B+\{\cent(A)\}$, and
\item $\radius(B+\{\cent(A)\})=\radius(B)$.
\end{Mlist}
As a straightforward consequence, we find that the complementary index exists and is unique.
Moreover, there exists exactly two elements~$\rs,\rt\in B$ \st $\rs^*=\rs$ and $\rt^*=\rt$.
In \Cref{fig:D12}, we illustrated examples for $A+B$
together with circles $A+\{\beta\}$ \st $\beta\in\{\bb,\bb^*,\aa,\aa^*,\rs,\rt\}$
for some $\aa,\bb\in B$ \st $\bb\neq\bb^*$ and $\aa\neq\aa^*$.
\begin{figure}[!ht]
\centering
\csep{3mm}
\begin{tabular}{cccc}
\DIb &
\fig{3}{3}{translation-A0} &
\fig{3}{3}{translation-A1} &
\fig{3}{3}{translation-A2} \\
\DIIb &
\fig{3}{3}{translation-B0} &
\fig{3}{3}{translation-B1} &
\fig{3}{3}{translation-B2}
\end{tabular}
\caption{The visible points in the surface $A+B$ for some circles $A,B\subset\R^3$
\st $\radius(A)>\radius(B)$ in the 1st row and $\radius(A)=\radius(B)$
in the 2nd row. The 3rd and 4th columns illustrate the circles $A+\{\beta\}$ in
case $\beta\in\{\bb,\bb^*,\aa,\aa^*\}$ and $\beta\in\{\rs,\rt\}$, \resp.
The circles $A+\{\bb\}$ and $A+\{\bb^*\}$ intersect in two points.
The circles $A+\{\aa\}$ and $A+\{\aa^*\}$ intersect tangentially in one point
and are only illustrated in the 2nd row.}
\label{fig:D12}
\end{figure}

Since $A+B$ contains many plane sections consisting of two circles, we deduce that $\deg(A+B)=4$.
Therefore, the center $\vv\in\S^3$ of stereographic projection is the unique point in $\Sing\cX$ of multiplicity $4$.
It follows that either
\begin{Mlist}
\item
$\SingType\cX=\Di$  and $\Sing\bU(\cX)$ consists of an irreducible conic, or
\item
$\SingType\cX=\Dii$ and $\Sing\bU(\cX)$ consists of two coplanar lines.
\end{Mlist}
First, let us suppose that $\radius(A)>\radius(B)$.
We already established that the center of the circle~$A+\{b\}$ lies for all $b\in B$
on a circle whose radius is equal to the radius of $B$.
Since $\radius(A+\{b\})=\radius(A)>\radius(B)$,
it follows that
for all $b\in B\setminus\{\rs,\rt\}$ we have
$
(A+\{b\})\cap (A+\{b^*\})=\{\pp_b,\qq_b\}.
$
We deduce that the sets~$\{\pp_b\}_{b\in B}$ and $\{\qq_b\}_{b\in B}$ form two disjoint arcs
with end points in $A+\{\rs,\rt\}$,
which implies that $\VisPoints(\Sing\bU(\cX))$ is homeomorphic to the symbol $=$.
Now suppose by contradiction that $\VisType\cX=\Dii[=]$ and
let $L\subset\VisPoints(\Sing\bU(\cX))$ be one of the two line segments.
We deduce using \Cref{fig:D12b} that there exists a circular arc~$C\subset A$
\st
for all $\alpha\in C$ there exists $\alpha^*\in C$ \st
\[
\plane(\{\alpha\}+B)=\plane(\{\alpha^*\}+B)
\quad\text{and}\quad
|(\{\alpha\}+B)\cap (\{\alpha^*\}+B)\cap L|=2.
\]
Since $\plane(\{\alpha\}+B)$ is parallel to $\plane(B)$
and meets $L$ in two points for all $\alpha\in C$,
we find that $L$ cannot be contained in a line, and thus we arrived at a contradiction.
The only remaining possibility is that $\VisType\cX=\Di[=]$.

\begin{figure}[!ht]
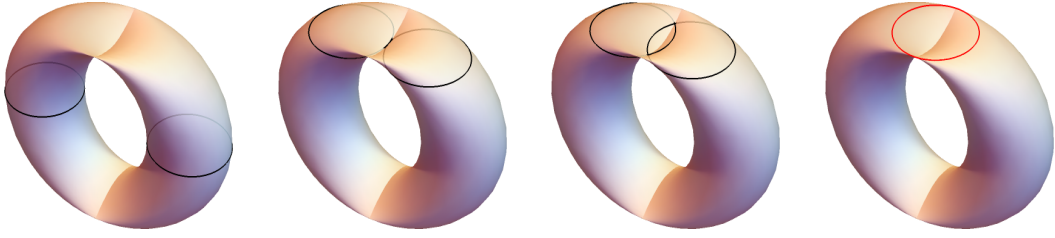

\centering
\csep{3mm}
\begin{tabular}{cccc}
\fig{3}{3}{translation-C0} &
\fig{3}{3}{translation-C1} &
\fig{3}{3}{translation-C2} &
\fig{3}{3}{translation-C3}
\end{tabular}
\caption{The visible points in the surface $A+B$ for some circles $A,B\subset\R^3$
\st $\radius(A)>\radius(B)$ together with pairs of coplanar circles that
meet at an arc in the visible singular locus in either zero, one or two points.}
\label{fig:D12b}
\end{figure}

Finally, let us suppose that $\radius(A)=\radius(B)$.
In this case, there exists $\aa,\aa^*\in B$ \st
$\plane(A+\{\aa\})=\plane(A+\{\aa^*\})$
and $(A+\{\aa\})\cap(A+\{\aa^*\})=\{\qq\}$,
where $\qq\in\VisPoints(\Sing\bU(\cX))$ (see \Cref{fig:D12}).
If $b\in B$ \st $\plane(A+\{b\})$ lies between either
\begin{Mlist}
\item
$\plane(A+\{\rs\})$ and  $\plane(A+\{\aa\})$, or
\item
$\plane(A+\{\aa\})$ and  $\plane(A+\{\rt\})$,
\end{Mlist}
then $(A+\{b\})\cap (A+\{b^*\})$ consists of two points that trace out two arcs,
and these arcs meet at the point~$\qq$.
Hence, $\VisPoints(\Sing\bU(\cX))$ is homeomorphic to the symbol~$+$.
This implies that $\VisType\cX=\Dii[+]$.

We established that $\VisType\cX\in\{\Di[=],\,\Dii[+]\}$.
For the remaining assertion we observe that
the parallel plane sections of~$\bU(\cX)$ that lie between
$\plane(A+\{\rs\})$ and $\plane(A+\{\rt\})$
trace out two topological spheres $S$ and $S'$
so that $\VisPoints(\bU(\cX))=S\cup S'$ and $\VisPoints(\Sing\bU(\cX))=S\cap S'$.
Since $\VisPoints(\bU(\cX))$ is compact, we deduce that $\VisPoints\cX$
is homeomorphic to $\VisPoints(\bU(\cX))$. This concludes the proof.
\end{proof}

\begin{notation}
\label{ntn:X2}
In what follows, we assume in addition to \Cref{ntn:X} that
\begin{Mlist}
\item $\SingType\cX\in\{\Diii,\,\Div,\,\Dv\}$,
\item $V\subset \Sing\cX$ is a rational double curve,
\item $\varphi\c\P^1\times\P^1\to\cX$ is a desingularization, and
\item $W\subset\P^1\times\P^1$ is the 1-dimensional component in~$\varphi^{-1}(V)$.
\end{Mlist}
If $C\subset\P^1\times\P^1$ is a complex and possibly reducible curve,
then we denote by $[C]$ its divisor class in the N\'eron-Severi lattice~$N(\cX)$.
Notice that $[V]=[W]$ as a direct consequence of the definitions.
\END
\end{notation}

\begin{lemma}
\label{lem:vis}
We have
$\VisPoints\cX=\varphi(\P^1_\R\times\P^1_\R)$
and
\[
\VisPoints V=\varphi(W_\R)=V_\R\setminus\set{\varphi(\qq)}{\qq\in W\setminus W_\R,~\varphi(\qq)=\varphi(\oq) }.
\]
\end{lemma}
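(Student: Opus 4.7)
The plan is to establish the three equalities in sequence, each deduced quickly from the previous.

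To prove $\VisPoints \cX = \varphi(\P^1_\R \times \P^1_\R)$, I would invoke \Cref{prp:BGE8} together with \RL{C}{b} to identify the two pencils of circles on $\cX$ with the two rulings of $\P^1 \times \P^1$. Every real circle in $\cX \setminus \Sing \cX$ is then, with at most finitely many exceptions, of the form $\varphi(\{a\} \times \P^1)$ or $\varphi(\P^1 \times \{b\})$ for some $a,b \in \P^1_\R$, with real points $\varphi(\{a\} \times \P^1_\R)$ or $\varphi(\P^1_\R \times \{b\})$ respectively. The union of these real points is a dense subset of $\varphi(\P^1_\R \times \P^1_\R)$, and since $\P^1_\R \times \P^1_\R$ is compact and $\varphi$ is continuous, this image is already closed; combined with the description of $\VisPoints \cX$ as the closure of the real points of real circles in $\cX \setminus \Sing \cX$, this yields equality.

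Intersecting with $V$ then gives $\VisPoints V = V \cap \varphi(\P^1_\R \times \P^1_\R)$. Since $B(\cX) = \varnothing$ by \Cref{prp:BGE8}, the morphism $\varphi$ contracts no complex curve to a point, so the set-theoretic preimage $\varphi^{-1}(V)$ coincides with its 1-dimensional part $W$. Hence any point of $V \cap \varphi(\P^1_\R \times \P^1_\R)$ has a preimage in $W_\R$, while the reverse inclusion is immediate, proving $\VisPoints V = \varphi(W_\R)$. For the final equality, let $T = \{\varphi(\qq) : \qq \in W \setminus W_\R,~ \varphi(\qq) = \varphi(\oq)\}$. Given $p \in V_\R$, surjectivity of $\varphi|_W$ provides a preimage $q \in W$; equivariance forces $\varphi(\oq) = \sigma_\cX(p) = p$, placing $p$ in $\varphi(W_\R)$ when $q$ is real and in $T$ when $q$ is non-real. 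This yields $V_\R \subseteq \varphi(W_\R) \cup T$. For disjointness, a direct intersection-number computation using the classes listed in item \ref{iv} of \Cref{def:st} shows that $\varphi|_W$ has generic degree at most two in every case, so its set-theoretic fibers contain at most two points, ruling out a real preimage coexisting with a non-real conjugate pair of preimages.

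The main obstacle is the disjointness step at the end: bounding the set-theoretic fiber cardinality of $\varphi|_W$ not just generically but at every point, including ramification points of an irreducible double cover and intersection points of the components of $W$ in the $\Div$ case where $V = V_1 \cup V_2$. In the degenerate $\Dv$ subcase with $[V] = \ell_0$ the map $\varphi|_W$ is birational, $T = \varnothing$, and the claim holds trivially; in all remaining cases the fiber bound of two must be verified from the bidegree of $W$ and the degree of $V$ in $\P^4$.
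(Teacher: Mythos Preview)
Your overall strategy matches the paper's, but there is a genuine gap in your second paragraph. You write: ``Since $B(\cX) = \varnothing$ by \Cref{prp:BGE8}, the morphism $\varphi$ contracts no complex curve to a point, so the set-theoretic preimage $\varphi^{-1}(V)$ coincides with its 1-dimensional part $W$.'' The implication does not follow. That $\varphi$ has finite fibers says nothing, by itself, about whether $\varphi^{-1}(V)$ is pure of dimension one; a finite morphism onto a non-normal target can in principle produce isolated points in the preimage of a curve. Concretely, if $V$ meets another singular component $M$ at a point~$r$, then $\varphi^{-1}(r)$ contains points lying on the $1$-dimensional part of $\varphi^{-1}(M)$, and you have not argued that all such points also lie on~$W$.

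The paper handles exactly this issue, but differently: instead of claiming $\varphi^{-1}(V)=W$, it works with $\varphi^{-1}(V)\cap\P^1_\R\times\P^1_\R$ and argues by contradiction that any real point of $\varphi^{-1}(V)\setminus W$ would force $\SingType\cX=\Div$ (since the complex double lines carry no real points), and then in that case checks via the intersection $[V_1]\cdot[V_2]=2$ that $\varphi^{-1}(V_1\cap V_2)=W_1\cap W_2\subset W_1$, yielding the contradiction. Your route could be repaired by supplying an equidimensionality argument for preimages under a finite morphism from a smooth surface, but that requires more than ``no contraction''.

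On the positive side, your treatment of the final disjointness $\varphi(W_\R)\cap T=\varnothing$ via the fiber bound $|\varphi|_W^{-1}(p)|\le 2$ is more explicit than the paper's own proof, which states the implication in only one direction. Your case split (the birational $\Dv$ subcase with $[V]=\l_0$ versus the $2{:}1$ cases) is the right way to close this, and is something the paper leaves implicit.
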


\begin{proof}
The preimages \wrt $\varphi$ of hyperplane sections of $\cX_\R$
correspond to curves in $\P^1_\R\times\P^1_\R$ without isolated points and thus
$\VisPoints\cX=\varphi(\P^1_\R\times\P^1_\R)$ by the definition of ``visible point''.
As a direct consequence, we obtain that
\[
\VisPoints V=\varphi(\varphi^{-1}(V)\cap \P^1_\R\times\P^1_\R).
\]
This implies that the preimage \wrt $\varphi$ of the points in~$V_\R$ that are not
visible must contain at least one pair of complex conjugate points.

Now suppose by contradiction that $\varphi(\varphi^{-1}(V)\cap \P^1_\R\times\P^1_\R)\neq \varphi(W_\R)$.
In this case there exists $\vv\in\varphi^{-1}(V)\setminus W$ \st $\varphi(\vv)\in V$.
Hence, $\vv\in \varphi^{-1}(V\cap M)$ for some singular component $M\subset\Sing\cX$ different from $V$.
Recall that $\SingType\cX\in\{\Diii,\,\Div,\,\Dv\}$ by assumption.
Since the complex double lines in $\cX$ do not have real points,
we find that $\SingType\cX=\Div$ so that $V$ and $M$ are both double circles
with class~$\l_0+\l_1$.
We deduce that $\varphi^{-1}(V\cap M)=W\cap \varphi^{-1}(M)$.
We arrived at a contradiction as $\vv\in\varphi^{-1}(V\cap M)$ and $\vv\notin W$.
\end{proof}

\begin{lemma}
\label{lem:ram}
Suppose that $W$ is irreducible with $|W_\R|=\infty$
and $\varphi|_W\c W\to V$ a 2:1 morphism
with $c$ complex ramification points.
\begin{claims}
\item\label{lem:ram:a}
$\pp\in W_\R$ is a ramification point of $\varphi|_W$
if and only if either
$\pp\in\Sing W$
or
$\varphi(\pp)$ is the end point of an arc in $\VisPoints V$.
\item\label{lem:ram:b}
$c\leq [W]\cdot ([W]-2\,\l_0-2\,\l_1)-|\Sing W|+4$.
\item\label{lem:ram:c}
If $[W]\cdot ([W]-2\,\l_0-2\,\l_1)=-2$, then $\Sing W=\varnothing$.
\end{claims}
\end{lemma}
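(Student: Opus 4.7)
The plan is to combine a local analytic analysis for claim (a) with adjunction on $\P^1 \times \P^1$ and Riemann-Hurwitz applied to the normalization $\widetilde W \to V$ for claims (b) and (c). Throughout I will use that $V$ is smooth rational in each of the singular types $\Diii$, $\Div$, $\Dv$, so that $V \cong \P^1$ and Riemann-Hurwitz applies cleanly.

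For (a), the key observation is that since $\varphi|_W$ is a finite $2{:}1$ morphism, every scheme-theoretic fiber has length $2$, which forces any singular point $\pp \in W$ to have at most two branches. When $\pp$ is a smooth real point of $W$, I would choose local real analytic coordinates in which $\varphi|_W$ has the form $v = w$ (if unramified) or $v = w^2$ (if ramified): in the unramified case a real neighborhood of $\varphi(\pp)$ lies in $\varphi(W_\R) = \VisPoints V$ by \Cref{lem:vis}, so $\varphi(\pp)$ is interior to an arc; in the ramified case $\VisPoints V$ lies on one side of $\varphi(\pp)$, making it an endpoint. When $\pp \in \Sing W$, both branches at $\pp$ pass through $\pp$ and map to $\varphi(\pp)$, so the scheme-theoretic fiber is supported at $\{\pp\}$, showing $\pp$ is a ramification point. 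The converse also follows from the same local models.

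For (b), let $\nu\c \widetilde W \to W$ be the normalization. Riemann-Hurwitz for the $2{:}1$ morphism $\varphi|_W \circ \nu\c \widetilde W \to V \cong \P^1$ yields $R = 2 g(\widetilde W) + 2$, where $R$ denotes the degree of the ramification divisor. A case analysis at each ramification point of $\varphi|_W$ shows that $c \leq R + |\Sing W|$: a smooth ramification point contributes $1$ to both $c$ and $R$, while each singular point contributes $1$ to $c$ but only $0$ or $1$ to $R$ depending on whether its branches are étale over $V$ or not. Adjunction for the curve $W$ on $\P^1 \times \P^1$ gives
\[
2 p_a(W) - 2 = [W] \cdot ([W] + K_{\P^1 \times \P^1}) = [W] \cdot ([W] - 2\l_0 - 2\l_1),
\]
and the genus-drop inequality $g(\widetilde W) \leq p_a(W) - |\Sing W|$, which holds because the delta invariant of every singular point is at least $1$, produces
\[
c \leq 2 g(\widetilde W) + 2 + |\Sing W| \leq 2 p_a(W) + 2 - |\Sing W| = [W] \cdot ([W] - 2\l_0 - 2\l_1) + 4 - |\Sing W|.
\]

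For (c), the hypothesis combined with adjunction yields $p_a(W) = 0$. Since $p_a(W) = g(\widetilde W) + \delta(W)$ with both summands non-negative, and $\delta_p \geq 1$ at every singular point, we deduce $\delta(W) = 0$ and hence $\Sing W = \varnothing$. The main obstacle is the local count $c \leq R + |\Sing W|$ in (b), which must accommodate all possible singularity types of $W$ uniformly; the fiber-length constraint of the $2{:}1$ morphism at singular points is the key ingredient that bounds the branch behavior and keeps the combinatorics tractable.
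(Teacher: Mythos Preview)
Your proposal is correct and follows essentially the same route as the paper: part~(a) via local analysis combined with \Cref{lem:vis}, part~(b) via Riemann--Hurwitz on the normalization $\widetilde W\to V$ together with adjunction on $\P^1\times\P^1$ and the genus-drop inequality $g(\widetilde W)\le p_a(W)-|\Sing W|$, and part~(c) directly from $p_a(W)=0$. Your treatment is in fact slightly more explicit than the paper's in two places---the local normal forms $v=w$ versus $v=w^2$ in~(a), and the branch-by-branch justification of $c\le R+|\Sing W|$ in~(b)---but the underlying argument is identical.
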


\begin{proof}
\ref{lem:ram:a}
We first show the $\Leftarrow$ direction.
Notice that $\Sing W$ consists of ramification points by definition.
Now suppose that $\pp\notin\Sing W$ so that $\varphi(\pp)$ is an end point of an arc.
It follows from \Cref{lem:vis} that for all $v\in V_\R\setminus \VisPoints V$
the fiber~$\varphi^{-1}(v)$ consists of two complex conjugate points.
Moreover, for all $v\in \VisPoints V$ the preimage~$\varphi^{-1}(v)$
contains at least one point in $W_\R$.
We deduce that $\varphi(\pp)$
is a real branching point of $\varphi|_W$ and thus $\pp$ is a ramification point.

The $\Rightarrow$ direction is a straightforward consequence of the definitions.
For intuition, one could think of the projection of a circle in $\R^2$ to a line segment.
See also \citep[Figure~IV.2.13]{1977}.
We remark that unlike a nodal singularity of~$W$, a cuspidal singularity
may be send to an end point of an arc.

\ref{lem:ram:b}
Let $p_g(W)$ and $p_a(W)$ denote the \df{geometric genus} and \df{arithmetic genus}, \resp.
Now suppose that $f\c\widetilde{W}\to W$ is a desingularization of $W$ so that $\widetilde{W}$ is smooth,
$p_g(\widetilde{W})=p_g(W)$ and $f$ is a biregular isomorphism outside $\Sing W$.
Let $\tilde{c}$ denote the number of complex ramification points of the composition $\varphi|_W\circ f$.
As $V$ is rational by assumption, we have $p_g(V)=0$
and thus the Hurwitz formula at \citep[Corollary~IV.2.4]{1977} implies that
\[
\tilde{c}\leq 2\,p_g(W)+2.
\]
If $q$ is a complex ramification point of the morphism~$\varphi|_W\circ f$,
then $f(q)$ is a complex ramification point of~$\varphi|_W$ as a direct consequence of the definitions.
However, $p\in\Sing W$ may be a complex ramification point of $\varphi|_W$ but not of $\varphi|_W\circ f$.
Hence, we deduce that
\[
c\leq 2\,p_g(W)+2+|\Sing W|.
\]
It follows from the genus formula at \citep[Exercise~IV.1.8a]{1977} that
\[
0\leq p_g(W)\leq p_a(W)-|\Sing W|,
\]
and thus
\[
c\leq 2\,p_a(W)-|\Sing W|+2.
\]
Since $\k=-2\,\l_0-2\,\l_1$ is the canonical class of $\P^1\times\P^1$,
it follows from the arithmetic genus formula at \citep[Proposition~V.1.5 and Exercises~IV.1.8 and~V.1.3]{1977}
that
\[
p_a(W)=\tfrac{1}{2}\,[W]\cdot([W]+\k)+1.
\]
We substitute this formula for $p_a(W)$ in the previous
upper bound for $c$ and recover the upper bound of the main assertion.

\ref{lem:ram:c}
If $[W]\cdot([W]+\k)=-2$, then $\Sing W=\varnothing$ as
$p_a(W)=\tfrac{1}{2}\,[W]\cdot([W]+\k)+1=0$ and $0\leq p_g(W)\leq  p_a(W)-|\Sing W|=-|\Sing W|$.
\end{proof}

\begin{lemma}
\label{lem:Wfin}
~
\begin{claims}
\item\label{lem:Wfin:a}
If $|W_\R|<\infty$, then $W_\R=\VisPoints V=\varnothing$.
\item\label{lem:Wfin:b}
If $[V]\cdot\l_1=1$, then $|W_\R|=\infty$ and $\VisPoints V=\varphi(W_\R)$.
\end{claims}
\end{lemma}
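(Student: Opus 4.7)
The plan is to exploit the identity $\VisPoints V = \varphi(W_\R)$ from \Cref{lem:vis}, which yields $W_\R = \varnothing \iff \VisPoints V = \varnothing$. This reduces (b) to showing $|W_\R| = \infty$ (the second equality in (b) being already part of \Cref{lem:vis}), and reduces (a) to the implication $|W_\R|<\infty \Rightarrow W_\R = \varnothing$.

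For (b), I would argue by bidegree analysis. The hypothesis $[V]\cdot\l_1 = 1$ means $W$ has bidegree $(1,\beta)$ on $\P^1\times\P^1$ for some $\beta\geq 0$. Decomposing $W$ into irreducible complex components, the non-negativity of bidegree coordinates forces exactly one component $W_0$ to have bidegree $(1,\beta_0)$, with the remaining components being horizontal fibers of type $(0,*)$. The uniqueness of $W_0$ forces $\sigma(W_0) = W_0$, making it a real irreducible curve. Since $[W_0]\cdot\l_1 = 1$, the second projection restricts to a birational map $W_0 \to \P^1$, exhibiting $W_0$ as the graph of a real rational function $\P^1 \to \P^1$ of degree $\beta_0$. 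Consequently $(W_0)_\R \cong \P^1_\R \cong S^1$, so $|W_\R| \geq |(W_0)_\R| = \infty$.

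For (a), I would proceed by cases on $[V]$, which for $\SingType \cX \in \{\Diii, \Div, \Dv\}$ lies in $\{\l_0, \l_0+\l_1, 2\l_0, 2\l_0+2\l_1\}$. The cases $[V]\in\{\l_0, \l_0+\l_1\}$ satisfy $[V]\cdot\l_1 = 1$ and are vacuous by (b). For $[V] = 2\l_0$ (a sub-case of $\Dv$), $W$ is a pair of fibers of the first projection; a complex conjugate pair of distinct fibers is disjoint, giving $W_\R = \varnothing$ immediately. The main case is $[V] = 2\l_0 + 2\l_1$ (type $\Diii$): $W$ has bidegree $(2,2)$, and since $\varphi$ contracts no curves (by \Cref{prp:BGE8}) while $\deg V = 4$, every irreducible component of $W$ must map surjectively onto $V$. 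This forces $W$ to be either irreducible of bidegree $(2,2)$ (with $\varphi|_W$ a $2{:}1$ cover) or a union $W = W_1 \cup W_2$ of two $(1,1)$-curves each birational to $V$. A smooth irreducible $W$ is an elliptic curve with $|W_\R|\in\{0,\infty\}$, and a union of two real $(1,1)$-curves yields $|W_\R|=\infty$ by the same graph argument used for (b).

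The hard part will be ruling out two sub-configurations that could a priori give finite non-empty $W_\R$: (i) an irreducible nodal bidegree $(2,2)$ curve $W$ with a real acnode (two complex conjugate branches meeting at a real point $p$), and (ii) a complex conjugate pair $W = W_1 \cup \sigma(W_1)$ with a real intersection $p \in W_1 \cap W_2$. In both, the two branches of $W$ at $p$ are exchanged by $\sigma$, so under $\varphi$ they correspond to two complex conjugate sheets of $\cX$ meeting at $v := \varphi(p) \in V$; the real locus of $\cX$ then coincides with the $1$-dimensional set $V_\R$ in a neighborhood of $v$. The plan is to observe that since $p$ is real, the horizontal and vertical fibers of $\P^1 \times \P^1$ through $p$ are real and their $\varphi$-images are two distinct real circles $C_1, C_2 \subset \cX$ through $v$, each carrying a $1$-dimensional real arc at $v$. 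Both arcs would have to lie in $\cX_\R = V_\R$ locally, forcing $C_1$ and $V$ (and likewise $C_2$ and $V$) to share infinitely many points near $v$; by irreducibility and the distinct divisor classes $[C_1]=\l_1$, $[C_2]=\l_0$, and $[V]=2\l_0+2\l_1$, this is impossible. Hence no such $p$ exists, and $|W_\R|<\infty$ forces $W_\R = \varnothing$.
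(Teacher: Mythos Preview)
Your argument for (b) is correct and essentially the same as the paper's: both exploit that $[W]\cdot\l_1=1$ forces $W$ to meet each real fiber in a single point, which must then be real. The second equality in (b) is indeed immediate from \Cref{lem:vis}.

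For (a), your case analysis is sound and correctly isolates the two delicate configurations (i) and (ii). But the argument for eliminating them has a genuine gap. You assert that near $v=\varphi(p)$ the real locus $\cX_\R$ coincides with $V_\R$, reasoning that the two complex-conjugate branches of $W$ at $p$ give rise to two complex-conjugate sheets of $\cX$ at~$v$. This conflates branches of $W$ with preimage points of~$v$. In both (i) and (ii) the node $p$ of $W$ absorbs \emph{both} preimages of $v$ under the $2{:}1$ map $\varphi|_W$, so $\varphi^{-1}(v)=\{p\}$ set-theoretically and $\cX$ has a \emph{single} local analytic sheet at~$v$. Since $p$ is real, the image $\varphi(U_p)$ of a real neighborhood $U_p\subset\P^1_\R\times\P^1_\R$ is a $2$-dimensional real piece of $\cX_\R$ through~$v$, and your circles $C_1,C_2$ lie inside it rather than in~$V_\R$. (Your picture \emph{is} correct at nearby points $v'\in V_\R\setminus\{v\}$, where the two sheets---parametrized by the two branches of $W$---really are complex conjugate and $\cX_\R$ collapses to $V_\R$; but that is precisely where $C_1,C_2$ are \emph{not}.) So the contradiction you aim for does not materialize.

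The paper takes a different, more direct route that avoids the case analysis. It argues locally: if $\pp\in W_\R$ is isolated, then through $\varphi(\pp)$ pass the two real circles $\varphi(F_i)$ and $\varphi(G_j)$; because $\varphi(\pp)\in\Sing\cX$, these two circles must share a common tangent line at~$\varphi(\pp)$, whereas at every nearby point of $\varphi(U_\pp)\setminus\{\varphi(\pp)\}$ (all smooth on~$\cX$) the corresponding pair of circles is transversal---a contradiction. The mechanism you are missing is that when $\varphi^{-1}(v)=\{p\}$ with scheme length~$\geq 2$, the differential $d\varphi_p$ drops rank, which is exactly what collapses the two circle tangents to a common line.
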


\begin{proof}
Let $\rho_1,\rho_2\c\P^1\times\P^1\to\P^1$ denote the projections to the
first and second component.
For all $i\in\P^1$, we consider the fibers
$F_i:=\rho^{-1}_1(i)$ and $G_i:=\rho^{-1}_2(i)$.
We may assume \Wlog that $[F_i]=\l_0$ and $[G_i]=\l_1$ for all $i\in\P^1$.
Recall from \Cref{prp:BGE8} that $G(\cX)=\{\l_0,\l_1\}$
and thus the class of a circle is either $\l_0$ or $\l_1$.
As $\S^3_\R$ does not contain lines, we deduce
that $\varphi(\aa)\neq\varphi(\bb)$ for all~$\aa,\bb\in F_i$ and~$i\in\P^1_\R$.
This implies that $C\subset\cX$ is a circle \st $[C]=\l_0$
if and only if $C=\varphi(F_i)$ for some point~$i\in\P^1_\R$.
Similarly, $C\subset\cX$ is a circle \st $[C]=\l_1$
if and only if $C=\varphi(G_i)$ for some point~$i\in\P^1_\R$.

\ref{lem:Wfin:a}
Suppose by contradiction that there exists $\pp\in W_\R$.
Let $U_\pp\subset \P^1_\R\times\P^1_\R\cong S^1\times S^1$ be an analytic neighborhood of~$\pp$
\st $U_\pp$ is biregular isomorphic to $\varphi(U_\pp)$.
There exists a unique pair $(i,j)\in(\P^1_\R)^2$ \st $\pp\in F_i\cap G_j$ and
thus through the point~$\varphi(\pp)$ pass the circles $\varphi(F_i)$ and $\varphi(G_j)$.
Since $\varphi(\pp)\in\Sing\cX_\R$, we find that the tangent plane at $\varphi(\pp)$ is not uniquely defined
so that the circles $\varphi(F_i)$ and $\varphi(G_j)$ must have a common tangent line
at $\varphi(\pp)$.
Now observe that $\varphi(U_\pp)$ contains through each point
in $\varphi(U_\pp)\setminus\{\varphi(\pp)\}$ two transversal circular arcs.
We arrived at a contradiction as the two circular arcs passing through $\varphi(\pp)$
must be transversal as well.
We established that $W_\R=\varphi(W_\R)=\varnothing$ so that $\VisPoints V=\varnothing$ by \Cref{lem:vis}.

\ref{lem:Wfin:b}
Since $[W]\cdot\l_1=[W]\cdot [F_i]=|W\cap F_i|=1$ for all $i\in\P^1$,
it follows that $W_\R\cap F_i\neq\varnothing$ so that $|W_\R|=\infty$.
\end{proof}

\begin{lemma}
\label{lem:Wred}
If $W$ is reducible, then $\VisPoints V=V_\R$
with either
$W_\R=V_\R=\varnothing$
or
$|W_\R|=|V_\R|=\infty$.
\end{lemma}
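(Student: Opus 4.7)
I would decompose $W$ into its irreducible components and analyze the action of the real structure on them. Since $\varphi$ is, via the isomorphism $\bO(\cX)\cong\bA(\cX)$, the anticanonical projection $\eta$, and $\eta$ contracts no complex curves (cf.\ the proof of \Cref{lem:C}), every irreducible component of $W$ must surject onto $V$. Because $\varphi|_W$ is generically 2-to-1 (as $V$ is a double curve in $\cX$), the reducibility of $W$ forces $W = W_1 \cup W_2$ with each $\varphi|_{W_i}\colon W_i \to V$ birational. In each of the singular types in play ($\Diii$: twisted quartic; $\Div$: smooth circle; $\Dv$: irreducible conic), $V$ is smooth and irreducible, so each $\varphi|_{W_i}$ is actually an isomorphism and $W_i \cong \P^1$ is smooth.

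I would then split on how $\sigma$ acts on $\{W_1,W_2\}$. In the \emph{self-conjugate case} $\sigma(W_i)=W_i$ for $i=1,2$, each $\varphi|_{W_i}$ is a real isomorphism and so restricts to a bijection $(W_i)_\R \to V_\R$; the two preimages of a generic $v \in V_\R$ are both real, hence the ``bad'' set $B$ of \Cref{lem:vis} is empty and $\VisPoints V = V_\R$. Since $V\cong \P^1$ is smooth and real rational, $V_\R$ is either empty or homeomorphic to $S^1$, and the same is true of each $(W_i)_\R$ via the isomorphism; combined with $W_\R = (W_1)_\R \cup (W_2)_\R$, the dichotomy follows.

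In the \emph{conjugate case} $\sigma(W_1) = W_2$, any real point of $W$ must lie in $W_1\cap W_2$, a finite set, so $W_\R$ is finite. For generic $v \in V_\R$ the preimages $\{p_1,p_2\}$ with $p_i \in W_i$ satisfy $p_2 = \sigma(p_1)$, forming a non-real conjugate pair, so $v$ belongs to the bad set $B$ and $\VisPoints V = V_\R \setminus B = \varphi(W_\R)$ is finite. To conclude I must show $V_\R = \varnothing$: since $V$ is smooth real rational, $V_\R$ is empty or $S^1$, and I would exclude $S^1$ by analysing the induced real structure on $V$ via the $\sigma$-equivariance of $\varphi$. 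The identity $\sigma_V(\varphi|_{W_1}(y)) = \varphi|_{W_2}(\bar{y})$ identifies the restriction of $\sigma$ to $V$ with the standard conjugation on the ambient $\P^4$; the geometric constraint imposed by $V$ arising as the coincident image of two non-real conjugate components $W_1,W_2$ then forces $V$ to be an imaginary real conic without real points. With $V_\R = \varnothing$, \Cref{lem:vis} gives $\varphi(W_\R) = \VisPoints V = \varnothing$ and hence $W_\R = \varnothing$.

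The main obstacle is the final step of the conjugate case: establishing that $V_\R = \varnothing$ when $W$ decomposes into complex conjugate components. The set-theoretic argument from the other pieces only forces $\VisPoints V = V_\R \setminus B$ to be finite, and one must separately rule out an $S^1$ of isolated non-visible real points on $V$. I expect the cleanest resolution to be a local computation near a hypothetical real point $v \in V_\R$: there the two sheets of $\cX$ meeting along $V$ are complex conjugate, so no real circle of class $[W_1]$ of $\cX$ passes through $v$, contradicting the 2-circled structure of $\cX$ and the fact that real circles of classes $\l_0$ and $\l_1$ should sweep through every real point of $\cX$ locally near a generic $v$.
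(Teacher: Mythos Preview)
Your overall approach matches the paper's: decompose $W=W'\cup W''$ with each component mapping birationally (hence, since $V$ is smooth rational, isomorphically) onto $V$, invoke \Cref{lem:vis} to identify $\VisPoints V=\varphi(W_\R)$, and split into cases. The paper phrases the split as $|W_\R|=\infty$ versus $|W_\R|<\infty$ rather than self-conjugate versus conjugate, but once one notes that $|W'_\R|=\infty$ forces $\sigma(W')=W'$, these are equivalent. Your handling of the self-conjugate case is correct and coincides with the paper's.

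The obstacle you isolate in the conjugate case---proving $V_\R=\varnothing$---is genuine, but neither of your proposed resolutions works. The first (``$V$ arising as the coincident image of two non-real conjugate components forces $V$ to be an imaginary conic'') is not an argument: the real structure on $V$ comes from the ambient $\P^4$, not from $W'$, and a non-real $W'$ can map onto a real $V$ with $V_\R\cong S^1$. The second (``real circles sweep through every real point of $\cX$ near $v$'') is circular: a non-visible real point $v$ has $\varphi^{-1}(v)\cap(\P^1_\R\times\P^1_\R)=\varnothing$, and by properness of $\varphi$ this makes $v$ isolated from $\VisPoints\cX=\varphi(\P^1_\R\times\P^1_\R)$, so there are no nearby visible points for real circles to pass through. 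The 2-circled property is a statement about a \emph{general} point and gives no purchase here; locally $\cX_\R$ could acquire a one-dimensional component along $V_\R$ (compare $\{x^2+y^2=0\}\subset\R^3$). The paper's own proof is terse at this exact step, passing directly from $W_\R=\varnothing$ (via \Cref{lem:Wfin}) to ``$\VisPoints V=V_\R=\varnothing$'' without visibly treating the conjugate sub-case. Note, however, that the downstream uses (\Cref{lem:tri} and \Cref{lem:id}) require only $\VisPoints V\in\{V_\R,\varnothing\}$, and that much you \emph{have} established.
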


\begin{proof}
As $V$ is a rational double curve by assumption, there exist complex curves~$W'$ and~$W''$
\st $W=W'\cup W''$ and $\varphi(W')=\varphi(W'')=V$.
Hence, the fiber $\varphi^{-1}(v)$ \wrt a general complex point~$v\in V$
consists of two complex points $\pp,\qq\in W$
\st $\pp\in W'$ and $\qq\in W''$.
It follows from \Cref{lem:vis} that if $v\in V_\R$,
then either
$\pp,\qq\in W_\R$ and $v$ is visible, or
$\qq=\op\notin W_\R$ and $v$ is not visible.
This implies that $\VisPoints V=\varphi(W_\R)=\varphi(W'_\R)$.
Now if $|W_\R|=\infty$, then $|W_\R'|=\infty$ and $\varphi(W_\R')=V_\R$
so that $\VisPoints V=V_\R$ with $|V_\R|=\infty$.
If $|W_\R|<\infty$, then $W_\R=\varnothing$ by \Cref{lem:Wfin}
and thus $\VisPoints V=V_\R=\varnothing$.
\end{proof}

\begin{lemma}
\label{lem:tri}
Exactly one of the following three scenarios holds true:
\begin{enumerate}[topsep=0pt,itemsep=0pt,leftmargin=10mm,label=(\Roman*),ref=(\Roman*)]
\item\label{tri:i}
$\VisPoints V=V_\R$ and $|V_\R|=\infty$.
\item\label{tri:ii}
$\VisPoints V=\varnothing$ and $[V]\cdot \l_1\neq 1$.
\item\label{tri:iii}
$\VisPoints V$ consists of $\alpha$ disjoint arcs, where
\[
1\leq \alpha\leq \tfrac{1}{2}\,[V]\cdot([V]-2\,\l_0-2\,\l_1)+2
\]
and $[V]\cdot(\l_0+\l_1)\neq\tfrac{1}{2}\deg V$.

\end{enumerate}
\end{lemma}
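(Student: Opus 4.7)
The plan is to establish mutual exclusivity first and then existence via a case analysis on $W$. Since $\SingType\cX\in\{\Diii,\Div,\Dv\}$ forces $V$ to be a smooth rational curve (twisted quartic, circle, or conic), the real locus $V_\R$ is either empty or homeomorphic to a single topological circle, and in particular is never a nonempty finite disjoint union of arcs. This already renders the three scenarios pairwise incompatible: (II) excludes the others by emptiness, while (I) forces $\VisPoints V$ to contain a full circle and hence cannot coincide with a nonempty disjoint union of arcs as in (III).

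For existence, I would first dispose of the cases covered by the preceding lemmas. If $W$ is reducible, \Cref{lem:Wred} gives $\VisPoints V=V_\R$ with $V_\R$ either empty or infinite, immediately yielding (I) when $|V_\R|=\infty$; in the empty case the numerical condition $[V]\cdot\l_1\neq 1$ of (II) follows by contradiction, for if $[V]\cdot\l_1=1$ then $\RL{Wfin}{b}$ produces $|W_\R|=\infty$, forcing $|V_\R|=\infty$ by \Cref{lem:Wred} again. If instead $W$ is irreducible with $|W_\R|<\infty$, then $\RL{Wfin}{a}$ gives $\VisPoints V=\varnothing$, and the contrapositive of $\RL{Wfin}{b}$ yields $[V]\cdot\l_1\neq 1$, so (II) holds.

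The substantive case is $W$ irreducible with $|W_\R|=\infty$, where $\varphi|_W\c W\to V$ is generically $2{:}1$ (since $V$ is a double curve) and $\VisPoints V = \varphi(W_\R)$ by \Cref{lem:vis}. Because $V_\R\cong S^1$, the image $\varphi(W_\R)$ is a closed subset of $V_\R$, and by continuity any boundary point between the visible and invisible portions must be a real point of $V$ where the two sheets of the $2{:}1$ cover collide, i.e., the image of a real ramification point of $\varphi|_W$. Combining this observation with $\RL{ram}{a}$, either $\VisPoints V = V_\R$ (giving (I)), or $\VisPoints V$ is a disjoint union of $\alpha\geq 1$ closed arcs. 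In the latter case each arc contributes two distinct ramification endpoints, so $2\alpha\leq c$, where $c$ is the number of real ramification points; combined with $\RL{ram}{b}$ and $[V]=[W]$ this gives $\alpha\leq\tfrac{1}{2}[V]\cdot([V]-2\l_0-2\l_1)+2$. Finally, since the pullback via $\varphi$ of a hyperplane section of $\cX$ has class $2\l_0+2\l_1$ and $\varphi|_W$ has degree $2$, a direct calculation gives $\deg V = [V]\cdot(\l_0+\l_1)$, so $[V]\cdot(\l_0+\l_1)=\deg V\neq\tfrac{1}{2}\deg V$ since $\deg V>0$.

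The main obstacle is the topological step in the third case, namely making rigorous that $\varphi(W_\R)$ is precisely a union of closed arcs, ruling out exotic closed subsets with isolated points (which could in principle arise from singularities of $W$) or other pathologies. This will require a careful continuity analysis of $\varphi|_{W_\R}$ as a continuous map between compact real $1$-dimensional spaces, using $\RL{ram}{a}$ in both directions to identify every boundary between visible and invisible portions of $V_\R$ with a real ramification point, and leveraging $\RL{ram}{c}$ where available to control the singularities of $W$.
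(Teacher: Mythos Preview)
Your approach is essentially the paper's: dispose of reducible $W$ and $|W_\R|<\infty$ via \Cref{lem:Wred,lem:Wfin}, then in the irreducible infinite case use \Cref{lem:ram} to bound the number of arcs and extract the degree inequality. Your degree argument is correct and equivalent to the paper's (indeed $\deg V=[V]\cdot(\l_0+\l_1)$ precisely because the map has degree~$2$).

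There is one genuine gap. You assert that $\varphi|_W$ is generically $2{:}1$ ``since $V$ is a double curve'', but a double curve can be cuspidal along its generic point rather than nodal, in which case $\varphi|_W$ is birational. Concretely, this happens when $\SingType\cX=\Dv$ with $[V]=\l_0$: then $W$ is a bidegree~$(1,0)$ fiber mapping isomorphically onto the conic~$V$. The paper handles this by allowing $d\in\{1,2\}$ and observing that if $d=1$ then $\varphi|_{W_\R}$ is a bijection onto $V_\R$, forcing $\VisPoints V=V_\R$ and hence scenario~(I); only after assuming $\VisPoints V\notin\{V_\R,\varnothing\}$ does the paper conclude $d=2$. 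Your subsequent dichotomy ``either $\VisPoints V=V_\R$, or a union of arcs'' happens to absorb the $d=1$ case correctly, but your justification via ramification of a $2{:}1$ cover does not apply there, so the argument as written is incomplete. The fix is a one-line case split on $d$ before invoking \Cref{lem:ram}.

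The ``obstacle'' you flag---that $\varphi(W_\R)$ is a finite union of closed arcs rather than something exotic---is handled in the paper exactly as you outline: once $d=2$, \RL{ram}{a} identifies arc endpoints with real ramification points, and \RL{ram}{b} bounds the total number of complex ramification points (of which the real ones are a subset); possible singularities of $W$ are already absorbed into the bound in \RL{ram}{b}.
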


\begin{proof}
If $W$ is reducible, then either \ref{tri:i} or \ref{tri:ii} holds by \RL{Wfin}{b} and \Cref{lem:Wred}.
If $|W_\R|<\infty$, then \ref{tri:ii} holds by \Cref{lem:Wfin}.

In the remainder of this proof we assume that
$W$ is irreducible with
$|W_\R|=\infty$ and
$\VisPoints V\notin\{V_\R,\varnothing\}$.

Because $V$ is a double curve, the restriction~$\varphi|_W\c W\to V$
is a $d:1$ morphism \st $d\in\{1,2\}$.
Since $\VisPoints V\notin\{V_\R,\varnothing\}$, we deduce that $\alpha\geq 1$ and $d=2$.
It follows from \RL{ram}{b} with $[V]=[W]$ that
\[
2\alpha\leq [V]\cdot([V]-2\,\l_0-2\,\l_1)+4.
\]
Let $\eta\c\bA(\cX)\to\cX$ denote the anticanonical projection
and let $\psi_{-k}\c\P^1\times\P^1\to\P^8$ denote the map associated to the anticanonical class~$-\k=2\,\l_0+2\,\l_1$.
The singular component~$V\subset\P^4$ is via $\eta$
a linear projection of the curve~$U:=\psi_{-\k}(W)\subset\P^8$ in the anticanonical model~$\bA(\cX)$.
As a direct consequence of the definitions, we find that
$
\deg U=-\k\cdot [W]=2\cdot [V]\cdot(\l_0+\l_1).
$
Since $d=2$, we deduce that $\deg U\neq \deg V$ and thus
\[
[V]\cdot(\l_0+\l_1)\neq \tfrac{1}{2}\deg V.
\]
We established that \ref{tri:iii} holds.
This concludes the proof as the three cases do not overlap
due to their specifications.
\end{proof}

\begin{lemma}
\label{lem:circle}
Suppose that $V\subset\Sing\cX$ is a double circle \st $[V]=\l_0+\l_1$ and $\VisPoints V\neq V_\R$.
Then $\VisPoints V$ consists of an arc with an end point~$\pp$
if and only if
$\pp$ is a real branching point of $\varphi|_W$.
\end{lemma}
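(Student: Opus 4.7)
The plan is to concatenate \Cref{lem:Wred,lem:Wfin,lem:tri,lem:ram} under the standing hypothesis. First, I compute
\[
[W]\cdot([W]-2\,\l_0-2\,\l_1)=(\l_0+\l_1)\cdot(-\l_0-\l_1)=-2,
\]
so \RL{ram}{c} yields $\Sing W=\varnothing$.

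Next, I verify that $W$ is irreducible with $|W_\R|=\infty$ and that $\varphi|_W\c W\to V$ is a $2\c 1$ morphism. If $W$ were reducible, \Cref{lem:Wred} would force $\VisPoints V=V_\R$, contradicting the hypothesis. Since $[V]\cdot\l_1=(\l_0+\l_1)\cdot\l_1=1$, \RL{Wfin}{b} supplies $|W_\R|=\infty$ and $\VisPoints V=\varphi(W_\R)$. Finally, $V$ being a double curve of degree two, whereas $W$ embeds as a curve of degree $-\k\cdot[W]=4$ in the anticanonical model, forces $\varphi|_W$ to be $2\c 1$.

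I then apply \Cref{lem:tri} to determine the shape of $\VisPoints V$. Scenario~\ref{tri:i} is excluded by the hypothesis $\VisPoints V\neq V_\R$, and scenario~\ref{tri:ii} is excluded by $[V]\cdot\l_1=1$. We are thus in scenario~\ref{tri:iii}, and the upper bound
\[
\alpha\leq\tfrac{1}{2}\,[V]\cdot([V]-2\,\l_0-2\,\l_1)+2=1
\]
forces $\alpha=1$, so $\VisPoints V$ is a single arc.

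To finish the biconditional, I invoke \RL{ram}{a}: because $\Sing W=\varnothing$, a point $\qq\in W_\R$ is a ramification point of $\varphi|_W$ if and only if $\varphi(\qq)$ is an endpoint of the arc in $\VisPoints V$. For any $\pp\in V_\R$, a preimage $\qq$ of $\pp$ lies in $W_\R$ precisely when $\pp\in\varphi(W_\R)=\VisPoints V$. Combining these observations, $\pp$ is a real branching point of $\varphi|_W$ if and only if $\pp$ is one of the two endpoints of the unique arc comprising $\VisPoints V$, which is the desired statement. The main obstacle, in my view, is verifying the three hypotheses of \Cref{lem:ram} in the current setting---particularly that $\varphi|_W$ is $2\c 1$ rather than $1\c 1$---while the rest is careful bookkeeping against the preceding lemmas.
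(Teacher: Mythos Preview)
Your proof is correct and follows essentially the same route as the paper: verify that $W$ is irreducible with $|W_\R|=\infty$ via \Cref{lem:Wred} and \RL{Wfin}{b}, check that $\varphi|_W$ is $2{:}1$, apply \RL{ram}{c} to get $\Sing W=\varnothing$, and conclude with \RL{ram}{a}. Your invocation of \Cref{lem:tri} to pin down that $\VisPoints V$ is a \emph{single} arc is a small addition the paper omits here (it establishes that fact separately via \Cref{lem:tri} just before using this lemma in the proof of \Cref{cor:VisType}), and your degree argument for the $2{:}1$ claim is a valid alternative to the paper's more direct ``$V$ is a double circle'' observation.
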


\begin{proof}
It follows from \RL{Wfin}{b} that $|W_\R|=\infty$.
Since $V$ is a double circle and $\VisPoints V=\varphi(W_\R)$ by \Cref{lem:vis},
we deduce that $\varphi|_W$ is 2:1 morphism.
It follows from \Cref{lem:Wred} that $W$ is irreducible.
We apply \RL{ram}{c} and find that $\Sing W=\varnothing$.
The proof is now concluded by \RL{ram}{a}.
\end{proof}

\begin{proof}[Proof of \Cref{cor:VisType}]
We apply \Cref{thm:s} and find that
\[
\SingType\cX\in\{\Di,\,\Dii,\,\Diii,\,\Div,\,\Dv\},
\]
where $X=\cX$ (see \Cref{ntn:X}).
If $\SingType\cX\in\{\Di,\,\Dii\}$, then the proof is concluded by \Cref{lem:D12}.
We make a case distinction on the remaining singular types for~$\cX$
and assume \Cref{ntn:X2}.

First, we suppose that $\SingType\cX=\Diii$.
Let $V\subset\Sing X$ be the double twisted quartic with $[V]=2\,\l_0+2\,\l_1$.
Since $V\cong\P^1$, we find that $V_\R$ is either the empty set or homeomorphic to the symbol~$\circ$.
We apply \Cref{lem:tri} to $V$ and deduce that
\[
\VisType\cX\in\{\Diii[~],\,\Diii[\circ],\,\Diii[=],\,\Diii[-]\}.
\]
Next, we suppose that $\SingType\cX=\Dv$.
Let $V\subset\Sing\cX$ denote the double conic with $[V]\in\{\l_0,\,2\,\l_0\}$.
As before, we apply \Cref{lem:tri} to $V$ and find that
\[
\VisType\cX\in\{\Dv[~],\Dv[\circ]\}.
\]
In the remainder of this proof, we assume that $\SingType\cX=\Div$.
Let $V_1,V_2\subset\Sing\cX$ be the double circles so that $[V_1]=[V_2]=\l_0+\l_1$ and
$V_1\cap V_2=\{\rr\}\subset \cX_\R$.
Suppose that $W_i\subset \P^1\times\P^1$
denotes the 1-dimensional component in the preimage~$\varphi^{-1}(V_i)$ so that $[W_i]=[V_i]$
where $i\in\{1,2\}$.
It follows from \RL{Wfin}{b} that $\VisPoints V_i\neq\varnothing$.
It follows from \Cref{lem:tri}
that $\VisPoints V_i$ is homeomorphic to either the symbol $\circ$ or the symbol $-$.
We make a case distinction.
\begin{Mlist}
\item
If both $\VisPoints V_1$ and $\VisPoints V_2$ are homeomorphic to the symbol $\circ$,
then
\[
\VisType\cX=\Div[\infty].
\]
\item
Suppose that $\VisPoints V_1$ is homeomorphic to the symbol~$-$
and $\VisPoints V_2$ is homeomorphic to the symbol~$\circ$
so that
\[
\VisType\cX\in\{\Div[\alpha],\,\Div[\sigma],\,\Div[-\,\circ]\}.
\]
Since $\rr\in\VisPoints V_2=(V_2)_\R$ it follows that $\rr\in\VisPoints V_1$ and thus
\[
\VisType\cX\neq\Div[-\,\circ].
\]
Notice that if $\VisType\cX=\Div[\sigma]$, then the arc $\VisPoints V_1$ has $\rr$ as an end point.
\item
Finally, suppose that both $\VisPoints V_1$ and $\VisPoints V_2$ are homeomorphic to the symbol~$-$
so that
\[
\VisType\cX\in\{\Div[+],\,\Div[=],\,\Div[-],\,\Div[\lambda]\}.
\]
Now suppose by contradiction that $\VisType\cX=\Div[\lambda]$.
We may assume \Wlog that $\rr$ is an end point of the arc~$\VisPoints V_1$.
It follows from \Cref{lem:circle} that $\rr$ is a branch point of $\varphi|_{W_1}$.
This implies that $|W_1\cap W_2|=1$ and thus $\rr$ must also be a branch point of $\varphi|_{W_2}$.
We arrived at a contradiction with \Cref{lem:circle}
as $\rr$ must also be an end point of the arc $\VisPoints V_2$.
Notice that if $\VisType\cX=\Div[-]$, then the arcs $\VisPoints V_1$ and $\VisPoints V_2$
have a common end point~$\rr$.
\end{Mlist}
We considered all possible singular types for $\cX$
and thus concluded the proof.
\end{proof}

\section{Identifying celestial surfaces}
\label{sec:id}

In this section, we present methods for computing
the singular and visible types of celestial surfaces.
As an application, we verify each row of \Cref{tab:D}
and clarify \Cref{tab:D123,tab:D45}.
Moreover, \Cref{prp:varphi} refines the characterization of the
birational morphism $\varphi$ at \ref{iv} of \Cref{def:st}.

We call a rational map $\P^1\times\P^1\dto\P^n$ \df{biquadratic} if its components
are forms of bidegree~$(2,2)$.
The \df{left images} and \df{right images} of~$\varphi$ are
defined as $\varphi(\{p\}\times\P^1)$ and $\varphi(\P^1\times\{p\})$ for some $p\in\P^1$, \resp.
Left and right images are real unless explicitly stated otherwise.

\begin{proposition}
\label{prp:varphi}
For all biquadratic birational morphisms~$\varphi\c \P^1\times\P^1\to X$
\st $X\subset\S^3$ is a celestial surface of degree~$8$
the following holds:
\begin{Mlist}
\item The left and right images of $\varphi$ are circles.
\item If $C\subset X$ is a circle \st $C\nsubseteq\Sing X$, then $C$ is a left or right image of $\varphi$.
\item
The morphism~$\varphi$ induces a biregular isomorphism
$\P^1\times\P^1\setminus U\cong X\setminus\Sing X$, where $U:=\varphi^{-1}(\Sing X)$.
\item If $\widetilde{\varphi}\c\P^1\times\P^1\to X$ is a biquadratic birational morphism,
then there exists a biregular automorphism~$\alpha\in\aut\P^1\times\P^1$ \st $\varphi=\widetilde{\varphi}\circ\alpha$.
\end{Mlist}
\end{proposition}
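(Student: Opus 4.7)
The plan is to use \Cref{prp:BGE8}, which identifies~$X$ as a degree-$8$ dP surface with $\P^1\times\P^1\cong\bO(X)\cong\bA(X)$, $B(X)=E(X)=\varnothing$, $G(X)=\{\l_0,\l_1\}$, and no isolated singularities. I would first observe that $\P^1\times\P^1$ contains no irreducible curves of negative self-intersection, so any birational morphism from it contracts no curves (by negative-definiteness of the intersection form on the exceptional locus), making $\varphi$ proper and quasi-finite, hence a finite birational morphism; since its source is smooth, $\varphi$ is the normalization of~$X$. Fix conventions so that fibers of the first projection $\pi_1\c\P^1\times\P^1\to\P^1$ have class~$\l_0$ and fibers of~$\pi_2$ have class~$\l_1$.

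For item~1, I would fix $p\in\P^1_\R$ and set $F:=\varphi(\{p\}\times\P^1)$. The equality $-\k\cdot\l_0=2$ shows that $F$ has degree $1$ or $2$ in~$\P^4$, so it is either a line or a conic. The signature $(1,4)$ of~$\S^3$ implies that its maximal totally isotropic real subspaces have dimension one, so $\S^3$ contains no real lines. This excludes $F$ being a real line and also a real reducible conic (necessarily a pair of complex conjugate lines with only one real point, contradicting that $\varphi(\{p\}\times\P^1_\R)\subseteq F$ is infinite). Hence $F$ is an irreducible real conic with real points, i.e., a circle; the same argument handles right images. For item~2, a circle $C\subset X$ with $C\nsubseteq\Sing X$ has class $[C]\in G(X)=\{\l_0,\l_1\}$, so by the bijection~$\Lambda$ in \RL{C}{b} it belongs to the unique pencil of its class, which coincides with the family of left or right images of~$\varphi$; surjectivity of the non-constant morphism $p\mapsto\varphi(\{p\}\times\P^1)$ from~$\P^1$ onto this pencil then identifies~$C$ as such an image.

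For item~3, since $\varphi$ is the normalization of~$X$ and $X$ has no isolated singularities, the non-normal locus of~$X$ coincides with $\Sing X$, so $\varphi$ restricts to a biregular isomorphism $\P^1\times\P^1\setminus\varphi^{-1}(\Sing X)\cong X\setminus\Sing X$. For item~4, I would let $\widetilde{\varphi}\c\P^1\times\P^1\to X$ be a second biquadratic birational morphism. Items~1 and~2 imply that each of $\varphi$ and $\widetilde{\varphi}$ sends the two projection pencils of $\P^1\times\P^1$ onto the two pencils on~$X$ of classes~$\l_0$ and~$\l_1$; after possibly precomposing with the swap $(p,q)\mapsto(q,p)$, we may assume the labelings agree. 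Then there exist $\mu,\nu\in\aut\P^1$ with $\varphi(\{p\}\times\P^1)=\widetilde{\varphi}(\{\mu(p)\}\times\P^1)$ and $\varphi(\P^1\times\{q\})=\widetilde{\varphi}(\P^1\times\{\nu(q)\})$ for all $p,q$. Since $\l_0\cdot\l_1=1$, for general $(p,q)$ the intersection $\varphi(\{p\}\times\P^1)\cap\varphi(\P^1\times\{q\})$ is a single point containing both $\varphi(p,q)$ and $\widetilde{\varphi}(\mu(p),\nu(q))$, so they coincide. Setting $\alpha(p,q):=(\mu(p),\nu(q))\in\aut(\P^1\times\P^1)$, this gives $\varphi=\widetilde{\varphi}\circ\alpha$ on a dense open set, and hence everywhere by separatedness of~$X$.

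The main obstacle will be the exclusion in item~1 of a real fiber $2\c 1$-covering a line, which is resolved by the signature of~$\S^3$ forbidding real lines; item~4 is also delicate in that one must verify that the pair of M\"obius transformations parametrizing the two pencils assembles into an automorphism of~$\P^1\times\P^1$ intertwining $\varphi$ and $\widetilde{\varphi}$ at every point, which crucially uses that members of the two pencils meet in exactly one point since $\l_0\cdot\l_1=1$.
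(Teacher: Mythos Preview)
Your proof is correct and complete, but it takes a genuinely different route from the paper in two places.

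For items~1 and~3, the paper factors $\varphi$ through the anticanonical embedding: the biquadratic forms span a subspace of the $(2,2)$-linear system, so $\varphi=\eta\circ f$ where $f\c\P^1\times\P^1\to\bA(X)\subset\P^8$ is the Veronese--Segre embedding and $\eta$ is a linear projection with $\deg\bA(X)=\deg X$. From this the paper reads off that left/right images of $f$ are conics and that $\eta$ cannot send them to double lines since $\S^3$ contains no lines. You instead argue directly that $\varphi$ contracts no curves (via negative-definiteness on exceptional loci), hence is finite and therefore the normalization of~$X$; the degree of a left image then follows from $-\k\cdot\l_0=2$. Your argument is more self-contained and makes the normalization property explicit, which the paper's terse ``As $\varphi$ is a morphism, it induces an isomorphism\ldots'' leaves implicit.

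For item~4, the paper sets $\alpha:=\widetilde{\varphi}^{-1}\circ\varphi$ and shows it is biregular by invoking the general fact (cited from \cite{2024fact} and \cite{1978}) that a birational map between smooth projective surfaces which is not a morphism must contract a curve. You instead construct $\alpha=(\mu,\nu)$ explicitly, matching the two parametrizations of each pencil via the bijection~$\Lambda$ of \RL{C}{b} and then pinning down the pointwise equality using $\l_0\cdot\l_1=1$. One small imprecision: the intersection $\varphi(\{p\}\times\P^1)\cap\varphi(\P^1\times\{q\})$ need not be a single point globally, only a single point outside $\Sing X$; but since both $\varphi(p,q)$ and $\widetilde{\varphi}(\mu(p),\nu(q))$ lie outside $\Sing X$ for general $(p,q)$, your conclusion stands. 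Your approach is more constructive and avoids the external citation, at the cost of a slightly longer argument; the paper's is quicker but less explicit about what $\alpha$ actually is.
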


\begin{proof}
Let $\cV$ denote the $9$-dimensional vector space of bidegree~$(2,2)$ forms
and let $f\c \P^1\times\P^1\to \P^8$ be a rational map whose components form
a basis for $\cV$.
Since $f$ is associated to the anticanonical class of~$\P^1\times\P^1$,
it follows from \citep[Theorem~8.3.2(iii) and \textsection8.4.1]{2012dol}
that $f$ is a biregular isomorphism onto its anticanonical model $\bA(X):=f(\P^1\times\P^1)$.
By assumption $\varphi$ is a biquadratic and thus there exists a
linear projection $g\c \P^8\dto \P^4$ \st $\varphi=\eta\circ f$, where
$\eta:=g|_{\bA(X)}\c \bA(X)\to X$ is a birational morphism and $\deg \bA(X)=\deg X$.
The left and right images of $f$ are conics and because $\S^3$ does not contain lines
these conics cannot be projected by $\eta$ to double lines in~$X$.
Therefore, the left and right images of $\varphi$ must be circles.
Since $\varphi$ is biquadratic, the preimage~$\varphi^{-1}(C)$ of a circle~$C$ that is not a singular component
must be of bidegree either $(1,0)$ or $(0,1)$ and thus either a left or right image
as asserted.
As $\varphi$ is a morphism, it
induces an isomorphism~$\P^1\times\P^1\setminus U\cong X\setminus\Sing X$.
For the remaining assertion,
we observe that the composition~$\widetilde{\varphi}^{-1}\circ\varphi$
defines a birational automorphism~$\alpha\c\P^1\times\P^1\dto \P^1\times\P^1$.
It follows from \citep[Lemma~16]{2024fact} that $\varphi$, and thus $\alpha$,
does not contract complex curves to complex points.
By \citep[Theorem in \textsection4.2 at page~510]{1978}, birational maps between surfaces
that are not morphisms must contract a Zariksi open set of some complex curve
to a complex point.
It follows that $\alpha$ is a morphism and
the same argument for $\varphi^{-1}\circ\widetilde{\varphi}$ shows that $\alpha^{-1}$ is a morphism.
We conclude that $\alpha$ is a biregular automorphism of~$\P^1\times \P^1$
and $\varphi=\widetilde{\varphi}\circ\alpha$.
\end{proof}

\begin{lemma}
\label{lem:id}
Suppose that $\varphi\c \P^1\times\P^1\to X$ is a biquadratic birational morphism
\st $X\subset\S^3$ is a celestial surface of degree 8
and let $W\subset\P^1\times\P^1$ denote the Zariski closure of
\[
\set{p\in\P^1\times\P^1}{|(\varphi^{-1}\circ\varphi)(p)|\geq 2}.
\]
We consider the following set of bidegrees,
where $W_1,\ldots,W_k$ are the irreducible complex curve components of~$W$:
\[
D:=\set{\deg W_i}{1\leq i\leq k \text{ and } \deg W_i\notin\{(0,1),(1,0)\}}.
\]
If $\deg X\neq 4$ and $\SingType X\notin\{\Di,\Dii\}$,
then either one of the following holds:
\begin{Mlist}
\item $D=\{(2,2)\}$ and $\SingType X=\Diii$,
\item $D=\{(1,1)\}$ and either $\SingType X=\Div$ or $\VisType X\in\{\Diii[\circ],\Diii[~]\}$, or
\item $D=\varnothing$ and $\SingType X=\Dv$.
\end{Mlist}
\end{lemma}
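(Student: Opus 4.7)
The plan is to reduce to a case analysis on $\SingType X\in\{\Diii,\Div,\Dv\}$ (which is exhaustive by \Cref{thm:s}\ref{thm:s:c} once $\{\Di,\Dii\}$ is excluded), and in each case to compute the bidegrees of the irreducible components of $W$ from the classes of the singular components listed at \ref{iv} of \Cref{def:st}.

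First I would note that by \Cref{prp:BGE8} the surface $X$ has no isolated singularities, and by \Cref{prp:varphi} the morphism $\varphi$ restricts to a biregular isomorphism $\P^1\times\P^1\setminus\varphi^{-1}(\Sing X)\cong X\setminus\Sing X$. Hence $W=\varphi^{-1}(\Sing X)$ decomposes as the union of $\varphi^{-1}(V)$ over the irreducible $1$-dimensional singular components $V\subset\Sing X$, and by \Cref{def:class} the curve $\varphi^{-1}(V)$ has bidegree equal to the class $[V]=\alpha\,\l_0+\beta\,\l_1$. The complex double lines in $\Sing X$ have class $\l_0$ or $\l_1$ by \ref{iv} at \Cref{def:st}, contributing only components of excluded bidegree $(1,0)$ or $(0,1)$, so $D$ is determined by the preimages of the double curves represented by the loops in \Cref{tab:s}. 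For any irreducible component $C\subseteq\varphi^{-1}(V)$ that surjects onto $V$, the identity
\[
\deg\varphi(C)\cdot\deg(\varphi|_C)=[C]\cdot(2\,\l_0+2\,\l_1)
\]
holds since $2\,\l_0+2\,\l_1$ is the pullback to $\P^1\times\P^1$ of the hyperplane class of $X$.

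For $\SingType X=\Dv$, the double conic has class $\l_0$ or $2\,\l_0$, so its preimage has bidegree $(1,0)$ or $(2,0)$; bidegree-$(2,0)$ forms factor as products of two linear forms in $x_0,x_1$, so any such preimage splits into two vertical fibres of bidegree $(1,0)$. All components therefore have excluded bidegree and $D=\varnothing$. For $\SingType X=\Div$, each of the two double circles has class $\l_0+\l_1$, so its preimage has bidegree $(1,1)$; a decomposition $(1,1)=(1,0)+(0,1)$ is ruled out since by \Cref{prp:varphi} the image of a vertical or horizontal fibre is a circle of class $\l_0$ or $\l_1$, distinct from a curve of class $\l_0+\l_1$. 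Hence each preimage is irreducible of bidegree $(1,1)$ and $D=\{(1,1)\}$.

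Finally, for $\SingType X=\Diii$ the double twisted quartic $V$ has class $2\,\l_0+2\,\l_1$ and $\deg V=4$, and $\varphi^{-1}(V)$ has bidegree $(2,2)$. Inserting $\deg\varphi(C)=\deg V=4$ in the displayed identity, the only bidegrees $(\alpha,\beta)$ with $\alpha,\beta\ge 0$, componentwise $(\alpha,\beta)\le(2,2)$, and integer map degree are $(1,1)$ and $(2,2)$; splittings such as $(2,1)+(0,1)$ fail because they yield a non-integer map degree, and summands of bidegree $(1,0)$ or $(0,1)$ have images in circles of class $\l_0$ or $\l_1$ rather than in $V$. Thus either $\varphi^{-1}(V)$ is irreducible of bidegree $(2,2)$, giving $D=\{(2,2)\}$, or it splits as two irreducible $(1,1)$-curves, giving $D=\{(1,1)\}$. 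In the latter subcase, \Cref{lem:Wred} yields $\VisPoints V=V_\R$, which for the rational curve $V\cong\P^1$ is either a topological circle or empty, forcing $\VisType X\in\{\Diii[\circ],\Diii[~]\}$. The main obstacle is the bookkeeping of admissible reducibilities in the $\Diii$ case; the combination of the surjection requirement onto the irreducible $V$ with the push-forward degree identity is what eliminates the exotic splittings.
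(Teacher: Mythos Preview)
Your approach is essentially the paper's: reduce to $\SingType X\in\{\Diii,\Div,\Dv\}$ via \Cref{thm:s}, read off the bidegrees of the preimages from the classes listed at~\ref{iv} of \Cref{def:st}, use the push-forward degree identity to constrain reducibility, and invoke \Cref{lem:Wred} for the reducible $\Diii$ subcase. Two minor points where you diverge. First, your equality $W=\varphi^{-1}(\Sing X)$ is not quite right: for the $\Dv$ conic with class $\l_0$ the preimage maps $1{:}1$ onto the conic and therefore lies outside $W$. The paper uses instead the criterion $\varphi^{-1}(C)\subset W$ if and only if $(2\,\l_0+2\,\l_1)\cdot[C]>\deg C$; your overcount is harmless since the stray component has excluded bidegree $(1,0)$. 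Second, your exclusion of the splitting $(1,1)=(1,0)+(0,1)$ in the $\Div$ case is circular: the class of $\varphi(W_1)$ is \emph{defined} as the bidegree of $\varphi^{-1}(\varphi(W_1))$, which would be $(1,1)$ rather than $(1,0)$ in precisely the disputed scenario, so you cannot appeal to it being ``of class $\l_0$''. The paper's proof is equally terse at this step, simply deferring to~\ref{iv} of \Cref{def:st}.
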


\begin{proof}
We know from \Cref{thm:s} that $\SingType X\in\{\Diii,\Div,\Dv\}$.
By \Cref{prp:varphi}, $\varphi$ is unique up to composition
with elements of~$\aut \P^1\times\P^1$.
If $C\subset\Sing X$ is a singular component \st $\deg \varphi^{-1}(C)=(\alpha,\beta)$,
then up to $\aut N(X)$ its class is $[C]=\alpha\,\l_0+\beta\,\l_1$.
Recall from \Cref{prp:BGE8} that $X$ is a dP surface with $2\,\l_0+2\,\l_1$ as class of hyperplane sections.
Hence, $\varphi^{-1}(C)\subset W$ if and only if $(2\,\l_0+2\,\l_1)\cdot [C]>\deg C$.
If $\SingType X=\Diii$ and the preimage $\varphi^{-1}(V)$
of the twisted double quartic~$V\subset\Sing X$ is reducible,
then $\varphi^{-1}(V)$ must consists of two complex curves $W_1$ and $W_2$
\st $\varphi(W_1)=\varphi(W_2)=V$ and $\deg W_1=\deg W_2=(1,1)$.
Moreover, in this case $\VisType X\in\{\Diii[\circ],\Diii[~]\}$ by \Cref{lem:Wred}.
If $\SingType X=\Diii$ and $C\subset\Sing X$ the conic component with class~$2\,\l_0$,
then $\varphi^{-1}(C)$ is the union of two components each of bidegree~$(1,0)$ by the
genus formula (see \citep[Lemma~15(b)]{2024great}).
The proof is now concluded as a direct consequence of \ref{iv} at \Cref{def:st}.
\end{proof}

\begin{remark}
\label{rmk:deg4}
If $X\subset\S^3$ is a Cliffordian celestial surface \st $\deg X\neq 8$,
then it follows from \citep[Theorem~1(b)]{2024darboux} that $\deg X=4$ and
either
\begin{Mlist}
\item  $\Sing X_\R=\varnothing$ and $X$ is $\lambda$-circled for some $4\leq \lambda\leq 5$, or
\item $|\Sing X_\R|=1$ and there exists a M\"obius transformation $\alpha\in\aut\S^3$
\st the Euclidean model $\bU(\alpha(X))$ is a one-sheeted circular hyperboloid.
\END
\end{Mlist}
\end{remark}

\begin{remark}
\label{rmk:cst}
Suppose that $A,B\subset S^3$ are circles and let $X\subset \S^3$ denote the celestial surface
whose spherical model~$\bS(X)\subset S^3$ is equal to $\set{a\star b}{a\in A,~b\in B}$.
We describe an algorithm that computes $\SingType X$ from the circles $A$ and $B$
under the assumption that $A$ and $B$ are defined over $\Q$.
Let $\rho_A\c \C\dto A$ and $\rho_B\c \C\dto B$ be birational maps.
Recall from \cref{rmk:pmz} the specification of
the Hamiltonian product $\_\star\_\c S^3\times S^3\to S^3$
and let $\iota\c \R^4\hookrightarrow \P^4$ send $(y_1,\ldots,y_4)$ to $(1:y_1:\ldots:y_4)$.
The parametrization $\C^2\dto \bS(X)$ that sends $(s,t)$ to $\rho_A(s)\star\rho_B(t)$
corresponds via the embedding $\iota$ to the birational map $f\c \C^2\dto X$ that sends $(s,t)$ to
$(f_0(s,t):\cdots:f_4(s,t))$
where $f_i\in \Q[s,t]$ is for all~$0\leq i\leq 4$ a polynomial defined over~$\Q$.
If the ideal generated by $\{f_i\}_i$ is nontrivial, then $\deg X\neq 8$ and
thus $\SingType X$ is not defined (see \Cref{rmk:deg4}).
In the remainder, we assume that $\deg X=8$ so that $f$ extends to a biquadratic birational morphism~$\P^1\times\P^1\to X$.
We would like to compute the ideal of the algebraic set
\[
U:=\set{(s,t)\in\C^2}{|(f^{-1}\circ f)(s,t)|>1}.
\]
If $\pi\c \C^2\times\C^2\times\C\to \C^2$
is the projection to the first $\C^2$ component, then
\[
\begin{array}{@{}l@{~}l@{~}l@{~}l}
U&=\pi(\{(s,t,u,v,y)\in\C^2\times\C^2\times\C &:& f(s,t)=f(u,v),~ (s,t)\neq (u,v) \})\\
    &=\pi(\{(s,t,u,v,y)\in\C^2\times\C^2\times\C &:&
    f_i(s,t)\cdot f_j(u,v)-f_j(s,t)\cdot f_i(u,v)=0\\
    &&&\text{for all }0\leq i<j\leq 4,~(s\cdot v-t\cdot u)\cdot y=1\}).
\end{array}
\]
Hence, the ideal of $U$ is equal to the elimination ideal~$ I\cap \C[s,t]$, where
$I\subset\C[s,t,u,v,y]$ is the ideal generated by the polynomials
\[
\set{f_i(s,t)\cdot f_j(u,v)-f_j(s,t)\cdot f_i(u,v)}{0\leq i<j\leq 4}\cup\{s\cdot v-t\cdot u)\cdot y-1\}.
\]
We compute the elimination ideal $I\cap \C[s,t]$
by computing a Gr\"obner basis for~$I$.
Suppose that $\gamma\c \C^2\hookrightarrow \P^1\times\P^1$ is the embedding that
sends~$(s,t)$ to $(1:s;1:t)$ and let $W\subset\P^1\times\P^1$ denote the Zariski closure
of $\gamma(U)$.
Since $X$ is Cliffordian,
we know from \Cref{cor:BC} that $\SingType X\in\{\Diii,\Div,\Dv\}$
and thus $W$ is a hypersurface. Hence, the ideal~$I$ is generated by a single polynomial~$p\in\Q[s,t]$.
If $p_1,\ldots,p_k\in\C[s,t]$ denote the irreducible factors of~$p$,
then the bidegree of the form~$p_i\circ\gamma^{-1}$ for $1\leq i\leq k$
corresponds to the bidegree of the corresponding irreducible component of~$W$.
We now recover the possible values for $\SingType X$ from \Cref{lem:id}.
See \cite{2024maple} for a
\href{https://github.com/niels-lubbes/celestial-singularities}{Maple implementation}.
\END
\end{remark}

\begin{remark}
\label{rmk:cvt}
Suppose that $A,B\subset S^3$ are circles and let $X\subset \S^3$ denote the celestial surface
\st $\bS(X)=\set{a\star b}{a\in A,~b\in B}$.
We describe a method for determining $\VisType X$ from the circles $A$ and $B$.
We first compute $\SingType X$ using \Cref{rmk:cst}.
Next, we apply \Cref{cor:VisType}
and reduce the possible
values of $\VisType X$ to finitely many cases.
We now recover $\VisType X$ from observing a rendering
of the visible points of the Euclidean model~$\bU(X)$ in~$\R^3$
together with the parameter lines that are defined by stereographic projections of the circles
$A\cdot\{b\}$
and
$\{a\}\cdot B$
for some $a\in A$ and $b\in B$.
See \Cref{tab:D123,tab:D45} for examples of such renderings.
We refer to the \href{https://github.com/niels-lubbes/celestial-surfaces#experiment-with-cliffordian-surfaces}{software}
at \cite{2024celest}
for observing the surface rendering from different view angles,
refining the net of circular parameter lines, and
perturbing the circles~$A$ and~$B$.
\END
\end{remark}

\begin{remark}
Our goal is to verify each row of \Cref{tab:D}.
Let $X\subset\S^3$ be the celestial surface
defined by the parametric type in the second column of \Cref{tab:D}.
In the first two rows of \Cref{tab:D}, the celestial surface $X$ is Bohemian by construction and thus
$\VisType X$ is either $\Di[=]$ or $\Dii[+]$ by \Cref{cor:BC,cor:VisType}.
For the remaining rows of \Cref{tab:D}
we first use \Cref{rmk:pmz} to recover from the parametric type the circles $A,B\subset S^3$
\st $\bS(X)=\set{a\star b}{a\in A,~b\in B}$.
Next, we compute $\SingType X$ and $\VisType X$ using \Cref{rmk:cst,rmk:cvt}.
We refer to \cite{2024maple,2024celest} for the software implementations.
\END
\end{remark}

\begin{remark}
The topological types of $\Dia$ and $\Diia$ as diagrammatically illustrated in
the third column of \Cref{tab:D123} follow from \Cref{lem:D12}.
Since $\Dva$, $\Dvb$ and $\Dvc$ are by construction great celestial surfaces,
their topological types as illustrated in the third column of \Cref{tab:D45} are confirmed by
\citep[Corollaries~I and~II]{2024great}.
The diagrammatic descriptions of the topological types of the remaining celestial surfaces
illustrated in \Cref{tab:D123,tab:D45} are conjectural.
\END
\end{remark}

\section{Acknowledgements}

Financial support was provided by the Austrian Science Fund (FWF): P33003 and P36689.

\bibliography{sing-8}

\textbf{address:}
Institute for Algebra, Johannes Kepler University, Linz, Austria
\\
\textbf{email:}
info@nielslubbes.com

\end{document}